\newcommand{\one}{\mathds{1}}
\newcommand{\Sbf}{{\bf S}}
\newcommand{\Dbf}{{\bf D}}
\newcommand{\Ec}{{\mathcal E}}
\newcommand{\M}{{\mathcal M}}
\newcommand{\Lcal}{{\mathcal L}}
\newcommand{\Sc}{{\mathcal S}}
\newcommand{\Dc}{{\mathcal D}}
\newcommand{\R}{{\mathbb R}}
\newcommand{\C}{{\mathbb C}}
\newcommand{\N}{{\mathbb N}}
\newcommand{\im}{\operatorname{Im}}
\renewcommand{\im}{\operatorname{Im}}
\theoremstyle{plain}
\newtheorem{theorem}{Theorem}
\newtheorem{corollary}[theorem]{Corollary}
\newtheorem{lemma}[theorem]{Lemma}
\newtheorem{proposition}[theorem]{Proposition}
\newtheorem{remark}[theorem]{Remark}
\newtheorem{definition}[theorem]{Definition}
\theoremstyle{definition}
\numberwithin{theorem}{section}
\numberwithin{equation}{section}
\renewcommand{\Im}{{\rm Im}}
\renewcommand{\Re}{{\rm Re}}
\title[Magnetic resonances for exterior problems]{Counting function of magnetic resonances for exterior problems}
\author[V. Bruneau]{Vincent Bruneau}
\address{Institut de Math\'ematiques de Bordeaux, UMR 5251 du CNRS,
Universit\'e de Bordeaux, 351 cours de la Lib\'eration, 33405 Talence cedex, France}
\email{vbruneau@math.u-bordeaux1.fr}
\author[D. Sambou]{Diomba Sambou}
\address{Departamento de Matem\'aticas, Facultad de Matem\'aticas, Pontificia Universidad
Cat\'olica de Chile, Vicu\~na Mackenna 4860, Santiago de Chile}
\email{disambou@mat.uc.cl}
\keywords{Magnetic Schr\"{o}dinger operator,  Boundary conditions, Counting function of resonances}
\subjclass[2010]{35PXX, 35B34, 81Q10, 35J10, 47F05, 47G30}
\def\phi {\varphi}
\newcommand{\dbdN}{\partial_N^A}
\newcommand{\dbdR}{\partial_\Sigma}
\newcommand{\om}{\Omega}
\newcommand{\Lc}{{\mathcal L}}
\newcommand{\xp}{x_\perp}
\newcommand{\bel}{\begin{equation} \label}
\newcommand{\ee}{\end{equation}}
\def\l{{\Lambda}}
\def\<{\langle}
\def\>{\rangle}
\definecolor{gr}{rgb}   {0.,   0.69,   0.23 }
\definecolor{bl}{rgb}   {0.,   0.5,   1. }
\definecolor{mg}{rgb}   {0.85,  0.,    0.85}
\definecolor{yl}{rgb}   {0.8,  0.7,   0.}
\newcommand{\Bk}{\color{black}}
\newcommand{\Rd}{\color{red}}
\begin{document}

\begin{abstract}
We study the asymptotic distribution of the resonances  near 
the Landau levels $\Lambda_q =(2q+1)b$, $q \in \mathbb{N}$, of 
the Dirichlet (resp. Neumann, resp. Robin) realization in the 
exterior of a compact domain of $\mathbb{R}^3$ of the 3D 
Schr\"{o}dinger Schr\"odinger operator with constant magnetic field of scalar 
intensity $b>0$. We investigate the corresponding resonance 
counting function and obtain the main asymptotic term. 
%giving a precise asymptotic formula of the rate accumulation of the resonances near a given Landau level $\Lambda_q$.
In particular, we prove the accumulation of resonances at the 
Landau levels and the existence of resonance free sectors. In 
some cases, it provides the discreteness of the set of embedded 
eigenvalues near the Landau levels.
%We study the asymptotic distribution of the resonances  near 
%the Landau levels $2bq$, $q \in \mathbb{N}^\ast$, of the Dirichlet 
%(resp. Neumann, resp. Robin) realization in the exterior of a compact
%domain of $\mathbb{R}^3$ of the 3D Schr\"odinger operator with 
%constant magnetic field of scalar intensity $b>0$. We investigate
%the corresponding resonance counting function and obtain the main 
%asymptotic term giving a precise asymptotic formula of the rate
%accumulation of the resonances near a given Landau level $2bq$.
%In particular, we prove the accumulation of resonances at the Landau 
%levels and in some cases the discreteness of the embedded eigenvalues.
%Further, we establish the existence of resonance free sectors near 
%the Landau levels.
\end{abstract}

\maketitle

\section{Introduction}\label{S:intro}

\quad It is now well known that perturbations of magnetic Schr\"odinger operators can generate spectral accumulations near the Landau levels. In the $2D$ case, the free Hamiltonian (the {\it Landau Hamiltonian}) admits pure point spectrum with eigenvalues (the so called {\it Landau levels}) of infinite multiplicity. Its perturbations by an electric potential of definite sign (even if it is compactly supported)  produce concentration of eigenvalues at the Landau levels (see \cite{Ra90_01}, \cite{RaWa02_01}, \cite{MeRo03},   \cite{FiPu06}). 
More recently, similar phenomena are obtained for perturbations by obstacle (see \cite{PuRo07} for the Dirichlet problem, \cite{Pe09} for the Neumann problem and \cite{GoKaPe13} for the Robin boundary condition). Let us  also mention the  work \cite{KloRai09} where is considered potential perturbations which are not of fixed sign.

The study of the $3D$ Schr\"odinger operator is more complicated because the spectrum of the free Hamiltonian is continuous (it is $[b, + \infty)$ where $b>0$ is the strength of the constant magnetic field). For perturbations of such operators, the spectral concentration can be analyse on several way. For example, it is possible to prove that some axisymmetric perturbations can produce an infinite number of embedded eigenvalues near the Landau levels  (see \cite{BoBrRa14_02}). In a more general framework it is stated that the Landau levels are singularities of the Spectral Shift Function  (see \cite{FeRa04_01}) and are accumulation points of resonances (see \cite{BoBrRa07_01}, \cite{BoBrRa14_01}). These results are done for a wide class of potentials of definite sign, but it is also important to consider the cases of obstacle perturbations. For example, magnetic boundary problems appear in the Ginzgurg-Landau theory of superconductors, in the theory of Bose-Einstein condensat
 es, 
%in the modelisation of the Quantum Hall effect 
and in the study of edge states in Quantum Mechanics (see for instance \cite{DBiPul99}, \cite{HoSmi02}, \cite{AfHe09}, \cite{FoHe10},...).

In this paper, we consider the 3D
Schr\"odinger operators with constant magnetic field of strength
$b>0$, pointing at the $x_3$-direction. For the magnetic potential $A=(-b\frac{x_2}2, b\frac{x_1}2, 0)$ it is given by :
\begin{equation} \label{gdr0}
\begin{split}
H_0: = - \Big(  \nabla^A \Big)^2  =  \Big( D_{1} + & \frac{b}{2} x_2 \Big)^2 + \Big( D_{2} - \frac{b}{2} x_1 \Big)^2  + D_{3}^2  , \qquad D_j := -i \frac{\partial}{\partial x_j}, \\
&  \nabla_j^A := \nabla_{x_j}-iA_j.
\end{split}
\end{equation}
Set $\xp := (x_1, x_2) \in \R^2$. Using the
representation $L^2(\R^3) = L^2(\R_{\xp}^2) \otimes
L^2(\R_{x_3})$, $H_0$ admits the decomposition
\begin{equation} \label{gdr10}
H_0 = H_{\rm Landau} \otimes I_3 + I_{\perp} \otimes
\Big( - \frac{\partial^2}{\partial x_{3}^2} \Big)
\end{equation}
with
\begin{equation} \label{gdr4}
 H_{\text{Landau}}: = \Big( D_1 + \frac{b}{2} x_2 \Big)^2 + \Big( D_2 - \frac{b}{2} x_1 \Big)^2 ,
\end{equation}
the Landau Hamiltonian
and $I_3$ and $I_\perp$ being the identity operators in
$L^2(\R_{x_3})$ and $L^2(\R_{\xp}^2)$ respectively. The spectrum of $H_{\text{Landau}}$ consists of the
so-called Landau levels $\Lambda_q=(2q+1)b$,  $q \in \N : = \{0,1,2, \ldots\}$,
and ${\rm dim}\,{\rm Ker}(H_{\text{Landau}} - \Lambda_q) = \infty$. Consequently,
\begin{equation*}
\sigma ( H_0 ) = \sigma_{\rm ac} ( H_0 ) = [ b , + \infty [ ,
\end{equation*}
and the Landau levels play the role of thresholds
in the spectrum of $H_0$. 
%Since the ``transversal'' operator
%$H_{\text{Landau}}$ in \eqref{gdr10} has a purely point spectrum,
%and its eigenvalues form a discrete subset of $\R$ while the
%spectrum of the ``longitudinal'' operator $-
%\frac{\partial^2}{\partial x_3^2}$ is purely absolutely
%continuous, the structure of $H_0$ is quite close to the one of
%the (unperturbed) quantum waveguide Hamiltonians. 
%The study of the resonances for perturbations of such quantum waveguides and their generalizations has a rich history (see e.g. \cite{AsPaVa00_01}, \cite{Ch04_01}, \cite{Ed02_01}, \cite{WuZw00_01}). 

Let us introduce the obstacle perturbation. Let $K\subset \R^3$ be a  compact domain with smooth boundary $\Sigma$ and let $\Omega:= \R^3 \setminus K$. We denote by $\nu$ the unit outward  normal vector of the boundary $\Sigma$ and by $\dbdN:= \nabla^A \cdot \nu$ the magnetic normal derivative. For $\gamma$ a smooth real valued function on $\Gamma$, 
%a self-adjoint pseudo-differential operator on $\Gamma$ of order $0$ (for instance a smooth real valued function on $\Gamma$ SE LIMITER A CE CAS? Pb de non locality......), 
we introduce the following operator on $\Sigma$:
$$\dbdR^{A,\gamma} := \nabla^A \cdot \nu + \gamma. $$
From now, $\gamma$ is fixed and  if it does not lead to confusion, we shall omit the index ${A,\gamma}$ and write $\dbdR$ for $\dbdR^{A,\gamma}$.

%For $\bullet = K$ or $\om$, we define $H_\bullet^\gamma$ (resp. $H_\bullet^\infty$) the Neumann and Robin (resp. Dirichlet) realization of $- \Big(  \nabla^A \Big)^2$ on $\bullet$. More precisely, these operators go 

In the following lines let us define $H_\Omega^\gamma$ (resp. $H_\Omega^\infty$) the Neumann and Robin (resp. Dirichlet) realization of $- \Big(  \nabla^A \Big)^2$ on $\Omega$.

\textbf{Neumann and Robin realizations of $- \Big(  \nabla^A \Big)^2$:}
\\\\
The operator $H_\Omega^\gamma$ is defined by
\begin{equation}\label{dom1}
\begin{cases} 
H_\Omega^\gamma u = - \Big( \nabla^A \Big)^2 u, \quad u \in Dom \big( H_\Omega^\gamma \big), \\ 
Dom \big( H_\Omega^\gamma \big) := \Big\lbrace u \in L^{2}(\Omega) : \Big( \nabla^A \Big)^j u \in L^{2}(\Omega), \hspace*{0.1cm} j = 1, 2 \hspace*{0.1cm} : \hspace*{0.1cm} \dbdR^{A,\gamma} u = 0 \hspace*{0.2cm} \text{on} \hspace*{0.2cm} \Sigma \Big\rbrace. 
\end{cases}
\end{equation}
Actually, $H_\Omega^\gamma$ is the self-adjoint operator associated to the closure of the quadratic form
\begin{equation}\label{q1}
Q_{\Omega}^{\gamma}(u) = \int_{\Omega} \big\vert \nabla^A u \big\vert^{2} dx + \int_{\Sigma} \gamma \vert u \vert^{2} d\sigma, \qquad x := (x_{\perp},x_{3}),
\end{equation}
originally defined in the magnetic Sobolev space $H_{A}^{1}(\Omega) := \big\lbrace u \in L^{2}(\Omega) : \nabla^A u \in L^{2}(\Omega) \big\rbrace$.

The Neumann realization corresponds to $\gamma=0$.

%The corresponding operator $H_K^\gamma$ in $K$ is defined in similar manner. Namely, the self-adjoint operator associated to the closure of the quadratic form $Q_{K}^{-\gamma}$ with respect to \eqref{q1}, where $\gamma$ and $\Omega$ are replaced by $-\gamma$ and $K$ respectively.

\textbf{Dirichlet realization of $- \Big(  \nabla^A \Big)^2$:}
\\\\
The operator $H_\Omega^\infty$ is defined by
\begin{equation}\label{dom2}
\begin{cases} 
H_\Omega^\infty u = - \Big( \nabla^A \Big)^2 u, \quad u \in Dom \big( H_\Omega^\infty \big), \\ 
Dom \big( H_\Omega^\infty \big) := \Big\lbrace u \in L^{2}(\Omega) : \Big( \nabla^A \Big)^j u \in L^{2}(\Omega), \hspace*{0.1cm} j = 1, 2 \hspace*{0.1cm} : \hspace*{0.1cm} u = 0 \hspace*{0.2cm} \text{on} \hspace*{0.2cm} \Sigma \Big\rbrace. 
\end{cases}
\end{equation}
Actually, $H_\Omega^\infty$ is the self-adjoint operator associated to the closure of the quadratic form
\begin{equation}\label{q3}
Q_{\Omega}^{\infty}(u) = \int_{\Omega} \big\vert \nabla^A u \big\vert^{2} dx, \qquad x := (x_{\perp},x_{3}),
\end{equation}
originally defined on $C_{0}^{\infty}(\Omega)$. 

%The corresponding operator in $K$ is defined by similar considerations, namely the self-adjoint operator associated to the closure in $L^{2}(K)$ of the quadratic form $Q_{K}^{\infty}$ with respect to \eqref{q3}, where $\Omega$ is replaced by $K$.

\begin{remark}\label{r1}
%\textbf{(i)} Let us mention that if we identify $L^2(\R^3)$ with  $L^2(K) \oplus L^2(\om)$, then for any $u = u_{K,\gamma} \oplus u_{\Omega,\gamma}$ in the magnetic Sobolev space $H_{A}^{1}(\R^{3}) := \big\lbrace u \in L^{2}(\R^{3}) : \nabla^A u \in L^{2}(\R^{3}) \big\rbrace$,
%\begin{equation}\label{q2}
%Q_{\Omega}^{\gamma}(u_{\Omega,\gamma}) + Q_{K}^{-\gamma}(u_{K,\gamma}) = \int_{\R^{3}} \big\vert \nabla^A u \big\vert^{2} dx, \qquad x := (x_{\perp},x_{3}).
%\end{equation}
%This motivates the choice of $-\gamma$ in the quadratic form on $K$. 

%\textbf{(ii)}
 The magnetic Schr\"odinger operator $H_{0}$ defined by \eqref{gdr0} is the self-adjoint operator associated to the closure of the quadratic form \eqref{q3} with $\Omega=\R^3$.
 %defined by the right side hand of \eqref{q2} originally defined on $C_{0}^{\infty}(\R^{3})$.
\end{remark}

As compactly supported perturbations of the elliptic operator $H_0$, the operators $H_\Omega^\infty$ and  $H_\Omega^\gamma$ are relatively compact perturbations of $H_0$ and we have:

\begin{proposition}
For $l=\infty, \gamma$, the essential spectrum of $H_\Omega^l$ coincide with these of $H_0$:
$$\sigma_{ess}(H_\Omega^\infty)=\sigma_{ess}(H_\Omega^\gamma)=\sigma_{ess}(H_0)=\sigma(H_0)= [b, + \infty).$$
\end{proposition}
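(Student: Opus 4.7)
The equality $\sigma(H_0)=\sigma_{\rm ac}(H_0)=[b,+\infty)$ is already recorded in the introduction, and gives in particular $\sigma_{ess}(H_0)=[b,+\infty)$; the whole content of the Proposition is therefore the invariance of the essential spectrum under the obstacle perturbation. I would prove the two inclusions separately.

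\emph{Inclusion $[b,+\infty)\subset\sigma_{ess}(H_\Omega^l)$ by translated Weyl sequences.} Fix $\lambda\geq b$. Since $H_0$ is translation-invariant along $x_3$, write $\lambda=\Lambda_q+k^2$ for some $q\in\N$ and $k\in\R$ and consider singular sequences of the explicit product form
$$
v_n(x_\perp,x_3) \;=\; \varphi_q(x_\perp)\,e^{ikx_3}\,\chi\!\left(\tfrac{x_3-t_n}{n}\right),
$$
where $\varphi_q$ is a normalised eigenfunction of $H_{\rm Landau}$ at $\Lambda_q$ and $\chi\in C_0^\infty(\R)$ is fixed, $\|\chi\|_2=1$. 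A direct computation (using only $\big(-\partial_{x_3}^2-k^2\big)(e^{ikx_3}\chi_n)=e^{ikx_3}O(n^{-2})$) shows that $\{v_n/\|v_n\|\}$ is a singular sequence for $H_0$ at $\lambda$. Now choose $t_n\to+\infty$ fast enough so that $\supp v_n\subset\om$ and $\dist(\supp v_n,\Sigma)\to\infty$; then $v_n$ belongs to $Dom(H_\Omega^l)$ (for $l=\infty$ or $l=\gamma$) because it vanishes in a neighbourhood of $\Sigma$, the boundary conditions are automatically satisfied, and $H_\Omega^l v_n = H_0 v_n$. Weak convergence to $0$ follows from translation to infinity, and $\|(H_\Omega^l-\lambda)v_n\|/\|v_n\|\to 0$; hence $\lambda\in\sigma_{ess}(H_\Omega^l)$.

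\emph{Inclusion $\sigma_{ess}(H_\Omega^l)\subset[b,+\infty)$ via relatively compact perturbation.} Identify $L^2(\om)$ with the subspace of $L^2(\R^3)$ of functions vanishing on $K$, and let $J:L^2(\om)\to L^2(\R^3)$ be the extension by zero (its adjoint is the restriction to $\om$). For $z\in\C\setminus[b,+\infty)$ large enough negative I would show that
$$
T(z):= J(H_\Omega^l-z)^{-1}J^* - (H_0-z)^{-1}
$$
is compact on $L^2(\R^3)$. The standard argument is a cutoff/commutator one: pick $\chi\in C^\infty(\R^3)$ with $\chi\equiv 1$ outside a large ball containing $K$ and $\chi\equiv 0$ in a neighbourhood of $K$. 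Then $\chi T(z)$ equals (modulo operators supported on the bounded annulus where $\nabla\chi$ is non-zero) zero, because $\chi(H_\Omega^l-z)^{-1}J^*f$ and $\chi(H_0-z)^{-1}f$ both solve $(H_0-z)u=\chi f$ plus a commutator term $[H_0,\chi]\cdot(\text{resolvent})$; the commutator is a first-order differential operator with coefficients supported in a bounded region, and combined with the local $H^2$-regularity of the resolvent images this produces a compact remainder by Rellich's theorem. Finally, $(1-\chi)T(z)$ is localised in a bounded set, and the resolvents map into the magnetic Sobolev space $H^1$ there, so it is compact again by Rellich. By Weyl's theorem on invariance of essential spectrum, $\sigma_{ess}(H_\Omega^l)=\sigma_{ess}(H_0)=[b,+\infty)$.

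\emph{Main obstacle.} The delicate point is the compactness of the resolvent difference in the Neumann/Robin case $l=\gamma$, because the extension by zero of an $H^1(\om)$ function is generally not in $H^1(\R^3)$. The cutoff/commutator argument above is designed precisely to avoid this issue, by moving the cutoff inside the resolvent so that one only ever extends by zero functions which already vanish near $\Sigma$; the boundary term $\int_\Sigma\gamma|u|^2 d\sigma$ from the Robin form \eqref{q1} then plays no role, and the Robin case reduces to the Neumann analysis. The Dirichlet case is much easier since $H^1_0(\om)$ extends continuously to $H^1(\R^3)$, so one can directly compare quadratic forms.
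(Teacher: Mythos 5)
Your plan is essentially sound, but it uses a genuinely different route from the paper. The paper does not prove this proposition in detail: it cites \cite{KaPe13} for the general statement and remarks that it also follows from the resolvent identities of Section~3. There, the key objects are the auxiliary self-adjoint operator $\tilde H^\gamma = H_\Omega^\gamma\oplus H_K^{-\gamma}$ on $L^2(\R^3)$ and the resolvent differences $V^\gamma=H_0^{-1}-(\tilde H^\gamma)^{-1}$, $V^\infty=H_0^{-1}-(H_\Omega^\infty)^{-1}\oplus 0$, whose compactness is established in Lemma~\ref{lemVM} by an explicit integration by parts producing boundary-layer-type formulas such as \eqref{ExpVgamma}. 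Your cutoff/commutator argument is a softer, less explicit alternative; the paper needs the explicit formulas anyway for its main analysis, which is why it chooses that route.

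Two points in your write-up need tightening. First, the appeal to ``Weyl's theorem'' is not literal: $J(H_\Omega^l-z)^{-1}J^*$ is \emph{not} the resolvent of a self-adjoint operator on $L^2(\R^3)$ (it has a nontrivial kernel $L^2(K)$), so the theorem does not apply to it directly. The fix, which is exactly what the paper does, is to introduce an auxiliary operator $H_K$ on $L^2(K)$ with compact resolvent (e.g.\ $H_K^{-\gamma}$, or the Dirichlet realization on $K$) and note that
$$J(H_\Omega^l-z)^{-1}J^* \;=\; \big(H_\Omega^l\oplus H_K - z\big)^{-1} \;-\; J_K(H_K-z)^{-1}J_K^*,$$
where the last term is compact because $K$ is bounded. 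Then your compactness of $T(z)$ gives that $\big(H_\Omega^l\oplus H_K - z\big)^{-1}-(H_0-z)^{-1}$ is compact; now Weyl's theorem applies to the two self-adjoint operators $H_\Omega^l\oplus H_K$ and $H_0$ on the same space, and $\sigma_{ess}(H_\Omega^l\oplus H_K)=\sigma_{ess}(H_\Omega^l)$ since $\sigma_{ess}(H_K)=\emptyset$. Second, a small arithmetic slip: $(-\partial_{x_3}^2-k^2)(e^{ikx_3}\chi_n)=-e^{ikx_3}\big(2ik\chi_n'+\chi_n''\big)$, so the leading contribution is $O(n^{-1})$ pointwise (from $\chi_n'$), not $O(n^{-2})$; after taking $L^2$ norms and dividing by $\|v_n\|\sim\sqrt n$ one still gets $O(n^{-1})\to 0$, so the conclusion is unaffected. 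Finally, note that once the compactness inclusion $\sigma_{ess}(H_\Omega^l)\subset[b,+\infty)$ is proved via Weyl's theorem, it actually gives equality and the Weyl-sequence construction for the other inclusion becomes redundant, though it is a perfectly valid independent argument and a good sanity check.

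Your cutoff argument for compactness of $T(z)$ is correct once spelled out: $\chi T(z)$ factors through $R_0(z)\one_{B'}$ (Hilbert--Schmidt, by the kernel estimate of Lemma~\ref{lemKernel}) composed with a bounded map into $L^2_{\mathrm{comp}}$ coming from the commutator $[(\nabla^A)^2,\chi]$; and for $(1-\chi)T(z)$ one only needs Rellich on the bounded Lipschitz set $B\cap\Omega$ (the zero-extension being an isometry of $L^2$, precompactness in $L^2(\Omega)$ suffices; one never needs $H^1$ across $\Sigma$). This sidesteps the Neumann/Robin trace issue exactly as you describe.
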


This result is proved in a more general context in \cite{KaPe13}. It is also a consequence of some resolvent equations as in Section \ref{s3}. 
%The "black box" framework developped for perturbation of the Laplacian (as {\Rd in Vodev, Sjostrand CITER}) can also be adapted for perturbations of $H_0$. 

In order to define the resonances, let us recall analytic properties of the free resolvent. 
Let $ {\mathcal M}$ be the connected infinite-sheeted covering of $\C\setminus \cup_{q\in \N}\{\Lambda_q\}$ where each function $z \mapsto \sqrt{z-\Lambda_q}$, $q \in \N$ is analytic. Near a Landau level $\Lambda_q$ this Riemann surface $ {\mathcal M}$ can be parametrized by $z_q(k)= \Lambda_q +k^2$, $k \in \C^*$, $|k| \ll 1$ (for more details, see Section 2 of \cite{BoBrRa07_01}). For $\epsilon>0$, we denote by ${\mathcal M}_{\epsilon}$ the set of the points
$z \in {\mathcal M}$ such that for each $q \in \N$, we have $\Im \sqrt{z-\Lambda_q} > -\epsilon$. We have $\cup_{\epsilon>0} {\mathcal M}_{\epsilon} = {\mathcal M}$.

\begin{proposition}{\cite[Proposition 1]{BoBrRa07_01}}\label{propMeroR0}

For each $\epsilon >0$, the operator 
$$R_0(z)=(H_0-z)^{-1} : e^{-\epsilon \langle x_{3} \rangle} L^2 (\R^{3} ) \rightarrow e^{\epsilon \langle x_{3} \rangle} L^2 (\R^{3})$$
 has a holomorphic extension (still denoted by $R_0(z)$) from the open upper half-plane ${\mathbb C}_+
: = \{z \in {\mathbb C}; \ {\rm Im}\,z > 0\}$ to  ${\mathcal M}_{\epsilon}$. 
\end{proposition}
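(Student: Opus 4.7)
The plan is to use the spectral decomposition of $H_0$ furnished by \eqref{gdr10} together with the eigen-expansion $H_{\text{Landau}} = \sum_{q \in \N} \Lambda_q p_q$, where $p_q$ denotes the orthogonal projection onto $\ker(H_{\text{Landau}} - \Lambda_q)$. Since the longitudinal and transverse variables decouple in \eqref{gdr10}, one obtains
\begin{equation*}
R_0(z) = \sum_{q \in \N} p_q \otimes r_q(z), \qquad r_q(z) := \bigl(-\partial_{x_3}^2 + \Lambda_q - z\bigr)^{-1},
\end{equation*}
reducing the continuation of $R_0(z)$ to the analytic continuation of each one-dimensional resolvent $r_q(z)$ on the weighted space $e^{-\epsilon\langle x_3\rangle}L^2(\R_{x_3})\to e^{\epsilon\langle x_3\rangle}L^2(\R_{x_3})$.

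For each $q$, $r_q(z)$ is the integral operator with explicit kernel
\begin{equation*}
r_q(z)(x_3,y_3) = \frac{i}{2\sqrt{z-\Lambda_q}}\, e^{i\sqrt{z-\Lambda_q}\,|x_3-y_3|},
\end{equation*}
the branch of the square root being chosen so that $\im\sqrt{z-\Lambda_q}>0$ for $z\in\C_+$. The next step is to show that the conjugated operator $e^{-\epsilon\langle x_3\rangle}r_q(z)e^{-\epsilon\langle y_3\rangle}$ is Hilbert--Schmidt on $L^2(\R)$ whenever $\im\sqrt{z-\Lambda_q}>-\epsilon$: using the elementary bound $|x_3-y_3|\leq\langle x_3\rangle+\langle y_3\rangle$, the conjugated kernel is dominated by $C|\sqrt{z-\Lambda_q}|^{-1}e^{-\delta(\langle x_3\rangle+\langle y_3\rangle)}$ with $\delta := \epsilon+\im\sqrt{z-\Lambda_q}>0$, which is square-integrable. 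Thus $r_q(z)$ is holomorphic in the variable $k_q := \sqrt{z-\Lambda_q}$ on the half-plane $\{\im k_q>-\epsilon\}\setminus\{0\}$, and via the local uniformizer $z=\Lambda_q+k_q^2$ it lifts to a holomorphic operator-valued map on the portion of $\mathcal M_\epsilon$ around $\Lambda_q$.

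The last step is to assemble these contributions into a single holomorphic function on $\mathcal M_\epsilon$: the mutual orthogonality of the projections $p_q$ reduces matters to checking that the partial sums $\sum_{q\leq N} p_q\otimes r_q(z)$ converge in operator norm uniformly on compact subsets $U\subset\mathcal M_\epsilon$. For $q$ large enough one has $\im\sqrt{z-\Lambda_q}\sim\sqrt{\Lambda_q-\re z}$ uniformly in $z\in U$, so the weighted Hilbert--Schmidt norm of $p_q\otimes r_q(z)$ decays like $\Lambda_q^{-1/2}e^{-c\sqrt{\Lambda_q}}$ for some $c>0$, yielding summability and hence holomorphy of $R_0(z)$ on $\mathcal M_\epsilon$.

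The main obstacle is the branch-cut analysis close to each threshold $\Lambda_q$: the factor $(\sqrt{z-\Lambda_q})^{-1}$ creates a square-root singularity on the physical sheet which only becomes a regular point after passing to the Riemann surface $\mathcal M$ via the parametrization $k_q\mapsto\Lambda_q+k_q^2$. The role of the exponentially decaying weights $e^{\mp\epsilon\langle x_3\rangle}$ is precisely to permit the one-dimensional resolvent to be continued across its branch cut into $\{\im k_q>-\epsilon\}$, and the compatibility of these local continuations on $\mathcal M_\epsilon$ is what furnishes the global holomorphic extension of $R_0(z)$ asserted in the proposition.
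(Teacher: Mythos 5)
Your strategy (spectral decomposition $R_0(z)=\sum_q p_q\otimes r_q(z)$, explicit one-dimensional kernel, continuation across $\{\im\sqrt{z-\Lambda_q}>-\epsilon\}$ after conjugation by the exponential weights) is exactly the standard one, and it is the decomposition that the present paper itself uses repeatedly (e.g.\ in the proof of Lemma \ref{lemVM} and Proposition \ref{c8}); so the route matches the cited reference in spirit. However, the final assembly step contains two genuine slips. First, the ``weighted Hilbert--Schmidt norm of $p_q\otimes r_q(z)$'' is infinite: the weights $e^{\mp\epsilon\langle x_3\rangle}$ only act in the $x_3$-variable, and $p_q$ has infinite rank on $L^2(\R^2_{x_\perp})$, so the conjugated tensor is never Hilbert--Schmidt on $L^2(\R^3)$. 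What controls $\|p_q\otimes r_q(z)\|$ in the weighted operator norm is $\|p_q\|\cdot\|e^{-\epsilon\langle x_3\rangle}r_q(z)e^{-\epsilon\langle x_3\rangle}\|_{L^2(\R)\to L^2(\R)}$, the second factor being bounded, e.g., by its one-dimensional Hilbert--Schmidt norm. Second, the asserted decay $\Lambda_q^{-1/2}e^{-c\sqrt{\Lambda_q}}$ is incorrect: the conjugated kernel equals $|2k_q|^{-1}$ on the diagonal, so the exponential decay of $e^{ik_q|x_3-y_3|}$ off the diagonal for large $q$ does not produce an exponentially small norm; one only gets polynomial decay (Hilbert--Schmidt norm $\sim\Lambda_q^{-3/4}$, or operator norm $\lesssim\Lambda_q^{-1}$ by Schur's test). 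Consequently the ``summability'' you invoke fails. Fortunately summability is not what is needed: because the ranges of $p_q\otimes I_3$ are mutually orthogonal, the series defines a block-diagonal operator whose weighted operator norm is $\sup_q\|e^{-\epsilon\langle x_3\rangle}r_q(z)e^{-\epsilon\langle x_3\rangle}\|$, and the operator-norm convergence of the partial sums $\sum_{q\le N}$, locally uniformly on $\mathcal M_\epsilon$, follows from the single fact that this one-dimensional norm tends to $0$ as $q\to\infty$, uniformly on compact subsets of $\mathcal M_\epsilon$ (which the polynomial decay already gives). With that correction the argument is complete.
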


%\begin{equation}\label{eq2.1}
%(H_0-z)^{-1}= \sum_{j \in \N} p_j \otimes (D_{x_3}^2 + 2bj
%-z)^{-1},
%\end{equation}
%where $p_j$ is the orthogonal projection onto  ${\mathcal H}_j:=
%{\hbox{ker}}(H_{0, \perp}-2bj)$.

%The "black box" framework developped for perturbation of the Laplacian (as {\Rd in Vodev, Sjostrand CITER}) can also be adapted for perturbations of $H_0$.

Since $H_\Omega^\infty$ and  $H_\Omega^\gamma$ are compactly supported perturbations of $H_0$, using some resolvent equations and the analytic Fredholm theorem, from Proposition \ref{propMeroR0}, we deduce  meromorphic extension of the resolvents of $H_\Omega^\infty$ and  $H_\Omega^\gamma$. It can be done following   the "black box" framework developed for perturbation of the Laplacian (as in \cite{Vod92}, \cite{SjoZwo91})  or by introducing auxiliary operators as in Section \ref{s3} (see Corollary \ref{Cor:PoleReso}). Then we are able to define the resonances:

\begin{definition}
For $l=\infty, \gamma$, we define the resonances for $H_\Omega^l$  as the poles of the meromorphic extension of the resolvent 
$$(H_\Omega^l-z)^{-1}:  e^{-\epsilon \langle x_{3} \rangle} L^2 (\Omega) \rightarrow e^{\epsilon \langle x_{3} \rangle} L^2 (\Omega).$$
These poles (i.e. the resonances) and the rank of their residues (the multiplicity of the resonance) do not depend on $\epsilon > 0 $.

\end{definition}

Our goal is to study the distribution of the resonances of $H_\om^\infty$ and $H_\om^\gamma$ near the Landau levels.
We will essentially prove that the distribution of the resonances of $H_\om^\infty$ (resp. $H_\om^\gamma$) near the Landau levels is essentially governed by the distribution of resonances of $H_0+\one_{K}$ (resp. $H_0-\one_{K}$) which is known thanks to %\cite{BoBrRa07_01} and 
\cite{BoBrRa14_01} .

The article is organized as follows. Our main results and their corollaries are formulated and discussed in Section \ref{s2}. In Section \ref{s3}, we show how we can reduce the study of the operators $H_\om^\infty$ and $H_\om^\gamma$ near the Landau levels to some compact perturbations, of fixed sign, of $H_0^{-1}$. By this way, in Subsection \ref{ss3,3}, we bring out the relation between the perturbed operators of $H_0^{-1}$ and the Dirichlet-Neumann and the Neumann-Dirichlet operators. Section \ref{S:proof} is devoted to the proofs of our main results. In Sections \ref{S:sec5} and \ref{S:sec6}, exploiting the fact that the Dirichlet-Neumann and the Neumann-Dirichlet operators are elliptic pseudo differential operators on the boundary $\Sigma$, we show how we can reduce the analyse of the perturbed operators to that of Toeplitz operators with symbol supported near the obstacle. Section \ref{s7} is devoted to the computational proof of the lemma needed to prove that the Diric
 hlet-Neumann and the Neumann-Dirichlet operators are elliptic pseudo differential operators on the boundary $\Sigma$.

\section{Formulation of the main results}\label{s2}

\quad  For $l = \infty, \gamma$, let $H_\om^l$ be the magnetic Schr\"odinger operators defined by \eqref{dom1} and \eqref{dom2} and let us  denote by $\text{Res}\big(H_\om^l\big)$ the corresponding resonances sets. 

%As in previous works (see \cite{BoBrRa07_01}, \cite{BoBrRa14_01}), 
Near a Landau level $\Lambda_q$, $q \in \N$, we parametrize the resonances $z_q$ by $z_q(k) = \Lambda_q + k^2$ with $\vert k \vert <\!\!<1$.

Our main result gives the localization of the resonances of  $H_\om^\infty$ and $H_\om^\gamma$ near the Landau levels $\Lambda_{q}$, $q \in \N$, together with an asymptotic expansion of the resonances counting function in small annulus adjoining  $\Lambda_{q}$, $q \in \N$. As consequences we obtain some informations concerning eigenvalues.

%We will say there exists $\tilde{K}$ very close to $K$ when for any $\varepsilon_0>0$ fixed, there exists $\tilde{K}\subset \R^3$ such that dist$(K,  \tilde{K})\leq \varepsilon_0$.

\begin{theorem}\label{theo1}
Let $K \subset \R^3$ be a  smooth compact domain. Fix a Landau level $\Lambda_{q}$, $q \in \N$, such that $K$ does not produce an isolated resonance at $\Lambda_q$ (see Definition \ref{defnreso}).
%Then there exists $\tilde{K}$ very close to $K$ such that for $\om= \R^3\setminus\tilde{K}$ 
Then the resonances $z_{q}(k) = \Lambda_{q} + k^{2}$ of $H_\om^\infty$ and $H_\om^\gamma$, with $\vert k \vert <\!\!<1$ sufficiently small, satisfy:  

$\textup{\textbf{(i)}}$ For $0 < r_{0} < \sqrt{2b}$ fixed and $l = \infty, \gamma$
\begin{equation*}
\displaystyle \sum_{\substack{z_{q}(k) \hspace{0.5mm} \in \hspace{0.5mm} \textup{Res}(H_\om^l) \\ r < \vert k \vert < r_{0}}} \textup{mult} \big( z_{q}(k) \big) \sim \frac{\vert \ln r \vert}{\ln \vert \ln r \vert} \big( 1 + o(1) \big), \qquad r \searrow 0.
\end{equation*}
%where the multiplicity $\textup{mult} \big( z_{q}(k) \big)$ of a resonance $z_{q}(k)$ is defined by \eqref{}.

$\textup{\textbf{(ii)}}$ For the Dirichlet exterior problem ($l=\infty$), the resonances $z_{q}$ are far from the real axis in the sense that there exists $r_0>0$ such that  $k=\sqrt{z_q-\Lambda_q}$, $|k | < r_0$ satisfies:
\begin{equation*}
\Im(k) \leq 0, \qquad \Re(k) = o(\vert k \vert).
\end{equation*}

$\textup{\textbf{(iii)}}$ For the Neumann-Robin exterior problem ($l=\gamma$), the resonances $z_{q}$ are close to the real axis, below $\Lambda_q$, in the sense that there exists $r_0>0$ such that  $k=\sqrt{z_q-\Lambda_q}$, $|k | < r_0$ satisfies:

\begin{equation*}
\Im(k) \geq 0, \qquad \Re(k) = o(\vert k \vert).
\end{equation*}

%,and the asymptotic of $\textup{Tr} \hspace{0.4mm} \textbf{\textup{1}}_{(r,\infty)} \big( p_{q} \one_{K} p_{q} \big)$ as $r \searrow 0$ being described as follows
%\begin{equation*}
%\textup{Tr} \hspace{0.4mm} \textbf{\textup{1}}_{(r,\infty)} \big( p_{q} \one_{K} p_{q} \big) = 
%\frac{\vert \ln r \vert}{\ln \vert \ln r \vert} \big( 1 + o(1) \big), \qquad r \searrow 0.
%\end{equation*}
\end{theorem}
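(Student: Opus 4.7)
The plan is to reduce the resonance problem for $H_\om^l$ to a spectral problem for a compact effective operator of definite sign on the Landau eigenspace, mirroring the strategy of \cite{BoBrRa14_01} for potential perturbations $H_0 \pm \one_K$. The obstacle boundary condition is encoded via the Dirichlet--Neumann (for $l = \infty$) or Neumann--Dirichlet (for $l = \gamma$) operator on $\Sigma$, whose ellipticity and sign, established in Sections \ref{S:sec5}--\ref{s7}, identify it with an effective signed ``potential'' concentrated near $K$. Concretely, the boundary reduction of Section \ref{s3} (Corollary \ref{Cor:PoleReso}) characterizes the resonances of $H_\om^l$ near $\Lambda_q$ as the values $z \in \mathcal{M}_\epsilon$ at which $I + \mathcal{T}_l(z)$ fails to be invertible, where $\mathcal{T}_l(z)$ is a trace-class, compactly supported operator built by sandwiching the DN (or ND) operator between boundary traces of $R_0(z)$.

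Using the tensor decomposition \eqref{gdr10} of $H_0$ and writing $z = \Lambda_q + k^2$, one isolates the singular part of $R_0(z)$ at $\Lambda_q$ as $\tfrac{i}{2k}\, p_q \otimes e^{ik|x_3 - y_3|}$, where $p_q$ is the orthogonal projection onto $\ker(H_{\text{Landau}} - \Lambda_q)$. Substitution yields a decomposition $\mathcal{T}_l(z) = ik^{-1} T_l + \mathcal{T}_l^{\mathrm{reg}}(z)$, with $T_l$ self-adjoint and compactly supported and $\mathcal{T}_l^{\mathrm{reg}}(z)$ uniformly bounded on the weighted spaces of Proposition \ref{propMeroR0}. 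The pseudo-differential analysis of Sections \ref{S:sec5}--\ref{s7} then identifies $T_l$, up to smoothing remainders, with a Toeplitz operator of the form $p_q f_l p_q$ on the Landau eigenspace, whose compactly supported symbol $f_l$ mirrors $+\one_K$ for $l = \infty$ (positive, via positivity of the DN principal symbol) and $-\one_K$ for $l = \gamma$ (negative, via negativity of the ND principal symbol).

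Parts (ii) and (iii) then follow from the resonance equation, which to leading order reads $1 + ik^{-1}\mu + o(k^{-1}) = 0$ with $\mu$ an eigenvalue of $T_l$: a positive $\mu$ (Dirichlet) forces $k \approx -i\mu$ and hence $\Im k \leq 0$ with $\Re k = o(|k|)$, while a negative $\mu$ (Neumann--Robin) forces $k \approx -i\mu = i|\mu|$ and hence $\Im k \geq 0$ with $\Re k = o(|k|)$. For part (i), the argument principle applied to a regularized Fredholm determinant of $I + \mathcal{T}_l(z)$ on the annulus $\{r < |k| < r_0\}$ equates the resonance count with the number of eigenvalues of $p_q |f_l| p_q$ in the corresponding range; the Raikov--Warzel type asymptotic $n(s, p_q |f_l| p_q) \sim |\ln s|/\ln|\ln s|$ as $s \searrow 0$, valid for compactly supported Toeplitz symbols of fixed sign, then translates directly into the claimed asymptotic after the change of variable relating $s$ to $|k|$.

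The principal obstacle is establishing the sign of $T_l$ in the reduction step. The DN/ND operator is elliptic of definite sign at the principal-symbol level on $\Sigma$, but its composition with boundary traces of $R_0(z)$ a priori destroys pointwise positivity; one must recover the sign on the Landau subspace via the pseudodifferential calculus on $\Sigma$ (Sections \ref{S:sec5}--\ref{S:sec6}) together with the explicit principal-symbol computation of Section \ref{s7}. A secondary technical point is the uniformity of $\mathcal{T}_l^{\mathrm{reg}}(z)$ in a full complex neighborhood of $\Lambda_q$, needed to apply a Jensen-type formula for counting zeros of the determinant in part (i); this is where the hypothesis that $K$ does not produce an isolated resonance at $\Lambda_q$ is used, ensuring that the leading singular term $ik^{-1}T_l$ genuinely dominates.
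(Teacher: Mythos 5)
Your overall strategy --- isolate the $1/k$ singularity of $R_0(\Lambda_q+k^2)$, project onto the Landau eigenspace, reduce to a Toeplitz-type operator with compactly supported symbol, and invoke Raikov--Warzel asymptotics --- is the right one, and is indeed what the paper executes. But there is a genuine gap at precisely the point you flag, and the paper resolves it in a way your outline does not anticipate.

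You propose to deduce the definite sign of the effective operator $T_l$ from the sign of the principal symbol of the Dirichlet--Neumann (resp.\ Neumann/Robin--Dirichlet) operator on $\Sigma$. This does not work directly: the Landau projection $p_q$ does not commute with the boundary operator, and a sign at the principal-symbol level on $\Sigma$ does not propagate to a sign of the compressed operator $p_q W^l p_q$. The paper sidesteps this entirely by a structural device you do not mention: it introduces the auxiliary operator $\Tilde H^\gamma := H_\Omega^\gamma \oplus H_K^{-\gamma}$ on $L^2(\R^3) \cong L^2(\Omega)\oplus L^2(K)$, with the Robin parameter of \emph{opposite} sign on $K$, and sets $V^\gamma := H_0^{-1}-(\Tilde H^\gamma)^{-1}$ (resp.\ $V^\infty := H_0^{-1}-(H_\Omega^\infty)^{-1}\oplus 0$). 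The choice of $-\gamma$ on $K$ makes the quadratic form of $\Tilde H^\gamma$ extend $Q_0$, so $V^\gamma\leq 0$ and $V^\infty\geq 0$ follow \emph{abstractly} from form-domain inclusions (the argument of \cite[Prop.~2.1]{PuRo07}), with no symbol analysis whatsoever. The sign of $T_q^l=\varepsilon(l)\Lambda_q^2 p_q W^l p_q$ is then inherited from the sign of $V^l$. The pseudodifferential machinery on $\Sigma$ (Sections~\ref{S:sec5}--\ref{s7}) is used only later, and only to prove the \emph{two-sided} comparison of $T_q^l$ with Toeplitz operators $p_q\one_{K_i^\perp}p_q$ (Proposition~\ref{propT1}); it is not what establishes the sign, and the paper does not attempt the clean identification $T_l \simeq p_q f_l p_q$ that your outline posits.

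Two secondary points. First, you propose to re-derive the asymptotics of characteristic values by an argument principle / Jensen estimate on a regularized Fredholm determinant; the paper instead invokes the ready-made abstract Proposition~\ref{P:P1} from \cite{BoBrRa14_01}, which packages exactly these estimates (localization near $\R$, sign of $\Re$, and the counting asymptotic) once one knows $A(0)\geq 0$ or $\leq 0$ and $I-A'(0)\Pi_0$ is invertible. Your route is feasible but amounts to reproving that result. Second, your gloss of the hypothesis ``$K$ does not produce an isolated resonance at $\Lambda_q$'' as ``ensuring the singular term dominates'' is vague; in the paper it is the precise condition that $I-\varepsilon(l)(A_q^l)'(0)\Pi_q$ be invertible (see \eqref{cd1} and Proposition~\ref{Prop:interpretation}), which is exactly the standing hypothesis of the abstract characteristic-value result, and the equivalence with the ``no isolated resonance'' formulation is itself a nontrivial lemma (Lemma~\ref{equivA}). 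Finally, note that your boundary Birman--Schwinger operator $\mathcal T_l(z)$ would, in the Robin case, also pick up the eigenvalues of the interior operator $H_K^{-\gamma}$ as spurious poles; the paper handles this explicitly in Corollary~\ref{Cor:PoleReso}, where the poles of $R_\Omega^\gamma$ are those of $\Tilde R^\gamma$ \emph{except} the eigenvalues of $H_K^{-\gamma}$, a finite correction you would need to track.
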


In particular, near the first Landau level $\Lambda_0=b$, using that the only poles $z_{0}(k) = \Lambda_{0} + k^{2}$, with $\Im k >0$, are the eigenvalues below $\Lambda_0$, and the fact that the Dirichlet operator is a non negative perturbation of $H_0$ (see Lemma \ref{lemVM}), we have :

\begin{corollary}\label{theo2}

$\textup{\textbf{(i)}}$ The Robin (resp. Neumann) exterior operator $H_\om^\gamma$ (resp. $H_\om^0$) has an increasing sequence of eigenvalues $\{\mu_j\}_j$ which accumulate at  $\Lambda_0$ with the distribution:
\begin{equation*}
\displaystyle \# \{ \mu_j \in \sigma_p(H_\om^\gamma) \cap (-\infty , \Lambda_0-\lambda) \}
 \sim \frac{\vert \ln \lambda \vert}{2\ln \vert \ln \lambda \vert} \big( 1 + o(1) \big), \qquad \lambda \searrow 0,
\end{equation*}
%where the multiplicity $\textup{mult} \big( z_{q}(k) \big)$ of a resonance $z_{q}(k)$ is defined by \eqref{}.

$\textup{\textbf{(ii)}}$ The Dirichlet exterior operator $H_\om^\infty$ has no eigenvalues below $\Lambda_0$.

\end{corollary}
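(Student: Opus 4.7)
The plan is to deduce \corref{theo2} from \thmref{theo1} combined with a positivity argument for the Dirichlet case and a self-adjointness argument for the Robin/Neumann case; the remaining work is bookkeeping.

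For part (ii), I would argue directly at the level of quadratic forms. By \lemref{lemVM}, the Dirichlet perturbation is nonnegative relative to $H_0$: extending any $u \in \mathrm{Dom}(H_\om^\infty)$ by zero across $\Sigma$ produces $\tilde u \in H^1(\R^3)$ satisfying $Q_\om^\infty(u) = Q_0(\tilde u)$, where $Q_0$ denotes the quadratic form of $H_0$ on the whole space. Since $\inf \sigma(H_0) = b = \Lambda_0$, one obtains $Q_\om^\infty(u) \geq \Lambda_0 \Vert u \Vert^2$, and the min-max principle then gives $H_\om^\infty \geq \Lambda_0$, excluding eigenvalues strictly below $\Lambda_0$.

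For part (i), I would combine \thmref{theo1}(iii) with self-adjointness to identify the resonances of $H_\om^\gamma$ in the upper half $k$-plane with eigenvalues strictly below $\Lambda_0$. Since $\sigma_{\mathrm{ess}}(H_\om^\gamma) = [\Lambda_0, +\infty)$, any spectrum in $(-\infty, \Lambda_0)$ is an isolated eigenvalue of finite multiplicity, and, in the parametrization $z_0(k) = \Lambda_0 + k^2$, such eigenvalues correspond to purely imaginary $k$ with $\Im k > 0$ and $z = \Lambda_0 - (\Im k)^2$. Self-adjointness prevents the meromorphic extension of the weighted resolvent from having complex poles in the physical sheet $\{\Im k > 0\}$, and \thmref{theo1}(iii), which puts every resonance with $|k| < r_0$ in the sector $\{\Im k \geq 0,\ \Re k = o(|k|)\}$, is compatible with (and actually forces) this picture. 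Hence the resonances counted in the upper half $k$-plane are exactly the eigenvalues $\mu_j$ accumulating at $\Lambda_0$ from below.

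It then remains to convert the resonance asymptotic of \thmref{theo1}(i) into the eigenvalue asymptotic of the corollary. Setting $\lambda = \Lambda_0 - z$ and $r = \sqrt{\lambda}$, the annulus $r < |k| < r_0$ corresponds, via $k = i\sqrt{\lambda}$, to the spectral interval $(\Lambda_0 - r_0^2,\, \Lambda_0 - \lambda)$, and the change of variable gives
\[
\frac{|\ln r|}{\ln|\ln r|} = \frac{\tfrac12 |\ln \lambda|}{\ln\bigl(\tfrac12 |\ln \lambda|\bigr)} \sim \frac{|\ln \lambda|}{2 \ln |\ln \lambda|}, \qquad \lambda \searrow 0,
\]
which is precisely the claimed asymptotic; the monotone enumeration $\mu_j \nearrow \Lambda_0$ follows from divergence of the counting function. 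The technically important step that I expect to need the most care is the self-adjointness identification: one must confirm that the meromorphic extension of the weighted resolvent $(H_\om^\gamma - z)^{-1}: e^{-\epsilon\langle x_3\rangle}L^2 \to e^{\epsilon\langle x_3\rangle}L^2$ has no off-axis poles in the physical sheet and that its real poles below $\Lambda_0$ are exactly the $L^2$-eigenvalues with the right multiplicities, so that the resonance count of \thmref{theo1}(i) is literally an eigenvalue count rather than an artifact of the weighting.
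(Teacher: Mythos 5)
Your proof is correct and takes essentially the same route as the paper: the Dirichlet case follows from the zero-extension quadratic form argument that underlies Lemma~\ref{lemVM} (non-negativity of $V^\infty$, hence $H_\om^\infty \geq H_0 \geq \Lambda_0$), and the Robin/Neumann case combines Theorem~\ref{theo1}(i),(iii) with the self-adjointness observation that poles with $\Im k>0$ in the parametrization $z_0(k)=\Lambda_0+k^2$ must lie on the positive imaginary $k$-axis and are exactly the eigenvalues below $\Lambda_0$, followed by the change of variable $r=\sqrt{\lambda}$ giving $\frac{|\ln r|}{\ln|\ln r|}\sim\frac{|\ln\lambda|}{2\ln|\ln\lambda|}$. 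The only cosmetic slip is that the zero extension lands in the magnetic Sobolev space $H^1_A(\R^3)$ rather than $H^1(\R^3)$, which does not affect the argument.
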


Moreover, since the embedded eigenvalues of the operator $H_\om^l$ in $[b, + \infty) \setminus \cup_{q=0}^{\infty} \lbrace \Lambda_{q} \rbrace$ are the resonances $z_{q}(k)$ with $k \in e^{i\lbrace 0,\frac{\pi}{2} \rbrace} ]0,\sqrt{2b}[$, then an immediate consequence of Theorem \ref{theo1} $\textup{\textbf{(ii)}}$ and $\textup{\textbf{(iii)}}$ is the absence of embedded eigenvalues of $H_\om^\infty$ in $]\Lambda_{q} - r_{0}^{2},\Lambda_{q}[ \cup ]\Lambda_{q},\Lambda_{q} + r_{0}^{2}[$ and of embedded eigenvalues of $H_\om^\gamma$ in $ ]\Lambda_{q},\Lambda_{q} + r_{0}^{2}[$, for $r_0$ sufficiently small. Hence we have the following result:

\begin{corollary}
In $[b, + \infty)  \setminus\cup_{q=0}^{\infty} \big\lbrace \Lambda_{q} \big\rbrace$ (resp. in $[b, + \infty) \setminus \cup_{q=1}^{\infty} \big\lbrace ]\Lambda_{q} - r_{0}^{2},\Lambda_{q}[ \big\rbrace$) the  embedded eigenvalues of the operator $H_\om^\infty$  (resp. $H_\om^\gamma$), form a discrete set.
\end{corollary}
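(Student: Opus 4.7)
The plan is to combine two ingredients: the explicit exclusion of embedded eigenvalues from small neighborhoods of the Landau levels, which is already extracted in the paragraph preceding the statement from Theorem~\ref{theo1}(ii)--(iii), and a general local discreteness statement for resonances on the physical sheet away from the thresholds $\{\Lambda_q\}_{q\in \N}$, which follows from the meromorphic extension constructed in Section~\ref{s3}. The corollary then amounts to covering the specified subset of $[b,+\infty)$ by these two types of neighborhoods.

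First, I would make precise the correspondence between embedded eigenvalues and resonances. A point $\lambda \in [b,+\infty)\setminus \cup_{q}\{\Lambda_q\}$ is an embedded eigenvalue of $H_\om^l$ only if it appears as a resonance $z_q(k)=\Lambda_q+k^2$ whose parameter $k$ lies on the real axis (when $\lambda>\Lambda_q$) or on the positive imaginary axis (when $\lambda<\Lambda_q$ but still in $[b,+\infty)$), corresponding to $k\in e^{i\{0,\pi/2\}}\,]0,\sqrt{2b}[$ as noted before the statement. Applied with $l=\infty$, Theorem~\ref{theo1}(ii) gives $\Re(k)=o(|k|)$ and $\Im(k)\le 0$, which excludes both candidate axes in $\{|k|<r_0\}$; hence no embedded eigenvalue of $H_\om^\infty$ lies in $(\Lambda_q-r_0^2,\Lambda_q)\cup(\Lambda_q,\Lambda_q+r_0^2)$. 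Applied with $l=\gamma$, Theorem~\ref{theo1}(iii) still forces $\Re(k)=o(|k|)$, so no embedded eigenvalue of $H_\om^\gamma$ lies in $(\Lambda_q,\Lambda_q+r_0^2)$; the positive imaginary axis is not excluded, which is consistent with the accumulation from below recorded in Corollary~\ref{theo2}.

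Second, I would invoke the meromorphic structure of the extended resolvent: by the analytic Fredholm argument sketched in Section~\ref{s3} (and in Corollary~\ref{Cor:PoleReso}), on the physical sheet of $\mathcal{M}$ restricted to any open set avoiding $\cup_q\{\Lambda_q\}$, the resonances of $H_\om^l$ are isolated zeros of an analytic operator-valued function and therefore form a locally discrete set. Since embedded eigenvalues of $H_\om^l$ in $(b,+\infty)\setminus\cup_q\{\Lambda_q\}$ are in particular resonances on the physical sheet, they too form a locally discrete subset of this open set.

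Combining the two steps covers the sets in the statement: for $H_\om^\infty$, every $\lambda \in [b,+\infty)\setminus\cup_q\{\Lambda_q\}$ has a neighborhood in which embedded eigenvalues are either absent (if $\lambda$ is near some $\Lambda_q$, by the first step) or finite (by the second step), giving discreteness; for $H_\om^\gamma$ the same works once we remove the intervals $(\Lambda_q-r_0^2,\Lambda_q)$ with $q\ge 1$, as the first step only excludes the right side of each $\Lambda_q$. The main subtlety is really the first step: tying the sign of $\Im(k)$ and the asymptotic $\Re(k)=o(|k|)$ from Theorem~\ref{theo1} to the physical-sheet parametrization of embedded eigenvalues, so that the two candidate axes are correctly identified and ruled out. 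Once this dictionary is in place, the rest is a routine application of analytic Fredholm.
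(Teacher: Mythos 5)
Your proposal is correct and follows the same route as the paper: translate embedded eigenvalues into resonances $z_q(k)$ with $k \in e^{i\{0,\pi/2\}}\,]0,\sqrt{2b}[$, use Theorem~\ref{theo1}(ii)--(iii) to rule out such $k$ near $0$ (both sides for Dirichlet, only the right side $\arg k = 0$ for Neumann--Robin), and appeal to the meromorphic extension of Corollary~\ref{Cor:PoleReso} for local discreteness away from the thresholds. The paper leaves the second ingredient implicit, but the logic you spell out is exactly what the authors rely on.
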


\begin{figure}[h]\label{fig 1}
\begin{center}
\tikzstyle{+grisEncadre}=[fill=gray!60]
\tikzstyle{blancEncadre}=[fill=white!100]
\tikzstyle{grisEncadre}=[fill=gray!20]
\begin{tikzpicture}[scale=1]

\begin{scope}
\draw [grisEncadre] (0,0) -- (90:2) arc (90:360:2);
\draw [blancEncadre] (0,0) -- (0:2) arc (0:90:2);
%\draw [blancEncadre] (0,0) circle(0.5);
\draw [+grisEncadre] (0,0) -- (-113:2) arc (-113:-67:2);

\draw [<->] (0,-2.3) arc (-90:-113:2.3);
\draw [<->] (0,-2.3) arc (-90:-67:2.3);

\draw (0,0) -- (-1.05,-2.5);

\draw (0,0) -- (1.05,-2.5);
\draw (0,0) -- (1.05,-2.5);

\draw (0,0) circle(0.5);
\draw [->] [thick] (-2.5,0) -- (2.3,0);
\draw (2.3,0) node[right] {\tiny{$\Re(k)$}};
\draw [->] [thick] (0,-2.5) -- (0,2.7);
\draw (0,2.5) node[right] {\tiny{$\Im(k)$}};
\draw (-0.15,0.02) node[above] {\tiny{$r$}};
\draw (0,0) -- (-0.45,0.17);
\draw (0,0) -- (1.73,1);
\draw (1,0.5) node[above] {\tiny{$r_{0}$}};

\draw (-0.2,-2.1) node[above] {\tiny{$S_{\theta}$}};
\draw (-0.4,-2.35) node[above] {\tiny{$\theta$}};
\draw (0.4,-2.35) node[above] {\tiny{$\theta$}};

\node at (-1.7,-0.4) {\tiny{$\times$}};
\node at (-1.3,-0.2) {\tiny{$\times$}};
\node at (-1.4,-1.2) {\tiny{$\times$}};
\node at (-0.8,-1) {\tiny{$\times$}};
\node at (-0.6,-0.5) {\tiny{$\times$}};

\node at (-1.3,0.2) {\tiny{$\times$}};
\node at (-0.8,1) {\tiny{$\times$}};

\node at (0.9,-1.4) {\tiny{$\times$}};
\node at (1.8,-0.2) {\tiny{$\times$}};
\node at (1.4,-1.2) {\tiny{$\times$}};
\node at (0.7,-0.8) {\tiny{$\times$}};
\node at (0.8,-0.3) {\tiny{$\times$}};

%c?0?0t?? gauche

\node at (-0.02,-0.9) {\tiny{$\times$}};
\node at (-0.02,-0.7) {\tiny{$\times$}};
\node at (-0.02,-0.5) {\tiny{$\times$}};
\node at (-0.02,-0.3) {\tiny{$\times$}};
\node at (-0.02,-0.2) {\tiny{$\times$}};

\node at (-0.12,-0.9) {\tiny{$\times$}};
\node at (-0.12,-0.7) {\tiny{$\times$}};
\node at (-0.12,-0.5) {\tiny{$\times$}};
\node at (-0.12,-0.3) {\tiny{$\times$}};

\node at (-0.22,-0.9) {\tiny{$\times$}};
\node at (-0.22,-0.7) {\tiny{$\times$}};

\node at (-0.32,-0.9) {\tiny{$\times$}};

\node at (-0.5,-1.7) {\tiny{$\times$}};
\node at (-0.3,-1.3) {\tiny{$\times$}};

\node at (-0.2,-1.5) {\tiny{$\times$}};
\node at (-0.1,-1.3) {\tiny{$\times$}};

\node at (-0.2,-1.1) {\tiny{$\times$}};

%c?0?0t?? droit

\node at (0.12,-0.9) {\tiny{$\times$}};
\node at (0.12,-0.7) {\tiny{$\times$}};
\node at (0.12,-0.5) {\tiny{$\times$}};
\node at (0.12,-0.3) {\tiny{$\times$}};

\node at (0.22,-0.9) {\tiny{$\times$}};
\node at (0.22,-0.7) {\tiny{$\times$}};

\node at (0.32,-0.9) {\tiny{$\times$}};

\node at (0.5,-1.7) {\tiny{$\times$}};
\node at (0.3,-1.3) {\tiny{$\times$}};

\node at (0.2,-1.5) {\tiny{$\times$}};
\node at (0.1,-1.3) {\tiny{$\times$}};

\node at (0.2,-1.1) {\tiny{$\times$}};

\node at (2.2,1.8) {\tiny{$\textup{Dirichlet}$}};
\end{scope}

\begin{scope}[xshift=6cm]
\draw [grisEncadre] (0,0) -- (90:2) arc (90:360:2) -- cycle;
\draw [blancEncadre] (0,0) -- (0:2) arc (0:90:2) -- cycle;
%\draw [blancEncadre] (0,0) circle(0.5);
\draw [+grisEncadre] (0,0) -- (90:2) arc (90:113:2) -- cycle;
\draw (0,0) -- (90:2) arc (90:113:2) -- cycle;

\draw [<->] (0,2.3) arc (90:113:2.3);

\draw (0,0) circle(0.5);
\draw [->] [thick] (-2.5,0) -- (2.3,0);
\draw (2.3,0) node[right] {\tiny{$\Re(k)$}};
\draw [->] [thick] (0,-2.5) -- (0,2.7);
\draw (0,2.7) node[right] {\tiny{$\Im(k)$}};
\draw (0.26,-0.40) node[above] {\tiny{$r$}};
\draw (0,0) -- (0.24,-0.45);
\draw (0,0) -- (1.73,1);
\draw (0,0) -- (-1.05,2.5);
\draw (1,0.5) node[above] {\tiny{$r_{0}$}};
\draw (-0.4,1.88) node[above] {\tiny{$\theta$}};

\node at (-1.4,-1.2) {\tiny{$\times$}};
\node at (-0.6,-0.5) {\tiny{$\times$}};

\node at (0.9,-1.4) {\tiny{$\times$}};
\node at (1.8,-0.2) {\tiny{$\times$}};
%\node at (1.4,-1.2) {\tiny{$\times$}};

\node at (-0.02,0.9) {\tiny{$\times$}};
\node at (-0.02,0.7) {\tiny{$\times$}};
\node at (-0.02,0.5) {\tiny{$\times$}};
\node at (-0.02,0.3) {\tiny{$\times$}};
\node at (-0.02,0.2) {\tiny{$\times$}};

\node at (-0.12,0.9) {\tiny{$\times$}};
\node at (-0.12,0.7) {\tiny{$\times$}};
\node at (-0.12,0.5) {\tiny{$\times$}};
\node at (-0.12,0.3) {\tiny{$\times$}};

\node at (-0.22,0.9) {\tiny{$\times$}};
\node at (-0.22,0.7) {\tiny{$\times$}};

\node at (-0.32,0.9) {\tiny{$\times$}};

\node at (-1.7,0.4) {\tiny{$\times$}};
\node at (-1.3,0.2) {\tiny{$\times$}};
\node at (-1.4,1.2) {\tiny{$\times$}};
\node at (-0.8,1) {\tiny{$\times$}};
\node at (-0.6,0.5) {\tiny{$\times$}};

\node at (-0.5,1.7) {\tiny{$\times$}};
\node at (-0.3,1.3) {\tiny{$\times$}};

\node at (-0.2,1.7) {\tiny{$\times$}};
\node at (-0.1,1.3) {\tiny{$\times$}};

\node at (-0.2,1.1) {\tiny{$\times$}};

\node at (2.9,1.8) {\tiny{$\textup{Neumann-Robin}$}};
%\node at (2.3,1.6) {\tiny{$\textup{case}$}};
\draw (-0.3,1.5) node {\tiny{$S_{\theta}$}};
\end{scope}
\end{tikzpicture}
\caption{\textup{\textbf{Localisation of the resonances in variable $k$:} For $r_{0}$ sufficiently small, the resonances $z_{q}(k) = \Lambda_{q} + k^{2}$ of the operators $H_{\Omega}^{l}$, $l = \infty, \gamma$ near a Landau level $\Lambda_{q}$, $q \in \N$, are concentrated in the sectors $S_{\theta}$. For $l = \infty$ they are concentrated near the semi-axis $-i]0,+\infty)$ in both sides, while they are concentrated near the semi-axis $ i]0,+\infty)$ on the left for $l = \gamma$.}}
\end{center}
\end{figure}

To our best knowledge, the above results are new even concerning the discret spectrum. 
%As in the 2D case, the main term in the asymptotic behavior does not depend on $K$.
%\Rd 
However, they are not surprising. Similar results hold for perturbations by potentials (see \cite{Ra90_01} for eigenvalues, et \cite{BoBrRa07_01}, \cite{BoBrRa14_01} for resonances) and for exterior problems in the 2D case concerning accumulation of eigenvalues at the Landau levels  (see \cite{PuRo07}, \cite{Pe09}, \cite{GoKaPe13}). In comparison with previous works, the spectral study of obstacle perturbations in the 3D case leads to  two new difficulties. The first, with respect to the 2D case, comes from the presence of continuous spectrum, then the spectral study involves resonances and some non-selfadjoint aspects. The second difficulty, with respect to the potential perturbations, is due to the fact that the perturbed and the unperturbed operators are not defined on the same space. In order to overcome this difficulty, we introduce an appropriate perturbation $V^{l}$, $l = \infty, \gamma$, of $H_0^{-1}$  on $L^2(\R^3)$ in such a way that the concentration of resonances 
 of $
 H_{\Omega}^{l}$, $l = \infty, \gamma$ at $\Lambda_q$ is reduced to the accumulation of "{\it Birman-Schwinger singularities}" of $H_0^{-1}-V^{l}$ at $\frac1{\Lambda_q}$ in the sense that $\frac1{z}$ is a "{\it Birman-Schwinger singularity}" of $H_0^{-1}-V^{l}$ if %(modulo a sign) 
 $1$ is an eigenvalue of 
$$
B^l(z):= \hbox{sign} (V^l) \, |V^l |^\frac12 \,   \Big( H_0^{-1} -  \frac1{z}  \Big)^{-1}  \, |V^l |^\frac12$$
$$=
\hbox{sign} (V^l) \, |V^l |^\frac12 \, z H_0 (H_0-z)^{-1} |V^l |^\frac12 = z V^l + z^2 \hbox{sign} (V^l) \, |V^l |^\frac12 \, (H_0-z)^{-1} |V^l |^\frac12
$$
(see  Section \ref{s3} and in particular Proposition \ref{p1}). Then the main tool of our proof is an abstract result of \cite{BoBrRa14_01} (see Section \ref{S:proof} and especially Proposition \ref{P:P1}).
\Bk 
%{\Rd A COMPLETER
%
%IDEA: resonances de l'inverse
%
%$H_0 \pm \bf{1}_K$ comme 2D pseudo sur le bord
%
%
%
%Resultat attendu aux vu de la 2D (meme dim paire)
%
%Difficultes technique du 3D $TT^*$
%}
%$$ $$
%
%$$ $$
%
%$$ $$
%
%$$ $$
%
%$$ $$
%
%$$ $$
%
%
%

\section{Magnetic resonances for the exterior problems}\label{s3}

\quad In this section we reduce the study of the operators $H_\om^\infty$ and $H_\om^\gamma$ near the Landau levels to some compact perturbations, of fixed sign, of $H_0^{-1}$. We follow ideas developped in \cite{PuRo07} and \cite{GoKaPe13} for the eigenvalues of the $2D$ Schr\"odinger operators and  give a charaterisation of the resonances which will allow to apply (in Section \ref{S:proof}) a general result of \cite{BoBrRa14_01}.

\subsection{Auxiliary operators}\label{ss:aux}

\quad By identification of $L^2(\R^3)$ with  $L^2(\om)\oplus L^2(K)$, we consider  the following operator in  $L^2(\R^3)$:
\begin{equation}\label{defhtilde}
{\Tilde H}^\gamma:= H_\om^\gamma  \oplus H_K^{-\gamma}   \qquad \hbox{on} \qquad    Dom(H_\Omega^\gamma) \oplus Dom(H_K^{-\gamma}),
\end{equation}
where $H_K^{-\gamma}$ is the Robin operator in $K$. Namely, $H_K^{-\gamma}$ is the self-adjoint operator associated to the closure of the quadratic form $Q_{K}^{-\gamma}$ defined by \eqref{q1}, by replacing $\gamma$ and $\Omega$ with $-\gamma$ and $K$ respectively.

Without loss of generality, we can assume that $H_\om^\gamma$, $H_K^{-\gamma}$ and $H_\om^\infty$ are positive and invertible (if not it is sufficient to shift their by the same constant), and  we introduce 
\bel{defVr}
V^\gamma:= H_0^{-1} - ({\Tilde H}^\gamma)^{-1}= H_0^{-1} - (H_\om^\gamma )^{-1} \oplus (H_K^{- \gamma} )^{-1}.
\ee
\bel{defVd}
V^\infty:= H_0^{-1} - (H_\om^\infty )^{-1} \oplus 0.
\ee

On one hand, thanks to the choice of the boundary condition in $K$ (with $-\gamma$), the quadratic form associated to ${\Tilde H}^\gamma$ is given by 
\begin{equation}\label{q2}
{\Tilde Q}^{\gamma}(u_{\Omega},u_{K}) =Q_{\Omega}^{\gamma}(u_{\Omega}) + Q_{K}^{-\gamma}(u_{K}) = \int_{\R^{3}} \big\vert \nabla^A u \big\vert^{2} dx.
%, \qquad x := (x_{\perp},x_{3}).
\end{equation}
Thus, $d({\Tilde Q}^{\gamma})$, the domain of the quadratic form associated to ${\Tilde H}^\gamma$ contains $H^1_A(\R^3)$ the domain of $Q_{0}$, the quadratic form associated to $H_0$:
$$ Q_{0}(u)=\int_{\R^{3}} \big\vert \nabla^A u \big\vert^{2} dx,$$
and ${\Tilde Q}^{\gamma}$ coincide with $Q_{0}$ on $H^1_A(\R^3)$.

On the other hand by extending by $0$ the functions of the domain of the quadratic form $Q_{\Omega}^{\infty}$ (see \eqref{q3}) we can embed $d( Q_{\Omega}^{\infty})$ in $H^1_A(\R^3)$ with $Q_{\Omega}^{\infty}$ coinciding with $Q_{0}$ on $d( Q_{\Omega}^{\infty})$.

From the previous properties, according to Proposition 2.1 of \cite{PuRo07}, we deduce that  $V^\gamma$ (defined by \eqref{defVr}) is a non positive operator and   $V^\infty$ (defined by \eqref{defVd}) is non negative in $\Lc(L^2(\R^3))$.

\subsection{Decreasing  and compact properties of the perturbations of fixed sign}

\begin{lemma}\label{lemVM}
The operators $V^\infty$ and $V^\gamma$ defined by (\ref{defVd}) and (\ref{defVr}) are respectively non negative and non positive compact operators in $\Lc(L^2(\R^3))$. Moreover, there exists  $M^\infty$ and $M^\gamma$ compact operators  in $\Lc(L^2(\R^3))$ such that 
\bel{decompositionV}
V^\infty= M^\infty M_\infty, \qquad V^\gamma= - M^\gamma M_\gamma; \qquad M_l:= (M^l)^*, l=\gamma, \infty,
\ee
with $M_l$, $l = \gamma, \infty$ bounded from $e^{\epsilon \langle x_3 \rangle} L^2(\R^3)$ into $L^2(\R^3)$, $0< \epsilon < \sqrt{b}$.

\end{lemma}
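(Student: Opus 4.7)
The sign properties of $V^\infty$ and $V^\gamma$ have already been obtained immediately before the lemma via Proposition~2.1 of \cite{PuRo07} and the form relation (\ref{q2}). It thus remains to prove compactness of each $V^l$ on $L^2(\R^3)$, to exhibit the factorization, and to establish the weighted bound on $M_l$.

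The natural choice is $M^l := |V^l|^{1/2}$ (positive self-adjoint square root) and $M_l := (M^l)^* = M^l$; then $V^l = \sigma_l (M^l)^2 = \sigma_l M^l M_l$ with $\sigma_l := \sign(V^l)$, and compactness of $M^l$ will follow from that of $V^l$ by functional calculus. The boundedness of $M_l : e^{\epsilon \langle x_3 \rangle} L^2(\R^3) \to L^2(\R^3)$ is equivalent, by duality, to the boundedness on $L^2(\R^3)$ of $e^{\epsilon \langle x_3 \rangle} M^l$, and the identity
$$
\bigl\| e^{\epsilon \langle x_3 \rangle} M^l \bigr\|^2 = \bigl\| e^{\epsilon \langle x_3 \rangle} M^l (M^l)^* e^{\epsilon \langle x_3 \rangle}\bigr\| = \bigl\| e^{\epsilon \langle x_3 \rangle} |V^l| e^{\epsilon \langle x_3 \rangle}\bigr\|
$$
reduces everything to showing that $e^{\epsilon \langle x_3 \rangle} V^l e^{\epsilon \langle x_3 \rangle}$ is bounded (in fact Hilbert--Schmidt) on $L^2(\R^3)$ for every $0 < \epsilon < \sqrt{b}$. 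Since $e^{-\epsilon \langle x_3 \rangle}$ is a bounded multiplication operator, the compactness of $V^l$ will then follow from writing $V^l = e^{-\epsilon \langle x_3 \rangle} \bigl[e^{\epsilon \langle x_3 \rangle} V^l e^{\epsilon \langle x_3 \rangle}\bigr] e^{-\epsilon \langle x_3 \rangle}$.

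To establish the kernel bound, the plan is to analyze
$$
K_{V^\infty}(x,y) = G_0(x,y) - \one_\om(x)\one_\om(y) G_\om^\infty(x,y),
$$
where $G_0, G_\om^\infty$ are the Green functions of $H_0, H_\om^\infty$, and to split $\R^3 \times \R^3$ into the pieces supported in $K \times \R^3 \cup \R^3 \times K$ and in $\om \times \om$. On the former, one variable lies in the compact set $K$, so only a one-sided decay is needed; this is provided by the Landau-level decomposition $H_0^{-1} = \sum_q P_q \otimes (\Lambda_q + D_3^2)^{-1}$, whose $x_3$-kernel is $(2\sqrt{\Lambda_q})^{-1} e^{-\sqrt{\Lambda_q}|x_3 - y_3|}$ (decay rate $\geq \sqrt{b}$), together with the Gaussian decay of $P_q(x_\perp, y_\perp)$; this absorbs the weight $e^{\epsilon(\langle x_3 \rangle + \langle y_3 \rangle)}$ for any $\epsilon < \sqrt{b}$. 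On $\om \times \om$ I will invoke the boundary-integral representation
$$
G_0(x,y) - G_\om^\infty(x,y) = \int_\Sigma P_\om(x,\sigma)\, G_0(\sigma,y)\, d\sigma,
$$
with $P_\om$ the Poisson operator of the Dirichlet realization of $H_0$ in $\om$ (equivalently, the operator sending a boundary datum to the unique $L^2(\om)$ solution of $H_0 w = 0$ with that datum). Since $\Sigma$ is compact, both $G_0(x,\sigma)$ and $G_0(\sigma,y)$ decay (uniformly in $\sigma \in \Sigma$) exponentially in $\langle x_3 \rangle$ and $\langle y_3 \rangle$ respectively at rate $\sqrt{b}$, so after multiplication by the weights the kernel is in $L^2(\R^3 \times \R^3)$. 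The Robin case is analogous, working with $\tilde H^\gamma = H_\om^\gamma \oplus H_K^{-\gamma}$ (which has the same quadratic form as $H_0$ on $H_A^1(\R^3)$, by (\ref{q2})) and the corresponding Robin Poisson operator.

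The main obstacle is the rigorous justification of the boundary-integral representation --- essentially Green's formula for $-(\nabla^A)^2$ on $\om$ and on $K$, which requires control of boundary traces up to $\Sigma$ --- together with the uniform-in-$\sigma \in \Sigma$ decay at rate $\sqrt{b}$ of $G_0(\sigma,\cdot)$ and of the Poisson kernels. The rate $\sqrt{b}$ is sharp since $b = \inf \sigma(H_0)$; any $\epsilon < \sqrt{b}$ is sufficient, and the underlying Combes--Thomas / Agmon estimates are standard once elliptic regularity near $\Sigma$ is in hand.
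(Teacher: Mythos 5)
Your route is genuinely different from the paper's and it is, in outline, sound, but it relocates the difficulty rather than eliminating it. You go after the Schwartz kernel of $V^l$ directly, splitting $G_0 - \one_\om G_\om^\infty \one_\om$ into a piece supported where one variable is in $K$ (handled by the explicit $e^{-\sqrt{\Lambda_q}|x_3-y_3'|}$ decay of the free Green function) and a piece on $\om\times\om$ handled via the Poisson representation $G_0 - G_\om^\infty = \int_\Sigma P_\om(\cdot,\sigma)G_0(\sigma,\cdot)\,d\sigma$, and you aim for the stronger Hilbert--Schmidt conclusion. This is plausible: the local $1/|x-y|$ singularity is square-integrable in $\R^3\times\R^3$, and the boundary data $G_0(\cdot,y)|_\Sigma$ does decay at rate $\sqrt{b}$ in $\langle y_3\rangle$. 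But the decay-at-rate-$\sqrt{b}$ of the Poisson extension itself (equivalently, an Agmon/Combes--Thomas estimate at energy $0 < b = \inf\sigma(H_\om^\infty)$ for the \emph{exterior} Dirichlet and Robin problems), which you flag as the ``main obstacle'', is precisely what carries the weight of the argument, and you do not carry it out. The paper sidesteps exactly this: after writing $\langle f, V^l g\rangle$ as a $K$-volume term plus boundary pairings on $\Sigma$ by integration by parts, it observes that these expressions only see $v=H_0^{-1}f$ near $K$, whence the algebraic identity $V^l = H_0^{-1}\chi_3 H_0\,V^l\,H_0\chi_3 H_0^{-1}$ for any cutoff $\chi_3=1$ near $I_K$; the weighted bound then falls out of the boundedness of $H_0\chi_3 H_0^{-1}=\chi_3+[D_{x_3}^2,\chi_3]H_0^{-1}$ on $e^{\epsilon\langle x_3\rangle}L^2$, read off from the explicit kernel of $(D_{x_3}^2+\Lambda_q)^{-1}$ — so only free-resolvent decay is ever invoked, never decay for the exterior operator. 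Compactness likewise comes ``for free'' there from the boundary-trace representation and compact Sobolev embeddings, without Green-kernel estimates. In short: your plan would work if you supplied the exterior Agmon estimate and a careful Green's-identity justification, but the paper's cutoff identity buys the same conclusion with strictly less analytic input.
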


\begin{proof}
Since $V^\infty$ and $(- V^\gamma)$ are  non negative bounded operators on $L^2(\R^3)$ (see Subsection \ref{ss:aux} above) there exists bounded operators $M_l$, $l=\gamma, \infty$ such that 
$V^\infty = (M_\infty)^* \, M_\infty$, and $V^\gamma = - (M_\gamma)^* \, M_\gamma$. For example, we can take $M_l=(M_l)^*=|V_l|^{\frac12}$, but sometimes other choices could be more convenient (see remark \ref{rem:Mlchoice}).

For all $f, g \in L^2(\R^3)$, let us introduce $v=H_0^{-1} f$, $u_{\Omega,l}=(H_\Omega^l)^{-1}(g_{|\Omega})$ and $u_{K}=(H_K^{-\gamma})^{-1}(g_{| K})$. 
By definition of $V^l$, $ l=\gamma, \infty$, we have:
$$\langle f , V^\infty g \rangle_{L^2(\R^3)}= \langle H_0 v, \, \big(H_0^{-1} - (H_\om^\infty )^{-1} \oplus 0 \big) \big( H_\om^\infty u_{\Omega,\infty} \oplus  g_{| K}  \big) \rangle$$
$$= \langle v, \,  H_\om^\infty u_{\Omega,\infty} \oplus g_{| K} \rangle  - \langle H_0 v, \,    u_{\Omega,\infty } \oplus 0 \rangle$$
$$= -  \int_\om v \, \overline{(\nabla^A)^2 u_{\om,\infty}} dx +  \int_\om (\nabla^A)^2 v \, \overline{u_{\om,\infty}} dx + \int_K v \, \overline{g_{| K}} dx,$$
and
$$\langle f , V^\gamma g \rangle_{L^2(\R^3)} = \langle H_0 v, \, \big(H_0^{-1} - ({\Tilde H}^\gamma)^{-1}\big) \big(H_\om^\gamma u_{\Omega,\gamma} \oplus H_K^{-\gamma} u_{K}  \big) \rangle$$
$$= \langle v, \,  \big(H_\om^\gamma u_{\Omega,\gamma} \oplus H_K^{-\gamma} u_{K}  \big) \rangle  - \langle H_0 v, \,    u_{\Omega,\gamma} \oplus  u_{K} \rangle$$
$$=  -  \int_\om v \, \overline{(\nabla^A)^2 u_{\om,\gamma}} dx  +  \int_\om (\nabla^A)^2 v \, \overline{u_{\om,\gamma}} dx 
-  \int_K v \, \overline{(\nabla^A)^2 u_{K}} dx  +  \int_K (\nabla^A)^2 v \, \overline{u_{K}} dx. $$

Then by integration by parts, from the boundary conditions $u_{\om,\infty \vert \Sigma}= 0 $ and $\dbdR u_{K}=0 = \dbdR u_{\Omega,\gamma}$, we deduce the equalities
\bel{ExpVinfty}
\langle f , V^\infty g \rangle_{L^2(\R^3)}  = \int_K v_{| K} \, \overline{g_{| K}} dx + \int_\Sigma \Gamma_0(v) \,  \overline{\dbdN u_{\om,\infty}}d \sigma,
%&=\langle H_0^{-1} f , g_{| K} \rangle_{L^2(K)} - \langle H_0^{-1} f , \dbdN (H_\Omega^\infty)^{-1}(g_{|\Omega}) \rangle_{L^2(\Sigma)}
\ee
\bel{ExpVgamma}
\langle f , V^\gamma g \rangle_{L^2(\R^3)}  = - \int_\Sigma \dbdR v \,\overline{\Gamma_0 \big(  u_{\om,\gamma}\big)- \Gamma_0 \big(u_{K}\big)}d \sigma,
%& =  \langle H_0^{-1} f ,  (H_\Omega^\gamma)^{-1}(g_{|\Omega}) -  (H_K^{-\gamma})^{-1}(g_{|K})\rangle_{L^2(\Sigma)}
\ee
where  $\Gamma_0: H^{s}(\bullet) \longrightarrow H^{s-\frac12}(\Sigma)$, for $\bullet = \om$, $K$,  and $s\geq \frac12$, is the trace operator on $\Sigma$. 
In the notation of this operator, we omit the dependence on $K$ or $\om$ because either it is indicate on the functions on which it is applied, or the functions are smooth near $\Sigma$.
In particular, due to the regularity properties of $v=H_0^{-1} f$ near $\Sigma$, the functions $\Gamma_0(v)$ and $ \dbdR v $ are well defined. 

In other words, we have
\bel{ExpVinftyb}
\langle f , V^\infty g \rangle_{L^2(\R^3)} 
=\langle  (H_0^{-1} f)_{|K} , g_{| K} \rangle_{L^2(K)} + \langle \Gamma_0(H_0^{-1} f )\, , \, \dbdN (H_\Omega^\infty)^{-1}(g_{|\Omega}) \rangle_{L^2(\Sigma)},
\ee

\bel{ExpVgammab}
\langle f , V^\gamma g \rangle_{L^2(\R^3)}  = - \langle \dbdR (H_0^{-1} f ), \Gamma_0  (H_\Omega^\gamma)^{-1}(g_{|\Omega}) -  \Gamma_0 (H_K^{-\gamma})^{-1}(g_{|K}) \rangle_{L^2(\Sigma)}.
\ee 
Exploiting that the domains of the operators contain $H^1_{loc}$ and the compacity of the domains $K$ and $\Sigma$, we deduce that  $V^\infty$ and $V^\gamma$ are compact operators in $L^2(\R^3)$.

At last, in order to prove that $M_\infty$ and  $M_\gamma$ are bounded from $e^{\epsilon \langle x_3 \rangle} L^2(\R^3)$ into $L^2(\R^3)$, let us prove that for $l=\gamma, \infty$, $e^{\epsilon \langle x_3 \rangle} V^l e^{\epsilon \langle x_3 \rangle} $ is bounded in $L^2(\R^3)$.

Clearly, in the relations \eqref{ExpVinfty} and \eqref{ExpVgamma}, $v$ can be replaced by $\chi_3 v$ for any $\chi_3 \in C^\infty_c(\R_{x_3})$ equals to $1$ on 
\bel{defIK}
I_K:=\overline{\cup_{(x_1,x_2) \in \R^2} \{x_3 ; \; (x_1,x_2,x_3) \in K \}},
\ee
 ( $\chi_3 = 1$ near $K$ is sufficient)
  and consequently, for $ l=\gamma, \infty$, we have 
$\langle f , V^l g \rangle= \langle f_{\chi_3} , V^l g \rangle$ with $ f_{\chi_3}= H_0(\chi_3 v)= (H_0 \chi_3 H_0^{-1} )f$. Thus, 
$V^l = H^{-1}_0 \chi_3 H_0 V^l$ and taking the adjoint relation we deduce:
%\begin{align*}
$$
V^l= V^lH_0 \chi_3 H^{-1}_0  = H^{-1}_0 \chi_3 H_0 V^lH_0 \chi_3 H_0^{-1}
$$ 
%\\& = \Rd \begin{cases}
%(M_\infty H_0 \chi_3 H_0^{-1})^*M_\infty H_0 \chi_3 H_0^{-1} \quad \text{if} \quad l = \infty,\\
%- (M_\gamma H_0 \chi_3 H_0^{-1})^*M_\gamma H_0 \chi_3 H_0^{-1} \quad \text{if} \quad l = \gamma.
%\end{cases}
%\Bk
%\end{align*}
By using the orthogonal decomposition of $H_0^{-1}$:
$$H_0^{-1}= \sum_{q \in \N} p_q \otimes (D^2_{x_3} + \l_q)^{-1}$$
with $(D^2_{x_3} + \l_q)^{-1}$ having the integral kernel $\frac{e^{-\sqrt{\l_q} \vert x_3 -x_3' \vert}}{2 \sqrt{\l_q} } $, we deduce that  $H_0 \chi_3 H_0^{-1}= \chi_3 + [D_{x_3}^2, \chi_3] H_0^{-1}$ is bounded from $e^{\epsilon \langle x_3 \rangle} L^2(\R^3)$ into $L^2(\R^3)$ for $0< \epsilon < \sqrt{b}$ and then so is $M_l$, $l = \gamma, \infty$.  
 
 \end{proof}

\begin{remark}\label{rem:Mlchoice}
As written in the above proof, we can take $M_l=(M_l)^*=|V_l|^{\frac12}$, but sometimes other choices could be more convenient. In particular in order to reduce our analyse to the boundary $\Sigma$, it could be interesting to consider operator $M_l$ from $L^2(\R^3)$ into $L^2(\Sigma)$ by exploiting the link with the Dirichlet-Neumann and Robin-Dirichlet operators (see Subsection \ref{ss3,3}).
\end{remark}
%Attention! Ca ne marche peut-?tre pas si $\gamma$ est un pseudo!!! (operator non local)

%$---->$ PRENDRE $\gamma$ operateur de multiplication par une fonction.... ce qui ne sera sans doute pas utile pour la SSF!!

\subsection{Relation with Dirichlet-Neumann and Robin-Dirichlet operators}\label{ss3,3}

\quad Taking $g=f=H_0 v$ and by introducing $w_{\om,\infty}:= v_{|\Omega} - u_{\om,\infty}$ in \eqref{ExpVinfty}, we obtain:
%\bel{Vinftyff}
%\langle f , V^\infty f \rangle_{L^2(\R^3)}  = \int_K v_{| K} \, \overline{f_{| K}} dx - \int_\Sigma \Gamma_0(v) \,  \overline{\dbdN u_{\om,\infty}}d \sigma
%%&=\langle H_0^{-1} f , g_{| K} \rangle_{L^2(K)} - \langle H_0^{-1} f , \dbdN (H_\Omega^\infty)^{-1}(g_{|\Omega}) \rangle_{L^2(\Sigma)}
%\ee
%with $v=H_0^{-1}f$, $u_{\om,\infty}= (H_\Omega^\infty)^{-1}(f_{|\Omega})$.

%Thus, by introducing $w_{\om,\infty}:= v_{|\Omega} - u_{\om,\infty}$, we have 
\bel{Vinftyff}
\langle H_0 v , V^\infty H_0 v \rangle_{L^2(\R^3)}  = \int_K v_{| K} \, \overline{f_{| K}} dx + \int_\Sigma \Gamma_0(v) \,  \overline{\dbdN v_{|\Omega}}d \sigma -
\int_\Sigma \Gamma_0(v) \,  \overline{\dbdN w_{\om,\infty}}d \sigma,
\ee
with $w_{\om,\infty}$ satisfying
\bel{eq100}
 \left\{
 \begin{array}{rclccc}
 (\nabla^A)^2 w_{\om,\infty} & = &0 & \hbox{ in } & \om\\
\Gamma_0( w_{\om, \infty})&= &\Gamma_0(v).
 \end{array}
 \right.
 \ee
 
On the same way, from \eqref{ExpVgamma}, we have   
 \bel{ExpVgammaff}
\langle H_0 v , V^\gamma H_0 v \rangle_{L^2(\R^3)}  = - \int_\Sigma \dbdR v \, \overline{\Gamma_0\big(  w_{K,\gamma}\big)- \Gamma_0\big(w_{\om,\gamma}\big)}d \sigma,
\ee
with  $w_{\om,\gamma}=v_{|\om}-u_{\om,\gamma}$ and $w_{K,\gamma}=v_{|K}-u_{K}$ satisfying for $\bullet=\om, K$
\bel{eq101}
 \left\{
 \begin{array}{rclccc}
 (\nabla^A)^2 w_{\bullet,\gamma}& =& 0 & \hbox{ in } & \bullet\\
 \dbdR w_{\bullet,\gamma}& = & \dbdR v.
 \end{array}
 \right.
 \ee

 Consequently, $V^\infty$ and $V^\gamma$ are related to the {\it Dirichlet-Neumann} and {\it Robin-Dirichlet} operators:
 \bel{eq801}
\langle H_0 v , V^\infty H_0 v \rangle_{L^2(\R^3)}  = 
\langle  v_{|K} , (H_0 v)_{| K} \rangle_{L^2(K)} + \langle \Gamma_0( v_{|\om})\, , \, \dbdN (v_{|\om})-{\bf DN_\Omega} (\Gamma_0 v_{|\om}) \rangle_{L^2(\Sigma)},
%+ \langle \Gamma_0(v_{|\om})\, , \, \DNo (\Gamma_0(v_{|\om})) \rangle_{L^2(\Sigma)}
\ee

\bel{eq811}
\langle H_0 v, V^\gamma H_0 v \rangle_{L^2(\R^3)}= - \langle \dbdR v \, , \,( {\bf RD_K}-{\bf RD_\Omega}) \dbdR v \rangle_{L^2(\Sigma)}.
\ee
{ For the definition and elliptic properties of these operators (on some subspaces of finite codimension), we refer to Proposition \ref{defDRRD}.

{

\subsection{Definition and characterisation of the resonances}

\quad Let us introduce, for $\im (z) >0$, the bounded operators 
\bel{defRNR}
{\Tilde R}^\gamma(z)=({\Tilde H}^\gamma-z)^{-1}=({H}^\gamma_\Omega-z)^{-1}\oplus ({H}^{-\gamma}_K-z)^{-1}  \quad    \textup{and} \quad  {\Tilde R}^\infty(z)=({H}^\infty_\Omega-z)^{-1}\oplus 0.
\ee

\begin{proposition}\label{p1}
For $l=\gamma, \infty$, the operator-valued function 
$${\Tilde R}^l(z): e^{- \epsilon \langle x_3 \rangle} L^2(\R^3) \longrightarrow e^{\epsilon \langle x_3 \rangle}L^2(\R^3)$$
has a meromorphic extension \big(also denoted ${\Tilde R}^l(\cdot)$\big) from the open upper half plane to ${\mathcal M}_\epsilon$, $\epsilon <\sqrt{b}$. Moreover, the following assertions are equivalent:
\begin{enumerate} 
\item{z is a pole of ${\Tilde R}^l$ in $\Lc\Big(e^{- \epsilon \langle x_3 \rangle} L^2(\R^3), e^{\epsilon \langle x_3 \rangle}L^2(\R^3) \Big)$},
\item{z is a pole of $M_l{\Tilde R}^lM^l$ in $\Lc\Big( L^2(\R^3)\Big)$},
\item{ $-1$ is an eigenvalue of $\varepsilon(l) B^l(z)$ with
\bel{defT}
B^l(z):= z M_lH_0 R_0(z) M^l= z M_lM^l + z^2 M_l R_0(z) M^l,
\ee
where $\varepsilon(\infty)=1$, $\varepsilon(\gamma)=-1$. 
}
\end{enumerate}
\end{proposition}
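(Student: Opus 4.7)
The plan is to derive an explicit meromorphic formula for $\tilde R^l(z)$ whose only nontrivial poles come from the Fredholm condition for $I + \varepsilon(l) B^l(z)$, from which all three equivalences follow.

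First, starting from $(\tilde H^l)^{-1} = H_0^{-1} - V^l$, I would rewrite $(\tilde H^l)^{-1} - z^{-1}$ as $(H_0^{-1} - z^{-1}) - V^l = (H_0^{-1} - z^{-1})[I + zH_0 R_0(z) V^l]$, using the elementary identity $(H_0^{-1} - z^{-1})^{-1} = -zH_0R_0(z)$. Inverting and combining with $\tilde H^l \tilde R^l(z) = I + z\tilde R^l(z)$ gives $I + z\tilde R^l(z) = [I + zH_0R_0(z)V^l]^{-1}H_0R_0(z)$. Substituting the factorization $V^l = \varepsilon(l) M^l M_l$ from Lemma~\ref{lemVM} and applying the Birman--Schwinger commutation $(I + AB)^{-1}A = A(I + BA)^{-1}$, the middle inverse becomes $[I + \varepsilon(l) B^l(z)]^{-1}$ sandwiched between $M^l$ and $M_l$. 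Simplifying via $H_0R_0(z) = I + zR_0(z)$ yields the key identity
\begin{equation}\label{mainid}
\tilde R^l(z) = R_0(z) - \varepsilon(l)\, H_0R_0(z) M^l \, [I + \varepsilon(l) B^l(z)]^{-1} \, M_l H_0R_0(z).
\end{equation}

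Next I would establish the meromorphic extension. By Proposition~\ref{propMeroR0}, $R_0(z)$ extends holomorphically to $\mathcal{M}_\epsilon$ as a map $e^{-\epsilon\langle x_3\rangle}L^2 \to e^{\epsilon\langle x_3\rangle}L^2$. Combined with the boundedness $M^l: L^2 \to e^{-\epsilon\langle x_3\rangle}L^2$ and $M_l: e^{\epsilon\langle x_3\rangle}L^2 \to L^2$ from Lemma~\ref{lemVM}, this realizes $B^l(z)$ as a compact, $\mathcal{L}(L^2(\R^3))$-valued holomorphic function on $\mathcal{M}_\epsilon$ (compactness is inherited from the compactness of $V^l$ in the factorization). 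For $z_0$ in the upper half-plane with $\Im z_0$ large, the self-adjointness of $\tilde H^l$ forces $I + \varepsilon(l) B^l(z_0)$ to be invertible: a nonzero kernel element would, via identity \eqref{mainid}, force a non-real eigenvalue of $\tilde H^l$. The analytic Fredholm theorem then makes $[I + \varepsilon(l) B^l(z)]^{-1}$ meromorphic on $\mathcal{M}_\epsilon$, and \eqref{mainid} transports this meromorphic structure to $\tilde R^l(z)$ between the weighted spaces.

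For the equivalences: (ii)$\Rightarrow$(i) is immediate since $M_l, M^l$ are independent of $z$. The implication (i)$\Rightarrow$(iii) follows from \eqref{mainid}: because $R_0(z)$ and $H_0R_0(z)$ are holomorphic on $\mathcal{M}_\epsilon$, any pole of $\tilde R^l$ must originate from $[I + \varepsilon(l) B^l(z)]^{-1}$, which by the Fredholm alternative happens exactly when $\ker(I + \varepsilon(l) B^l(z)) \neq \{0\}$, i.e.\ when $-1$ is an eigenvalue of $\varepsilon(l) B^l(z)$. For (iii)$\Rightarrow$(ii), I would sandwich \eqref{mainid} by $M_l$ and $M^l$ and use $M_l H_0 R_0(z) M^l = z^{-1} B^l(z)$ to obtain
\begin{equation*}
M_l \tilde R^l(z) M^l = M_l R_0(z) M^l - \tfrac{\varepsilon(l)}{z^2}\, B^l(z)\,[I+\varepsilon(l) B^l(z)]^{-1}\,B^l(z),
\end{equation*}
whose pole coincides with that of $[I + \varepsilon(l) B^l(z)]^{-1}$, confirming (ii).

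The main obstacle will be the bookkeeping across the weighted spaces and the Riemann surface $\mathcal{M}_\epsilon$: the operator $z H_0 R_0(z) V^l$ does not act nicely on $L^2$ alone, and the whole point of the factorization $V^l = \varepsilon(l) M^l M_l$ is to relocate the Fredholm analysis to the honestly compact, holomorphic family $B^l(z)$ on the unweighted $L^2(\R^3)$. A secondary subtlety is to verify that residues are not accidentally killed by the sandwich; this rests on injectivity of $H_0 R_0(z) M^l$ on eigenvectors of $\varepsilon(l) B^l(z)$ at $-1$, which follows from the positivity of $H_0$ (so that $H_0 R_0(z) = I + z R_0(z)$ is injective away from $0$) together with the observation that $M^l \psi = 0$ forces $B^l(z) \psi = 0$ and hence $\psi = 0$ on such an eigenspace.
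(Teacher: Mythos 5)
Your argument is correct and follows essentially the same Birman--Schwinger strategy as the paper: the explicit second-resolvent-type formula you derive for $\tilde{R}^l(z)$ is a compact repackaging of the paper's two resolvent equations combined with the algebraic identity \eqref{eqreso}, and the analytic Fredholm step together with the cycle of equivalences then proceed along the same lines.

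One point does need tightening, namely the justification that $I+\varepsilon(l)B^l(z_0)$ is invertible somewhere in $\mathbb{C}_+$. Taking $\Im z_0$ large does not work, since $B^l(z)=zM_lH_0R_0(z)M^l$ grows like $|z|$ there, and appealing to your displayed identity for $\tilde{R}^l(z)$ is circular, because that identity is derived only after the inversion has been performed. The clean argument is that for every $z\in\mathbb{C}_+$ the factorization $(\tilde{H}^l)^{-1}-z^{-1}=(H_0^{-1}-z^{-1})\bigl[I+zH_0R_0(z)V^l\bigr]$ has its left-hand side and its first right-hand factor invertible (because $1/z$ is non-real and hence lies in the resolvent sets of the bounded self-adjoint operators $H_0^{-1}$ and $(\tilde{H}^l)^{-1}$), so $I+zH_0R_0(z)V^l$, and therefore $I+\varepsilon(l)B^l(z)$, is invertible on all of $\mathbb{C}_+$. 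Alternatively one can use that $\|B^l(z)\|\to 0$ as $z\to 0$ inside $\mathbb{C}_+$ and run a Neumann series. A smaller remark: in the step (iii)$\Rightarrow$(ii) the claim that the pole of $B^l(z)[I+\varepsilon(l)B^l(z)]^{-1}B^l(z)$ coincides with that of $[I+\varepsilon(l)B^l(z)]^{-1}$ is correct but deserves the one-line verification $B[I+\varepsilon B]^{-1}B=\varepsilon^{-1}B-\varepsilon^{-2}I+\varepsilon^{-2}[I+\varepsilon B]^{-1}$, which makes the injectivity discussion in your closing paragraph unnecessary for that implication.
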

\begin{proof}
For $\im (z) >0$ and $l=\gamma, \infty$ we have the resolvent equation:
$$ {\Tilde R}^l(z) \Big( I + z V^l H_0 (H_0-z)^{-1}\Big) ={\Tilde R}^l(0) H_0 (H_0-z)^{-1}=(H_0-z)^{-1} -  V^l H_0 (H_0-z)^{-1}.$$
Let us denote by $e_{\pm}$ the multiplication operator by  $ \textup{e}^{\pm \epsilon \langle x_{3} \rangle}$. 
Then, by introducing $e_\pm$, and writing $H_0 (H_0-z)^{-1}= I + z (H_0-z)^{-1}$, we have
$$e_- {\Tilde R}^l(z) e_-  \,  \Big( I + z e_+ V^l e_- + z^2 e_+ V^l  (H_0-z)^{-1}e_- \Big) =$$
$$ e_- (H_0-z)^{-1}e_- - e_- V^l e_-  - z e_- V^l  (H_0-z)^{-1}e_-.$$
Since $e_- ({H}_0 -z)^{-1} e_-$ admits a holomorphic extension from the open upper half plane to ${\mathcal M}_\epsilon$ \big(see Proposition 1 of \cite{BoBrRa07_01}\big), according to Lemma \ref{lemVM}, $e_+ V^l  (H_0-z)^{-1}e_-$  and $e_- V^l  (H_0-z)^{-1}e_-$ 
can be holomorphically extended to ${\mathcal M}_\epsilon$. Then from the Fredholm analytic Theorem we deduce the  meromorphic extension of $z \mapsto {\Tilde R}^l(z)$ from the open upper half plane to ${\mathcal M}_\epsilon$. 

Moreover, by writing 
$$M_l{\Tilde R}^l(z) M^l = (M_l e_+) \, e_- {\Tilde R}^l(z)  e_- \, ( e_+ M^l), $$
we show the holomorphic extension of $M_l{\Tilde R}^lM^l$ in $\Lc\Big( L^2(\R^3)\Big)$ with poles among those of $e_- {\Tilde R}^l  e_- $. Conversely, according to the following resolvent equation, the poles of $e_- {\Tilde R}^l  e_- $ are those of $M_l{\Tilde R}^lM^l$ in $\Lc\Big( L^2(\R^3)\Big)$:
$${\Tilde R}^l(z) = (H_0-z)^{-1} - V^l  H_0 (H_0-z)^{-1} - z H_0(H_0-z)^{-1}  \,  {\Tilde R}^l(0) \, V^l \, H_0 (H_0-z)^{-1} $$
$$+ z^2  H_0(H_0-z)^{-1} \, V^l \, {\Tilde R}^l(z) \, V^l \, H_0 (H_0-z)^{-1}.$$
We conclude the proposition from the equation:
\bel{eqreso}
\Big( I  + \varepsilon(l) M_l \Big(\frac{1}{z} - H_0^{-1}\Big)^{-1} M^l\Big) \,\Big( I  -  \varepsilon(l) M_l  \Big(\frac{1}{z} -{\Tilde R}^l(0)\Big)^{-1} M^l\Big) = I, 
\ee
and using that
$$M_l \Big(\frac{1}{z} - H_0^{-1}\Big)^{-1}M^l= z M_l H_0(H_0 -z)^{-1} M^l = z M_l M^l+ z^2 M_l (H_0 -z)^{-1} M^l,$$
$$ M_l \Big (\frac{1}{z} -{\Tilde R}^l(0)\Big)^{-1} M^l=z M_l M^l+ z^2 M_l {\Tilde R}^l(z) M^l.$$

\end{proof}
By definition of ${\Tilde R}^l$, $l=\infty, \gamma$, (see \eqref{defRNR}), we also have:
\begin{corollary}\label{Cor:PoleReso}
For $l=\gamma, \infty$, the operator-valued function 
$$({H}^l_\Omega-z)^{-1}: e^{- \epsilon \langle x_3 \rangle} L^2(\Omega) \longrightarrow e^{\epsilon \langle x_3 \rangle}L^2(\Omega)$$
has a meromorphic extension, denoted $R^l_\om$, from the open upper half plane to ${\mathcal M}_\epsilon$, $\epsilon <\sqrt{b}$. 

Moreover, according to their multiplicities (i.e. the rank of their residues), the poles of $R^\infty_\om$
%$({H}^\infty_\Omega-z)^{-1}$ 
coincide with the poles of ${\Tilde R}^\infty$ and the poles of $R^\gamma_\om$
%$({H}^\gamma_\Omega-z)^{-1}$ 
are those of ${\Tilde R}^\gamma$ excepted the eigenvalues of ${H}^{-\gamma}_K$.
\end{corollary}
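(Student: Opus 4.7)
The plan is to read Proposition~\ref{p1} through the block-diagonal structure of $\widetilde{R}^l$ under the orthogonal splitting $L^2(\R^3)=L^2(\Omega)\oplus L^2(K)$, which is compatible with the weight $e^{\epsilon\langle x_3\rangle}$ since both blocks are stable under multiplication by this function (the weight being a scalar multiplication operator respecting the spatial decomposition). For $l=\infty$, formula \eqref{defRNR} reads $\widetilde{R}^\infty(z)=(H_\Omega^\infty-z)^{-1}\oplus 0$, so the meromorphic extension furnished by Proposition~\ref{p1} restricts to the $L^2(\Omega)$-block as a meromorphic extension $R^\infty_\Omega$ of $(H_\Omega^\infty-z)^{-1}$ to $\mathcal{M}_\epsilon$; since the $K$-block contributes nothing, the poles and the ranks of the residues are preserved exactly.

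For $l=\gamma$, one has $\widetilde{R}^\gamma(z)=(H_\Omega^\gamma-z)^{-1}\oplus(H_K^{-\gamma}-z)^{-1}$. Because $K$ is a bounded smooth domain, $H_K^{-\gamma}$ has compact resolvent and its spectrum is a discrete sequence of real eigenvalues $\{\mu_j\}$ of finite multiplicity; accordingly $(H_K^{-\gamma}-z)^{-1}$ extends meromorphically to the whole complex plane with simple poles at the $\mu_j$ of residue rank $\dim\ker(H_K^{-\gamma}-\mu_j)$. Restricting the meromorphic extension of $\widetilde{R}^\gamma$ from Proposition~\ref{p1} to the $L^2(\Omega)$-block then provides the required meromorphic extension $R^\gamma_\Omega$ on $\mathcal{M}_\epsilon$. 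At any $z_0\in\mathcal{M}_\epsilon$ the residue of $\widetilde{R}^\gamma$ splits as a direct sum: the residue of $R^\gamma_\Omega$ in $L^2(\Omega)$, and (when $z_0=\mu_j$) the spectral projector of $H_K^{-\gamma}$ onto $\ker(H_K^{-\gamma}-\mu_j)$ in $L^2(K)$. Hence the poles of $R^\gamma_\Omega$, counted with ranks of residues, are those of $\widetilde{R}^\gamma$ minus the contribution of the real eigenvalues of $H_K^{-\gamma}$.

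The only delicate point is this residue bookkeeping in the (possibly empty) exceptional case when a pole of $R^\gamma_\Omega$ coincides with some $\mu_j$: there the rank of the residue of $\widetilde{R}^\gamma$ equals the rank of the residue of $R^\gamma_\Omega$ plus $\dim\ker(H_K^{-\gamma}-\mu_j)$, which must be subtracted when identifying resonance multiplicities. Since $\{\mu_j\}$ is a discrete real set, only finitely many $\mu_j$ lie in any bounded region of $\mathcal{M}_\epsilon$, so near a fixed Landau level $\Lambda_q$ this correction affects only a finite number of terms and has no effect on the leading asymptotics of the resonance counting function treated in Theorem~\ref{theo1}.
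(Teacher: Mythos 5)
Your proof is correct and follows the same route the paper intends (the paper treats the corollary as an immediate reading of Proposition~\ref{p1} through the block-diagonal structure $\widetilde{R}^l = R^l_\Omega \oplus (\text{$K$-block})$ from \eqref{defRNR}). Your explicit handling of the residue bookkeeping in the possible coincidence case — where a pole of $R^\gamma_\Omega$ sits at an eigenvalue $\mu_j$ of $H_K^{-\gamma}$ and the rank of the residue of $\widetilde{R}^\gamma$ is the sum of the two contributions — is a welcome precision that the paper leaves implicit under the phrase ``according to their multiplicities.''
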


}

\section{Outline of proofs}\label{S:proof}

\quad In order to prove our main results, in this section, let us begin by recalling some auxiliary results concerning characteristic values of holomorphic operators due to Bony, the first author and Raikov \cite{BoBrRa14_01}. Then we will apply these auxiliary results to our problem and prove the main results.

\subsection{Auxiliary results}\label{subP1}

\quad Let $\mathcal{D}$ be a domain of $\mathbb{C}$ containing zero and let us consider an holomorphic operator-valued function $A : \mathcal{D} \longrightarrow S_{\infty}$, where $S_{\infty}$ is the class of compact operators in a separable Hilbert space.

\begin{definition}\label{defp1}
For a domain $\Delta \subset \mathcal{D} \setminus \lbrace 0 \rbrace$, a complex number $z \in \Delta$ is a \textit{characteristic value} of $z \mapsto I - \frac{A(z)}{z}$ if the operator $I - \frac{A(z)}{z}$ is not invertible. The multiplicity of a 
characteristic value $z_{0}$ is defined by
\begin{equation}\label{eqp2}
\textup{mult}(z_{0}) := \frac{1}{2i \pi}\textup{tr} \Big( \int_{\mathcal{C}} \big(- \frac{A(z)}{z}\big)^\prime \, \big(I - \frac{A(z)}{z}\big)^{-1} dz\Big),
\end{equation}
where $\mathcal{C}$ is a small contour positively oriented containing $z_{0}$ as the unique point $z$ satisfying $\big(I - \frac{A(z)}{z}\big)$ is not invertible.
\end{definition}

Let us denote by $\mathcal{Z}(\Delta,A)$, the set of the  \textit{characteristic values} of $\big(I - \frac{A(z)}{z}\big)$ inside $\Delta$:
$$
\small{ \mathcal{Z}(\Delta,A):= \left\lbrace z \in \Delta : I - \frac{A(z)}{z} \hspace{0.8mm} \textup{is not invertible} \right\rbrace}.
$$ 
If there exists $z_{0} \in \Delta$ such that $I - \frac{A(z_{0})}{z_{0}}$ is not invertible, then $\mathcal{Z}(\Delta,A)$ is a discrete set (see e.g. \cite[Proposition 4.1.4]{GoSi71_01}). 
%So we define
%\begin{equation}\label{eqP1}
%\mathcal{N}(\Delta) := \# \mathcal{Z}(\Delta).
%\end{equation}

Assume that $A(0)$ is self-adjoint and for $T$ a compact self-adjoint operator, let us introduce the counting function
% For a fixed constant $\alpha > 0$ introduce
%\begin{equation}\label{eqP2}
%\mathcal{C}_{\alpha}(a,b) := \big\lbrace x + iy \in \mathbb{C} : a \leq x \leq b, -\alpha x \leq y \leq \alpha x \big\rbrace,
%\end{equation}
%with $a > 0$ tending to zero and $b > 0$. Let 
\begin{equation}\label{eqP0}
n(r,T) := \textup{Tr} \hspace{0.6mm} \one_{[r,+\infty)}(T),
\end{equation}
 the number of eigenvalues of the operator $T$ lying in the interval $[r,+\infty) \subset \mathbb{R}^{\ast}$, and counted with their multiplicity. Denote by $\Pi_{0}$ the orthogonal projection onto $\textup{ker}  \hspace{0.6mm} A(0)$.
 
As consequence of \cite[Corollary 3.4, Theorem 3.7, Corollary 3.11]{BoBrRa14_01}, 
%\cite[Theorem 3.7]{BoBrRa14_01} and \cite[Corollary 3.11]{BoBrRa14_01}
 we have the following result which states that  the \textit{characteristic values} of $z \mapsto I - \frac{A(z)}{z}$ are localized near the real axis where $A(0)$ has its spectrum, and the distribution of the \textit{characteristic values} near $0$ is governed by the distribution of the eigenvalues of $A(0)$ near $0$. 

\begin{proposition}\label{P:P1} 
\textup{\cite
{BoBrRa14_01}}

 Let $A$ be as above and $I - A^\prime(0) \Pi_{0}$ be invertible. Assume that $\Delta \Subset \mathbb{C} \setminus \lbrace 0 \rbrace$ is a bounded domain with smooth boundary $\partial \Delta$ which is transverse to the real axis at each point of $\partial \Delta \cap \mathbb{R}$. We have:

$\textup{\textbf{(i)}}$ The characteristic values $z \in \mathcal{Z}(\Delta,A)$ near $0$ satisfy $\vert \Im(z) \vert = o(\vert z \vert)$.

$\textup{\textbf{(ii)}}$ If the operator $A(0)$ has a definite sign $(\pm A(0) \geq 0)$, then the characteristic values $z$ near $0$ satisfy $\pm \Re(z) \geq 0$.
%, respectively.

$\textup{\textbf{(iii)}}$ For $\pm A(0) \geq 0$, if the counting function of $A(0)$ satisfies:
$$n(r, \pm A(0))= c_0  \frac{\vert \ln r \vert}{\ln \vert \ln r \vert} (1+o(1)), \qquad r \searrow 0,$$
then, for $r_0 > 0$ fixed, the counting function of the characteristic values near $0$ satisfies:
$$\# \big\lbrace z \in \mathcal{Z}(\Delta,A); \; r < \vert z \vert < r_0 \big\rbrace = 
c_0  \frac{\vert \ln r \vert}{\ln \vert \ln r \vert} (1+o(1)), \qquad r \searrow 0.$$
\end{proposition}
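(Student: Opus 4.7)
The plan is to reduce the infinite-dimensional characteristic-value problem to a finite-dimensional one by a Schur/Grushin reduction localized near $\ker A(0)$, after which all three assertions will follow from elementary eigenvalue perturbation for a finite holomorphic matrix family with self-adjoint leading part.

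First I would exploit that $A(0)$ is compact and self-adjoint, so its spectrum consists of real eigenvalues accumulating only at $0$. For a small $\delta>0$, let $\Pi_{\delta}$ be the spectral projector of $A(0)$ onto eigenvalues in $[-\delta,\delta]$; it has finite rank, and contains $\Pi_{0}$. On $\Ran(I-\Pi_{\delta})$ the operator $A(0)$ is invertible with spectral gap of size $\delta$, so a Neumann series argument shows that for $|z|$ small the block $(I-\Pi_{\delta})(I-A(z)/z)(I-\Pi_{\delta})$ is invertible; the hypothesis that $I-A'(0)\Pi_{0}$ is invertible is used precisely to absorb the most degenerate contribution, coming from the piece of $\ker A(0)$ which makes $A(z)/z$ genuinely singular. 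A standard Schur complement then shows that $I-A(z)/z$ is non-invertible iff a finite-rank operator $E_{-+}(z)$ on $\Ran \Pi_{\delta}$ is non-invertible, and the multiplicities defined by \eqref{eqp2} coincide with the algebraic multiplicities of the zeros of $\det E_{-+}(z)$.

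Next I would write $E_{-+}(z)=z^{-1}(zI-\widetilde A(z))$, where $\widetilde A(z)$ is a matrix-valued holomorphic function whose value at $0$ is the (self-adjoint) restriction of $A(0)$ to $\Ran \Pi_{\delta}$ and whose derivative is controlled in terms of $A'(0)$. Parts $\textup{\textbf{(i)}}$ and $\textup{\textbf{(ii)}}$ then follow from standard perturbation theory for eigenvalues of a holomorphic matrix family with self-adjoint leading part: the eigenvalues of $\widetilde A(0)$ are real and, under the sign hypothesis of $\textup{\textbf{(ii)}}$, lie on one half-line; an $O(z)$ non-self-adjoint perturbation can only produce a sub-leading imaginary part of order $o(|z|)$ and cannot push any eigenvalue to the opposite half-plane as $|z|\to 0$. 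For $\textup{\textbf{(iii)}}$, the argument principle applied to $\det\bigl(zI-\widetilde A(z)\bigr)$ on the annulus $\{r<|z|<r_{0}\}$ identifies the number of characteristic values there with the number of fixed points $z=\lambda(z)$ among the eigenvalue branches $\lambda(z)$ of $\widetilde A(z)$. Since $\widetilde A(z)=\widetilde A(0)+O(|z|)$ these fixed points differ from the eigenvalues of $A(0)$ by $O(r^{2})$, so the count matches $n(r',\pm A(0))$ for some $r'$ with $|r'-r|=O(r^{2})$; the slowly varying asymptotic $|\ln r|/\ln|\ln r|$ is stable under such shifts, which transfers the formula.

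The main obstacle will be implementing $\textup{\textbf{(iii)}}$ rigorously. The cutoff $\delta$ must be chosen depending on $r$, small enough to expose the relevant small eigenvalues of $A(0)$ but large enough that the Schur complement error is controlled by the gap, and one must check that the drift of the eigenvalue branches of $\widetilde A(z)$ off the real axis does not corrupt the count in a thin annulus. Carrying this out typically requires sharp upper and lower bounds for $|\det(zI-\widetilde A(z))|$ on the circles $|z|=r$ and $|z|=r_{0}$, obtained via Jensen's formula together with the very slow growth of $n(r,\pm A(0))$; this is the delicate technical point extracted in the abstract machinery of \cite{BoBrRa14_01}.
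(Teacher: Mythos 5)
The paper does not actually prove this proposition; it is imported verbatim from \cite[Corollary 3.4, Theorem 3.7, Corollary 3.11]{BoBrRa14_01}, so there is no in-paper argument to compare against and your attempt must be judged on its own merits.

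The core of your reduction has a genuine gap. You take $\Pi_\delta$ to be the spectral projector of $A(0)$ onto $[-\delta,\delta]$ and assert it has finite rank. But $A(0)$ is only assumed compact and self-adjoint, so $\ker A(0)$ may be infinite-dimensional — and in the application made in this very paper it \emph{is}: $A_q^l(0)=\Lambda_q^2\, B_{q,l}^{*}B_{q,l}$ with $B_{q,l}=\frac{1}{\sqrt2}(p_q\otimes c)\,e_{+}M^l$, and since $c:L^2(\R)\to\C$ is rank one, $\ker B_{q,l}=\ker A_q^l(0)$ has infinite dimension. Thus $\Pi_\delta\supset\Pi_0$ has infinite rank, the Schur complement $E_{-+}(z)$ is not a matrix, $\det E_{-+}(z)$ is undefined, and the finite-dimensional eigenvalue-perturbation and argument-principle steps on which your (i)--(iii) rest do not get off the ground. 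Repairing this requires splitting into (at least) three spectral blocks — $\ker A(0)$, controlled by the invertibility of $I-A'(0)\Pi_0$; the finitely many nonzero eigenvalues in a window $\delta'(r)\le|\lambda|\le\delta$ with $\delta'$ shrinking with $r$; and $|\lambda|>\delta$, handled by a Neumann series — and then quantifying how the three interact uniformly as $r\searrow0$. That is essentially the content of \cite{BoBrRa14_01}, and it is substantially more delicate than the sketch you give. A secondary, smaller imprecision: the displacement of the fixed points $z=\lambda(z)$ from the eigenvalues of $A(0)$ is multiplicative of order $O(r)$, not additive $O(r^2)$; the slowly varying asymptotic $|\ln r|/\ln|\ln r|$ does tolerate such a comparable shift, so your conclusion survives, but the stated bound is incorrect.
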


{\Rd
%\begin{lemma}\label{lemP3} 
%\cite[Corollary 3.11]{bo} Suppose that the assumptions of Lemma \ref{lem3,5} happen. If
%$$
%n \big( [r,1] \big) = \Phi(r) \big( 1 + o(1) \big)
%$$ 
%as $r \searrow 0$, with $\phi \big( r(1 \pm \nu) \big) = \phi (r) \big( 1 + o(1) + \mathcal{O}(\nu) \big)
%$ for any $\nu > 0$ small enough, then
%\begin{equation}\label{2,35}
%\mathcal{N}\big(\mathcal{C}_{\alpha}(r,1)\big) = \Phi(r) \big( 1 + o(1) \big)
%\end{equation}
%as $r \searrow 0$.
%\end{lemma}

}

\subsection{Preliminary results}\label{subP2}

\quad In this subsection we apply the previous abstract results to our problem.

%{\Rd COMPLETER  par le contenu de la section?}

%For a linear compact self-adjoint operator $T$, we set $n_{\pm}(r,T) := \textup{Tr} \hspace{0.6mm} \one_{\pm(r,\infty)}(T)$, $r > 0$

%Recall that near a Landau level $\Lambda_q$, the resonances $z_{q}$ are parametrized by $z_q(k) = \Lambda_q + k^2$ with $\mid k \mid <\!\!<1$. As in previous works (see \cite{BoBrRa07_01}, \cite{BoBrRa14_01}) and from Proposition \ref{p1} we have the following result.

%{\Rd COMPLETER / REFERER au noyaux Int 1D...?}

\begin{proposition}\sl \label{c8}
Fix  $q \in \N$. Then $z_q(k)=\Lambda_q + k^2$, $0<|k| \ll 1$ is a
pole of ${\Tilde R}^l$ \big(defined by \eqref{defRNR}\big) in $\mathcal{L}( e^{- \epsilon \langle x_3 \rangle} L^2 , e^{\epsilon \langle x_3 \rangle}L^2)$ if and only if 
\bel{defel}
I - \varepsilon(l) \frac{A_q^l(ik)}{ik} \textup{ is not invertible }, 
\qquad
\varepsilon(l) := 
\begin{cases}
\hspace{0.3cm} 1 & \text{if} \quad l = \infty \\
-1 & \text{if} \quad l = \gamma
\end{cases},                  \qquad          
\ee
where $z \mapsto A_q^l(z) \in S_\infty(L^2(\R^3))$ is the holomorphic operator-valued function given by
\bel{defAql}
A_q^l(ik)= z_q(k)^2 M_l(p_q \otimes r(ik) )M^l - ik M_l\Big(z_q(k)  + z_q(k)^2  R_0(z_q(k))(I- p_q \otimes I_3)  \Big)M^l,
\ee
with $r (z)$ the integral operator in $L^2(\R_{x_3})$ whose  integral kernel is $\frac12 e^{z \vert
x_3-x^\prime_3\vert}$, $x_3, x_3'\in \R$.

In particular, $ A_q^l(0)= \Lambda_q^2 M_l(p_q \otimes r(0) )M^l$ is a non negative compact operator whose counting function satisfies
\bel{A0T}
n \big( r, A_q^l(0) \big) = n \big( r, T_{q}^{l} \big); \qquad T_{q}^{l} := \varepsilon(l) \Lambda_q^2 \, p_{q}W^{l}p_{q}
\ee
with $W^{l}$ defined on $L^{2}(\R^{2})$ by 
\begin{equation}\label{id4}
(W^l f^\perp) (x_{\perp})= \frac{1}{2} \int_{\R_{x_3}} \Big(V^l (f^\perp \otimes 1_{\mathbb{R}} ) \Big)(x_\perp,x_3) dx_3, 
%\qquad l = \infty, \gamma,
\end{equation}

\end{proposition}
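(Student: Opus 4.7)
The plan is to combine the pole characterization of \propref{p1} with the Landau spectral decomposition of $R_0$ to convert the operator $B^l$ into the claimed form $-A_q^l(ik)/(ik)$, and then to identify the leading operator $A_q^l(0)$ with the Toeplitz-type operator $T_q^l$ via a Birman--Schwinger swap.

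By \propref{p1}, $z_q(k)=\Lambda_q+k^2$ is a pole of $\tR^l$ if and only if $I+\varepsilon(l)B^l(z_q(k))$ fails to be invertible in $\mathcal{L}(L^2(\R^3))$, where $B^l(z)=zM_lM^l+z^2M_lR_0(z)M^l$. To bring this into the desired form, I isolate the singular part of $R_0(z_q(k))$ at $k=0$ using \eqref{gdr10} and the spectral resolution $H_{\text{Landau}}=\sum_{q'\in\N}\Lambda_{q'}p_{q'}$:
$$R_0(z_q(k))=p_q\otimes(D_{x_3}^2-k^2)^{-1}+R_0(z_q(k))(I-p_q\otimes I_3).$$
The summand containing the terms $q'\ne q$ depends holomorphically on $k$ near $0$ as an operator from $e^{-\epsilon\langle x_3\rangle}L^2$ to $e^{\epsilon\langle x_3\rangle}L^2$ (by \propref{propMeroR0} and the gap $\Lambda_{q'}-\Lambda_q\neq 0$). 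The singular summand $(D_{x_3}^2-k^2)^{-1}$ has integral kernel $\frac{i}{2k}e^{ik|x_3-x_3'|}=-(ik)^{-1}\cdot\tfrac12 e^{ik|x_3-x_3'|}$ for $\Im k>0$, so $(D_{x_3}^2-k^2)^{-1}=-(ik)^{-1}r(ik)$.

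Substituting this into $B^l(z_q(k))$ and multiplying by $-ik$ produces exactly
$$-ikB^l(z_q(k))=z_q(k)^2M_l(p_q\otimes r(ik))M^l-ikM_l\bigl[z_q(k)+z_q(k)^2R_0(z_q(k))(I-p_q\otimes I_3)\bigr]M^l=A_q^l(ik),$$
so $B^l(z_q(k))=-A_q^l(ik)/(ik)$ and the pole condition rewrites as $I-\varepsilon(l)A_q^l(ik)/(ik)$ not invertible, giving \eqref{defel} and \eqref{defAql}. Specializing to $k=0$ yields $A_q^l(0)=\Lambda_q^2 M_l(p_q\otimes r(0))M^l$, where $r(0)$ is the (unbounded on $L^2(\R)$) operator with constant kernel $\tfrac12$; the composition with $M_l$ and $M^l$ is nevertheless meaningful because \lemref{lemVM} guarantees $M^l f\in e^{-\epsilon\langle x_3\rangle}L^2$, making the $x_3$-integration absolutely convergent by Cauchy--Schwarz. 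Accordingly, introduce the bounded map $\mathcal{J}:L^2(\R^3)\to L^2(\R^2)$ by $(\mathcal{J}f)(x_\perp):=\int_\R(M^lf)(x_\perp,x_3)\,dx_3$; a direct duality computation then gives $\mathcal{J}^*u=M_l(u\otimes 1_\R)$ and the factorization $M_l(p_q\otimes r(0))M^l=\tfrac12\,\mathcal{J}^*p_q\mathcal{J}$.

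To conclude, I use that the nonzero spectrum of $\mathcal{J}^*p_q\mathcal{J}$ on $L^2(\R^3)$ coincides with multiplicities with that of $p_q\mathcal{J}\mathcal{J}^*p_q$ on $L^2(\R^2)$ (the standard $AB$/$BA$ nonzero spectrum identity). Using $M^lM_l=\varepsilon(l)V^l$ from \eqref{decompositionV} and the definition \eqref{id4} of $W^l$, I compute $\mathcal{J}\mathcal{J}^*u=P_\perp M^lM_l(u\otimes 1_\R)=\varepsilon(l)P_\perp V^l(u\otimes 1_\R)=2\varepsilon(l)W^lu$, so that $p_q\mathcal{J}\mathcal{J}^*p_q=2\varepsilon(l)p_qW^lp_q$. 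Hence $A_q^l(0)=(\Lambda_q^2/2)\mathcal{J}^*p_q\mathcal{J}$ and $T_q^l=\varepsilon(l)\Lambda_q^2p_qW^lp_q$ share their nonzero eigenvalues, yielding $n(r,A_q^l(0))=n(r,T_q^l)$ for $r>0$; non-negativity of both is immediate from the factorizations. The only delicate step is the book-keeping around the formally unbounded objects $r(0)$ and $1_\R$, which is resolved entirely by the exponential $x_3$-decay of $M^lf$ supplied by \lemref{lemVM}.
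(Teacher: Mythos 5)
Your proposal is correct and follows essentially the same route as the paper: it invokes Proposition \ref{p1}, isolates the singular Landau mode $p_q \otimes (D_{x_3}^2-k^2)^{-1}$ with kernel $\tfrac{i}{2k}e^{ik|x_3-x_3'|}=-r(ik)/(ik)$ to obtain $B^l(z_q(k))=-A_q^l(ik)/(ik)$, and then factorizes $A_q^l(0)$ and uses the $AB$/$BA$ nonzero-spectrum identity together with $M^lM_l=\varepsilon(l)V^l$ to identify the counting function with that of $T_q^l$. The only differences are cosmetic: you package the paper's factorization operator $B_{q,l}=\tfrac{1}{\sqrt2}(p_q\otimes c)e_+M^l$ as the map $\mathcal J$ (so that $B_{q,l}=\tfrac{1}{\sqrt2}p_q\mathcal J$), and the symbol $P_\perp$ is left undefined but clearly means the $x_3$-integration appearing in the definition of $\mathcal J$.
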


%In order to apply Lemmas \ref{lemP1} and \ref{lemP2} above, we have now to study the operators $A_q^l(0)$ for $l=\gamma, \infty$.
%Let $p_q$ be the orthogonal projection onto ${\rm ker}(H_{\text{Landau}} - \Lambda_q)$, the Landau
%Hamiltonian $H_{\text{Landau}}$ being defined in \eqref{gdr4}.
%
%
%\begin{lemma}\label{lemA0}
%With the above notations, for $l=\gamma, \infty$,  $A_q^l(0)$ is a non negative compact operator whose counting function satisfies:
%$$n_{+} \big( r, A_q^l(0) \big) = n_{+} \big( r, T_{q}^{l} \big); \qquad T_{q}^{l}=\Lambda_q^2 \, p_{q}W^{l}p_{q}$$
%where  $W^{l}$ is the operator defined on $L^{2}(\R^{2})$ by 
%\begin{equation}\label{id4}
%(W^l f^\perp) (x_{\perp})= \frac{1}{2} \int_{\R_{x_3}} \Big(V^l (f^\perp \otimes 1 ) \Big)(x_\perp,x_3) dx_3, 
%%\qquad l = \infty, \gamma,
%\end{equation}
%\end{lemma}
\begin{proof}
From Proposition \ref{p1}, $z_q(k) = \Lambda_q + k^2$  is a pole of ${\Tilde R}^l(.)$ if and only if  $I + \varepsilon(l)  B^l(\Lambda_q + k^2)$ is not invertible with 
$$B^l(\Lambda_q + k^2):= z_q(k) M_lM^l + z_q(k)^2 M_l R_0(\Lambda_q + k^2) M^l.$$
%according to the following proposition, we are in the framework of the previous section \ref{subP1}. 
We split the sandwiched resolvent $M_l R_0(z) M^l$ into two parts as follows
\begin{equation}\label{dec1}
M_l R_0(z) M^l = M_l R_0(z) (p_q \otimes I_3 ) M^l + M_l R_0(z) (I - p_q \otimes I_3) M^l.
\end{equation}
For $z_q(k)=\Lambda_q+k^2$ in the resolvent set of the operator $H_0$, we have
$$
(H_0 - \Lambda_q-k^2)^{-1} = \sum_{j \in \N} p_{j} \otimes \big( D_{x_3}^{2} + \Lambda_{j} - \Lambda_q - k^2 \big)^{-1}.
$$
Hence by definition of $p_q$, $M_l R_0(\Lambda_q+k^2) (I - p_q \otimes I_3) M^l$ is holomorphic near $k=0$ (for more details, see the proof of Proposition \ref{propMeroR0}). Furthermore, for $k$ chosen such that $\Im (k) > 0$, we have
\begin{equation}\label{dec2}
M_l R_0(z_q(k)) (p_q \otimes I_3) M^l = M_l p_q \otimes \big( D_{x_3}^2 - k^{2} \big)^{-1} M^l 
= -\frac{ M_l (p_q \otimes r(ik)) M^l}{ik},
\end{equation}
where $r(z)$ is the integral operator introduced above. Hence \eqref{defel} and \eqref{defAql}
 hold because 
 $$B^l(\Lambda_q + k^2) = - \frac{A_q^l(ik)}{ik}. $$
Let us compute the operator $A_q^l(0)$ for $l=\gamma, \infty$. We have
%By combining \eqref{defA}, \eqref{dec1} and \eqref{dec2}, we easily see that
\begin{equation}\label{opA_0}
A_q^l(0) = \Lambda_q^2 M_l (p_q \otimes r(0)) M^l, \qquad l = \gamma, \infty,
\end{equation}
where $r(0)$ is the operator acting from $e^{- \epsilon \langle x_3 \rangle} L^{2}(\R)$ into $e^{ \epsilon \langle x_3 \rangle} L^{2}(\R)$ with integral kernel given by the constant function $\frac{1}{2}$. 

Now from Lemma \ref{lemVM}, it follows that there exists a bounded operator $\mathcal{M}_{l}$ on $L^2(\R^3)$ such that $M_{l} = \mathcal{M}_{l} e^{- \epsilon \langle x_3 \rangle}$ for $l=\gamma, \infty$. Recalling that $e_\pm$ is the multiplication operator by $e_{\pm} :=  \textup{e}^{\pm \epsilon \langle x_{3} \rangle}$, it can be easily checked that 
\begin{equation}\label{id1}
M_l (p_q \otimes r(0)) M^l = \frac{M_l e_{+} (p_q \otimes c^{\ast} c) e_{+} M^l}{2}
= B_{q,l}^{\ast} B_{q,l},
\end{equation} 
where $c : L^{2}(\mathbb{R}) \longrightarrow \mathbb{C}$ is the operator defined by $c(u) := \langle u,e_{-} \rangle$, so that $c^{\ast} : \mathbb{C} \longrightarrow L^{2}(\mathbb{R})$ is given by $c^{\ast}(\lambda) = \lambda e_{-}$, and 
\begin{equation}\label{id2}
B_{q,l} := \frac{1}{\sqrt{2}} (p_q \otimes c) e_{+} M^l, \qquad l=\gamma, \infty.
\end{equation} 
More explicitly, the operator $B_{q,l}$ satisfies $B_{q,l} : L^{2}(\R^{3}) \longrightarrow L^{2}(\R^{2})$ with 
$$
(B_{q,l}\varphi)(x_{\perp}) = \frac{1}{\sqrt{2}} \int_{\R^{3}} {\mathcal P}_{q,b}(x_{\perp},x_{\perp}^\prime) 
(M^{l}\varphi)(x_{\perp}^\prime,x_{3}^\prime) dx_{\perp}^\prime dx_{3}^\prime,
$$
where ${\mathcal P}_{q,b}(\cdot,\cdot)$ is the integral kernel of $p_q$ given by:
\begin{equation}\label{ker1}
{\mathcal P}_{q,b}(x_\perp,x_\perp^\prime)=\frac{b}{2\pi} L_q \left( \frac{b \vert x_\perp - x_\perp^\prime \vert^2}{2}\right)
\exp \Big( -\frac{b}{4} \big( \vert x_\perp - x_\perp^\prime \vert^2 + 2i(x_1x_2^\prime-x_1^\prime x_2) \big) \Big),
\end{equation}
with $x_\perp = (x_{1},x_{2}), x_\perp^\prime = (x_1^\prime,x_2^\prime) \in \R^2$; here $L_q (t) : = \frac{1}{q !}
e^t \frac{d^q ( t^q e^{-t} )}{d t^q}$ are the Laguerre polynomials.

The adjoint operator $B_{q,l}^{\ast} : L^{2}(\R^{2}) \longrightarrow L^{2}(\R^{3})$ satisfies
$$
(B_{q,l}^{\ast}f^{\perp})(x_{\perp},x_{3}) = \frac{1}{\sqrt{2}} M_{l} (p_qf^{\perp}  \otimes 1_{\mathbb{R}}) (x_{\perp},x_{3}),
%\frac{1}{\sqrt{2}} M_{l} (p_q \otimes 1) (f^{\perp} \otimes )(x_{\perp},x_{3}),
$$
where $(p_qf^{\perp} \otimes 1_{\mathbb{R}})(x_{\perp},x_{3}) = p_qf^{\perp}(x_{\perp})$ is constant with respect to $x_{3}$. 
Thus, from \eqref{opA_0} and \eqref{id1}, $A_q^l(0)=\Lambda_q^2 B_{q,l}^{\ast} B_{q,l}$ is a positive compact self-adjoint operator with the non zero eigenvalues equal to those of  $\Lambda_q^2 B_{q,l} B_{q,l}^{\ast} : L^{2}(\R^{2}) \longrightarrow L^{2}(\R^{2})$ given by
\begin{equation}\label{id3}
\Lambda_q^2 B_{q,l} B_{q,l}^{\ast} =\varepsilon(l)  \Lambda_q^2  p_{q}W^{l}p_{q}=T^l_q, \qquad l=\gamma, \infty,
\end{equation}
where  $W^{l}$ is defined on $L^{2}(\R^{2})$ by \eqref{id4}. Here, we have used that $M^lM_l = \varepsilon(l) V^l$ (see Lemma \ref{lemVM}).
%\begin{equation}\label{id4}
%(W^l f^\perp) (x_{\perp})= \frac{1}{2} \int_{\R_{x_3}} \Big(V^l (f^\perp \otimes 1 ) \Big)(x_\perp,x_3) dx_3, \qquad l = \infty, \gamma,
%\end{equation}
%%where $(f^\perp\otimes 1)(x_1,x_2, x_3)= f^\perp(x_1,x_2)$ is constant with respect to $x_3$,
%the operators $V^l$ being defined on $L^2(\R^3)$ by \eqref{defV}. Note that thanks to the properties of $V^l$ (see Lemma \ref{lemVM}), the above integral define a compact operator on $L^2(\R^2)$.

 Then
\begin{equation}\label{id5}
\small{n \big( r, A_q^l(0) \big) = n \big( r, \Lambda_{q}^{2} B_{q,l}^{\ast} B_{q,l} \big)
= n \big( r, \Lambda_{q}^{2} B_{q,l} B_{q,l}^{\ast} \big) = n \big( r, T_{q}^{l} \big).}
\end{equation}
\end{proof}

\begin{remark}
Here, we omit the proof that the multiplicity of the pole $z_q(k)$ of ${\Tilde R}^l$ (i.e. the rank of the residue) coincides with the multiplicity of $k$ as a characteristic value of $k \mapsto I -  \epsilon(l) \frac{A_q^l(ik)}{ik}$ \big(see \eqref{eqp2}\big).
It is closely related to the proof of Proposition 3 of \cite{BoBrRa07_01}.
%, we can prove that the multiplicity of the pole $z_q(k)$ of ${\Tilde R}^l$ (i.e. the rank of the residue) coincides with the multiplicity of $k$ as a characteristic value $k \mapsto I -  \epsilon(l) \frac{A_q^l(ik)}{ik}$ (see \eqref{eqp2}).
\end{remark}

In order to  analyse the counting function $n \big( r, A_q^l(0) \big) = n \big( r, T_{q}^{l} \big)$ for $l = \gamma, \infty$, we will need the following result.

\begin{proposition}\label{propT1}
For all $q \in \N$, there exists $L_q$ a finite codimension subspaces of ker$(H_{\text{Landau}}-\Lambda_q)$ and there exists $C>1$, such that for compacts domains $K^\perp_0 \subset  K^\perp_1 \subset \R^2$ and compact intervals $I_0\subset I_1$ satisfying $K^\perp_0 \times I_0 \subset  K \subset K^\perp_1\times I_1$ with $\partial (K^\perp_i \times I_i) \cap \partial K= \emptyset$, $i=0,1$, we have for any $f^\perp \in L_q$, 
\bel{equivT}
\frac1{C} \langle f^\perp , p_q \one_{K^\perp_0} p_q f^\perp \rangle_{L^2(\R^2)} \leq  \langle f^\perp , T_q^l f^\perp \rangle_{L^2(\R^2)} 
\leq C  \langle f^\perp , p_q \one_{K^\perp_1} p_q f^\perp \rangle_{L^2(\R^2)},
\ee
where $T_q^l$ is defined by \eqref{A0T},  $ l=\infty, \gamma$.
\end{proposition}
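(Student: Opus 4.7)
The plan is to compute the quadratic form $\langle f^\perp, T_q^l f^\perp\rangle = \tfrac{1}{2}\varepsilon(l)\Lambda_q^2\langle g^\perp\otimes\one_\R, V^l(g^\perp\otimes\one_\R)\rangle$, where $g^\perp := p_q f^\perp$, split it into a bulk contribution over $K$ and a boundary contribution over $\Sigma$ via the explicit identities \eqref{Vinftyff} and \eqref{ExpVgammaff} from Subsection \ref{ss3,3}, and then bound each piece between the required Toeplitz quadratic forms $\langle f^\perp, p_q\one_{K_i^\perp}p_q f^\perp\rangle$.

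Since $g^\perp \in \ker(H_{\mathrm{Landau}} - \Lambda_q)$, the orthogonal decomposition of $R_0$ along the Landau levels gives formally $H_0^{-1}(g^\perp\otimes\one_\R) = \Lambda_q^{-1}(g^\perp\otimes\one_\R)$. Although $g^\perp\otimes\one_\R \notin L^2(\R^3)$, the compactness of $K$ and the $x_3$-decay of $V^l$ established at the end of the proof of Lemma \ref{lemVM} ensure that $V^l(g^\perp\otimes\one_\R)$ is integrable in $x_3$, so that by \eqref{id4} the pairing $\langle g^\perp, W^l g^\perp\rangle = \tfrac12\langle g^\perp\otimes\one_\R, V^l(g^\perp\otimes\one_\R)\rangle$ makes rigorous sense. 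Substituting $v = \Lambda_q^{-1}(g^\perp\otimes\one_\R)$ in \eqref{Vinftyff} and multiplying by $\Lambda_q^2$ (and analogously using \eqref{ExpVgammaff}) yields
\[
\langle f^\perp, T_q^\infty f^\perp\rangle = \tfrac{\Lambda_q}{2}\int_K|g^\perp|^2\,dx + \tfrac{1}{2}\big\langle \Gamma_0 v,\, (\partial_N^A - \mathbf{DN}_\Omega)(\Gamma_0 v)\big\rangle_{L^2(\Sigma)},
\]
together with a purely boundary expression of the form $-\tfrac{1}{2}\langle \partial_\Sigma v, (\mathbf{RD}_K - \mathbf{RD}_\Omega)\partial_\Sigma v\rangle_{L^2(\Sigma)}$ for $\langle f^\perp, T_q^\gamma f^\perp\rangle$.

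The bulk term is sandwiched immediately by the inclusions $K_0^\perp\times I_0 \subset K \subset K_1^\perp\times I_1$:
\[
|I_0|\,\langle f^\perp, p_q\one_{K_0^\perp}p_q f^\perp\rangle \leq \int_K|g^\perp|^2\,dx \leq |I_1|\,\langle f^\perp, p_q\one_{K_1^\perp}p_q f^\perp\rangle.
\]
For the boundary term I invoke the results of Sections \ref{S:sec5}--\ref{S:sec6}, where the combinations $\partial_N^A - \mathbf{DN}_\Omega$ and $\mathbf{RD}_\Omega - \mathbf{RD}_K$ are shown to be elliptic pseudo-differential operators on $\Sigma$ of definite sign matching $\varepsilon(l)$, up to a finite-dimensional exceptional subspace. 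Defining $L_q$ as the orthogonal complement in $\ker(H_{\mathrm{Landau}} - \Lambda_q)$ of those $g^\perp$ whose trace on $\Sigma$ falls into that exceptional subspace, supplemented in the Robin case by $\ker(H_K^{-\gamma}-\Lambda_q)$ (whose elements are poles of $\widetilde{R}^\gamma$ absent from $R^\gamma_\Omega$, cf.\ Corollary \ref{Cor:PoleReso}), ellipticity gives a two-sided equivalence of the boundary quadratic form with $\|\Gamma_0(g^\perp\otimes\one_\R)\|_{H^s(\Sigma)}^2$ on $L_q$. Trace continuity together with the inclusion $K_0^\perp \subset \pi(\Sigma) \subset K_1^\perp$ for $\pi$ the vertical projection (valid because $K_0^\perp\times I_0 \subset K \subset K_1^\perp\times I_1$ forces every vertical line through $K_0^\perp$ to meet $\Sigma$ while confining all of $\Sigma$ to $K_1^\perp$) then completes the required sandwich.

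The main obstacle is the ellipticity and sign analysis of the Dirichlet--Neumann and Robin--Dirichlet operators on $\Sigma$, together with the identification of the finite-dimensional exceptional subspace defining $L_q$; this is the substantive content of Sections \ref{S:sec5}--\ref{s7}, and once those ingredients are available the remainder of the argument reduces to the elementary product-set estimates and trace bounds above.
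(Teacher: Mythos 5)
Your overall architecture is the right one and matches the paper's: reduce $\langle f^\perp, T_q^l f^\perp\rangle$ to $\tfrac{\varepsilon(l)}2\langle H_0 u, V^l H_0 u\rangle$ for an explicit solution $u$ of $(H_0-\Lambda_q)u=0$ near $K$ (the paper uses $u=f^\perp\otimes\chi_3$ with a compactly supported cut-off $\chi_3\equiv 1$ on a neighbourhood of $I_K$, which is a cleaner way to handle the fact that $f^\perp\otimes\one_\R\notin L^2$), then feed the decompositions \eqref{eq801}, \eqref{eq811} into it, sandwich the bulk term by the product inclusions, and handle the boundary term with the ellipticity of the DN/RD operators.

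There is, however, a genuine sign gap in your treatment of the \textbf{Dirichlet lower bound}. By Lemma~\ref{lemWT}, the Dirichlet decomposition reads
\[
\langle H_0 f, V^\infty H_0 f\rangle \;=\; \Lambda_q\,\|f_{|K}\|^2_{L^2(K)} \;-\; \langle f_{|\Sigma},\, T_\Sigma^\infty f_{|\Sigma}\rangle_{L^2(\Sigma)},
\]
with $T_\Sigma^\infty={\bf DN}_\Omega-R_q^0$ non-negative (up to finite codimension), so the boundary contribution $\partial_N^A-{\bf DN}_\Omega=-T_\Sigma^\infty$ is \emph{non-positive} — not ``of definite sign matching $\varepsilon(\infty)=+1$'' as you claim. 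Consequently the bulk-plus-boundary split only delivers $\langle H_0 f, V^\infty H_0 f\rangle\geq\Lambda_q\|f_{|K}\|^2 - C\langle f,\one_{K_1}f\rangle$, which can be negative and does not yield the $\geq$ half of \eqref{equivT}. The paper avoids this entirely by proving the Dirichlet lower bound with a \emph{different} decomposition in Proposition~\ref{prop42}: $V^\infty=V_0^\infty+V_1^\infty$, where $V_0^\infty=H_0^{-1}-(H_0+\one_{K_0})^{-1}$ satisfies $H_0 V_0^\infty H_0=\one_{K_0}-\one_{K_0}(H_0+\one_{K_0})^{-1}\one_{K_0}\geq\tfrac12\one_{K_0}$ modulo a finite-codimension subspace, and $V_1^\infty=(H_0+\one_{K_0})^{-1}-(H_\Omega^\infty)^{-1}\oplus 0\geq 0$ by form comparison (the Pushnitski--Rozenblum device of \cite{PuRo07}). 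Your proposal does not anticipate this, and the boundary identity alone cannot supply it.

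A secondary, smaller gap: dispatching the equivalence between $\langle f_{|\Sigma},T_\Sigma f_{|\Sigma}\rangle$ and $\langle f,\one_{K_i}f\rangle$ by ``trace continuity'' is too quick — trace continuity plus interior elliptic regularity gives only the upper direction $\|\Gamma_0 f\|_{H^s(\Sigma)}^2\lesssim\langle f,\one_{K_1}f\rangle$. The reverse estimate $\langle f,\one_{K_0}f\rangle\lesssim\|\Gamma_0 f\|^2_{H^s(\Sigma)}$, needed for the Neumann/Robin lower bound, uses that $f$ solves $(\nabla^A)^2 f+\Lambda_q f=0$ near $K$ so that $f_{|K}$ is controlled by its trace; this is precisely Lemma~\ref{lemTSTo}, which the paper imports from the proof of Lemma~3.14 in \cite{GoKaPe13}, and it is the actual source of the finite-codimension subspace $L_q$. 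Your suggestion that $L_q$ is supplemented by $\ker(H_K^{-\gamma}-\Lambda_q)$ does not quite parse either, since that kernel lives in $L^2(K)$, not in $\ker(H_{\rm Landau}-\Lambda_q)\subset L^2(\R^2)$.
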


The proof of Proposition \ref{propT1} will be given in Section \ref{S:sec6} by introducing elliptic pseudo-differential operators on the boundary $\Sigma = \partial \Omega = \partial K$.

\subsection{Proof of Theorem \ref{theo1}}

\quad First, near $\Lambda_q$, Corollary \ref{Cor:PoleReso} allows to reduce the study of the resonances of $H_{\Omega}^{l}$, $z_{q}(k) = \Lambda_{q} + k^2$, $|k|\leq r_0$ ($r_0$ sufficiently small) to the poles of ${\Tilde R}^l$, because they coincide for $l= \infty$ and for $l=\gamma$ there is only a finite number of eigenvalues of $H^{-\gamma}_K$ near $\Lambda_q$. Then, thanks to Proposition \ref{c8}, modulo a finite set ${\mathcal F}_q$, the  resonances near $\Lambda_q$ are related to the characteristic values of $z  \mapsto I - \frac{A_{q}^{l}(\varepsilon(l)z)}{z}$:
\bel{etape1}
 \{ z_{q}(k) = \Lambda_{q} + k^2 \in \textup{Res}(H_\om^l)\cap B(0,r_0)^*\} \qquad  \qquad \qquad \qquad \qquad \qquad \qquad  \qquad 
\ee
$$ = \big\lbrace z_{q}(k) = \Lambda_{q} + k^2; \;  \textup{ such that } \varepsilon(l) i k \in {\mathcal Z}\big( B(0,r_0)^*,  A_{q}^{l}(\varepsilon(l) \cdot) \big)  \big\rbrace \setminus  {\mathcal F}_q,$$
where $B(0,r_0)^*= \{k \in \C ; \; 0<|k|\leq r_0 \}$ and $A_{q}^{l}$, $l=\infty, \gamma$, is defined by \eqref{defAql}.
%\bel{etape1}
% \sum_{\substack{z_{q}(k) \hspace{0.5mm} \in \hspace{0.5mm} \textup{Res}(H_\om^l) \\ r < \vert k \vert < r_{0}}} \textup{mult} \big( z_{q}(k) \big) 
% = \# \{ k; \; r < \vert k \vert < r_{0} \textup{ such that } \varepsilon(l) i k \textup{ is a characteristic values of } z  \mapsto I - \frac{A_{q}^{l}(\varepsilon(l)z)}{z} \} + O(1)
%\ee

% to analyze the distribution of $z_{q}(k) = \Lambda_{q} + k^2$ such that $\varepsilon(l) i k$ is a characteristic values of
%$
%z  \mapsto I - \frac{A_{q}^{l}(\varepsilon(l)z)}{z},
%$
%%$  \qquad l=\infty, \gamma,$

Then, provided that 
\begin{equation}\label{cd1}
\textup{$I - \varepsilon(l) (A_{q}^{l})'(0)\Pi_{q}$ is an invertible operator}, 
\end{equation}
\big(with $\Pi_{q}$  the orthogonal projection onto $\text{ker}A_{q}^{l}(0)$\big),  \textbf{(ii)} and \textbf{(iii)} of Theorem \ref{theo1}  are immediate consequences of \textbf{(i)} and \textbf{(ii)} of Proposition \ref{P:P1} with $z = \varepsilon(l)ik$, because $A_{q}^{l}(0)$ is non negative.

Let us prove \textbf{(i)} of Theorem \ref{theo1} and interpret \eqref{cd1}. In order to apply  \textbf{(iii)} of Proposition \ref{P:P1}, we analyse the counting function of the eigenvalues of $A_{q}^{l}(0)$. 
According to \eqref{A0T}, 
$n\big( r, A_q^l(0) \big) = n \big( r, T_{q}^{l} \big)$, $T_{q}^{l}=\varepsilon(l) \Lambda_q^2 \, p_{q}W^{l}p_{q}$.
This together with Proposition \ref{propT1}, by using the mini-max principle, implies that for $l=\infty, \gamma$,
\begin{equation}\label{eqP5}
n \big(Cr,p_q \one_{K^\perp_0} p_q\big)
\leq n \big(r, A_q^l(0) \big) \leq 
n \big(r/C, p_q \one_{K^\perp_1} p_q \big).
%\small{\underbrace{\textup{Tr} \hspace{0.6mm} \one_{(r,\infty)} \left( \frac{p_q \one_{K^\perp_0} p_q}{C} \right)}_{=: n_0 (r)} + \mathcal{O}(1) \leq n \big(r, A_q^l(0) \big) \leq \underbrace{\textup{Tr} \hspace{0.6mm} \one_{(r,\infty)} \left( C p_q \one_{K^\perp_1} p_q \right)}_{=: n_1 (r)} + \mathcal{O}(1)}.
\end{equation}
Since $K_{i}$ for $i = 0, 1$ are compact sets (with nonempty interior), then according to  \cite[Lemma 3.5]{RaWa02_01} we have
$n \big(r , p_q \one_{K^\perp_i} p_q \big)=\frac{\vert \ln r \vert}{\ln \vert \ln r \vert} \big( 1 + o(1) \big)$ as $r \searrow 0$. Combining this with \eqref{eqP5}, we deduce
\begin{equation}\label{eqP6}
n \big(r, A_q^l(0) \big) = \frac{\vert \ln r \vert}{\ln \vert \ln r \vert} \big( 1 + o(1) \big), \qquad r \searrow 0.
\end{equation}
We conclude the proof of  \textbf{(i)} of Theorem \ref{theo1} from \eqref{etape1}, \textbf{(iii)} of Proposition \ref{P:P1}, 
 \eqref{eqP6} and the following proposition giving the interpretation of the technical assumption \eqref{cd1} in terms of resonances.

\subsection{Interpretation of the assumption \eqref{cd1}}\label{interpretation}

\quad Using the notations of Subsection \ref{subP2}, let us introduce the operator 
\begin{equation}\label{defPq0}
P_q^0:= p_q \otimes r(0): e^{- \epsilon \langle x_3 \rangle} L^2(\R^3) \longrightarrow e^{\epsilon \langle x_3 \rangle}L^2(\R^3), \quad \epsilon >0,
\end{equation}
whose integral kernel is $\frac12{\mathcal P}_{q,b}(x_\perp,x_\perp^\prime)$ (see \eqref{ker1}).

%The above calculus (in particular \eqref{dec1} and \eqref{dec2}) provides the following relations to be used in Section \ref{interpretation}. 
%
%For $\Im (\Lambda_q + k^2) >0$, $\epsilon >0$ sufficiently small ($\epsilon << |k|$), in  ${\mathcal L} \Big(e^{- \epsilon \langle x_3 \rangle} L^2(\R^3), e^{\epsilon \langle x_3 \rangle}L^2(\R^3)\Big)$, we have:
%\begin{equation}\label{tobeused}
%\frac{ 1}{ik} P_q^0= ik P_q^0 \Big(H_0- (\Lambda_q + k^2)\Big)^{-1}
%= \frac{ik}{\Lambda_q(\Lambda_q + k^2)} P_q^0 \Big(\frac{1}{\Lambda_q + k^2} - H_0^{-1}\Big)^{-1}
%\end{equation}

\begin{proposition}\label{Prop:interpretation}
Let $A_q^l$ be the holomorphic operator-valued function defined by \eqref{defAql} and $\Pi_q$ the orthogonal projection onto Ker$A_q^l(0)$. Then the following assertions are equivalent:

$\textup{\textbf{(i)}}$ $I - \varepsilon(l) (A^l_q)^\prime(0) \Pi_{q}$ is invertible

$\textup{\textbf{(ii)}}$ The following limit exists for $ z $ in a sector $S_\delta := \big\lbrace z \in \C; \; \im (z) > \delta | \Re (z) -\Lambda_q| \big\rbrace $, $\delta >0$:
$$\lim_{S_\delta \ni z \longrightarrow \Lambda_q}    M_l (I+ik P^0_q) 
{\Tilde R}^l(z)
 M^l,$$
where $k=\sqrt{z - \Lambda_q}$, $\Im (k)>0$, $\Re (k) >0$, and ${\Tilde R}^l(z)$ is defined by \eqref{defRNR}.
\end{proposition}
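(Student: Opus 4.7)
The plan is to combine the resolvent identity from Proposition~\ref{p1} with a Grushin (Schur-complement) analysis of $\tilde{S}(k) := ik\,I - \varepsilon(l) A_q^l(ik)$ at $k=0$; note that $\tilde{S}(k) = ik\,\mathcal{B}(k)$, where $\mathcal{B}(k) := I + \varepsilon(l) B^l(z_q(k))$. Specializing \eqref{eqreso} at $z = z_q(k)$ and using $B^l(z_q(k)) = -A_q^l(ik)/(ik)$ from Proposition~\ref{c8}, one obtains
\begin{equation*}
z_q(k)^{2}\, M_l \tilde{R}^l(z_q(k)) M^l + z_q(k)\, M_l M^l = \varepsilon(l)\bigl(I - ik\,\tilde{S}(k)^{-1}\bigr).
\end{equation*}
Combined with the identity $M_l P_q^0 M^l = A_q^l(0)/\Lambda_q^{2}$ and the threshold expansion $R_0(z_q(k)) = -P_q^0/(ik) + R_{q,\perp}(z_q(k))$, with $R_{q,\perp}$ holomorphic at $k=0$, a parallel computation yields an analogous formula for $ik\,M_l P_q^0 \tilde{R}^l(z_q(k))M^l$. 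Summing the two, $M_l(I + ik P_q^0)\tilde{R}^l(z_q(k))M^l$ becomes an explicit analytic expression in $\tilde{S}(k)^{-1}$; the $(I + ik P_q^0)$ prefactor is tailored to cancel the residual $\Pi_q$-supported contribution, so that the resulting quantity has a finite limit as $k \to 0$ precisely when $ik\,\tilde{S}(k)^{-1}$ does.

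Next I Taylor-expand $\tilde{S}(k) = -\varepsilon(l) A_q^l(0) + ik\bigl[I - \varepsilon(l)(A_q^l)'(0)\bigr] + O(k^2)$ and split the ambient Hilbert space as $\Pi_q L^2(\R^3) \oplus \Pi_q^{\perp} L^2(\R^3)$. Because $A_q^l(0)$ vanishes on $\Pi_q L^2$ and is invertible on $\Pi_q^{\perp} L^2$, the $(\Pi_q^\perp,\Pi_q^\perp)$-block of $\tilde{S}(k)$ is invertible for small $k$; its Schur complement on $\Pi_q L^2$ equals
\begin{equation*}
\Sigma(k) = ik\,\Pi_q\bigl[I - \varepsilon(l)(A_q^l)'(0)\bigr]\Pi_q + O(k^2).
\end{equation*}
A short block-triangular argument shows that $\Sigma(k)$ is invertible for small $k \neq 0$ with $\Sigma(k)^{-1} = O(1/k)$ if and only if $\Pi_q[I - \varepsilon(l)(A_q^l)'(0)]\Pi_q$ is invertible on $\Pi_q L^2$, which is in turn equivalent to assertion (i), namely the invertibility of $I - \varepsilon(l)(A_q^l)'(0)\Pi_q$ on $L^2(\R^3)$.

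Under (i), the block-inverse formula gives $ik\,\tilde{S}(k)^{-1} = \Pi_q \Xi^{-1}\Pi_q + O(k)$ with $\Xi := \Pi_q[I - \varepsilon(l)(A_q^l)'(0)]\Pi_q$; inserted into the identity of the first paragraph, this yields the limit claimed in (ii), the sector condition $z \in S_\delta$ ensuring a non-tangential approach that controls the $O(k^2)/k$ remainders. Conversely, if (i) fails then $\Sigma(k)$ has a higher-order zero at $k=0$, $ik\,\tilde{S}(k)^{-1}$ diverges, and since the $(I+ikP_q^0)$ prefactor only absorbs the simple-pole contribution supported in $\Pi_q$, the quantity $M_l(I+ikP_q^0)\tilde{R}^l(z_q)M^l$ cannot converge, so (ii) fails. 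The main obstacle is establishing the precise cancellation identity behind the first paragraph: one must verify that the $\Pi_q$-supported leading singular part of $M_l\tilde{R}^l(z_q)M^l$ is matched exactly by the contribution coming from $ikM_lP_q^0\tilde{R}^l(z_q)M^l$, a computation that rests ultimately on the threshold expansion of $R_0$ together with the relation $M_lP_q^0M^l = A_q^l(0)/\Lambda_q^{2}$.
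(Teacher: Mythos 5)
Your overall strategy---derive from \eqref{eqreso} a formula expressing $M_l(I+ikP_q^0)\tilde R^l(z)M^l$ through $ik\,\tilde S(k)^{-1}$, then analyse $ik\,\tilde S(k)^{-1}$ by a Grushin/Schur-complement reduction relative to $\Pi_q L^2\oplus\Pi_q^\perp L^2$---is a genuine alternative to the paper's route, which instead proves a multiplicative factorization (Lemma~\ref{equivA}) $I-\varepsilon(l)\frac{A_q^l(ik)}{ik}=(I-K(k))\big(I-\varepsilon(l)\frac{A_q^l(0)(\Lambda_q+k^2)}{ik\Lambda_q}\big)$ together with the explicit algebraic identity \eqref{eqA5}--\eqref{eqA6}. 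However, two steps in your argument are not justified as written and one is acknowledged as missing.

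First, the claim that ``$A_q^l(0)$ \ldots is invertible on $\Pi_q^\perp L^2$'' is false: $A_q^l(0)=\Lambda_q^2M_lP_q^0M^l$ is a nonnegative \emph{compact} operator with infinitely many nonzero eigenvalues accumulating at $0$ (this is the whole point of Proposition~\ref{propT1} and \eqref{eqP6}), so its restriction to the orthocomplement of its kernel is injective with dense range but has unbounded inverse, and $0$ remains in its spectrum. Consequently the invertibility of the block $\tilde S^{\perp\perp}(k)$ for small $k$ is not a consequence of the statement you give; it must be argued separately (for instance using that, in the sector, $\varepsilon(l)ik$ stays at distance $\gtrsim |\Re k|\gtrsim|k|$ from the spectrum of $A_q^l(0)$, which is the content of the estimate \eqref{borne1} that the paper proves).

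Second, the error estimate $\Sigma(k)=ik\,\Pi_q[I-\varepsilon(l)(A_q^l)'(0)]\Pi_q+O(k^2)$ is not correct as stated. The off-diagonal blocks $\tilde S^{q\perp}(k),\tilde S^{\perp q}(k)$ are $O(k)$, but $[\tilde S^{\perp\perp}(k)]^{-1}$ has norm of size $1/|k|$ (again because $0$ is in the closure of $\sigma(A_q^l(0))$), so the naive estimate of the correction $\tilde S^{q\perp}[\tilde S^{\perp\perp}]^{-1}\tilde S^{\perp q}$ is $O(k)$---the \emph{same order} as the leading term, not $O(k^2)$. Showing that this correction is in fact $o(k)$, so that it does not alter the coefficient of $ik$, requires exactly the compactness-plus-spectral-cutoff argument that the paper runs in Lemma~\ref{equivA} through the estimates \eqref{borne2}--\eqref{borne3} with the intermediate scale $\nu(k)$ ($|k|=o(\nu(k))$, $\nu(k)=o(1)$): one splits $[\tilde S^{\perp\perp}(k)]^{-1}$ over $\one_{[\nu(k),\infty)}(A_q^l(0))$ and $\one_{(0,\nu(k))}(A_q^l(0))$, uses the $1/\nu(k)$ bound on the first piece, and the compactness of $(A_q^l)'(0)$ together with $s$-$\lim\one_{(0,\nu(k))}(A_q^l(0))=0$ on the second. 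Without that input the Schur complement expansion is not established.

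Third, the identity underlying the first paragraph---that $M_l(I+ikP_q^0)\tilde R^l(z_q(k))M^l$ reduces to an analytic expression in $ik\,\tilde S(k)^{-1}$ whose singular part is exactly cancelled by the $(I+ikP_q^0)$ prefactor---is asserted but not derived (you flag this yourself at the end). In the paper this is a short but essential piece of algebra: one uses $M^lM_l=\varepsilon(l)V^l=\varepsilon(l)(H_0^{-1}-\tilde R^l(0))$ and $P_q^0H_0^{-1}=\Lambda_q^{-1}P_q^0$ to pass from $\big(I-\varepsilon(l)\frac{\Lambda_q(\Lambda_q+k^2)}{ik}M_lP_q^0M^l\big)\big(I-\varepsilon(l)M_l(\tfrac1z-\tilde R^l(0))^{-1}M^l\big)$ to $I-\varepsilon(l)M_l(I+ikP_q^0)(zI+z^2\tilde R^l(z))M^l$; see \eqref{eqA5}--\eqref{eqA6}. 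This computation is precisely what ties the abstract invertibility condition (i) to the boundary-value quantity in (ii), and omitting it leaves the proof without its central link.
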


Let us recall that $\textup{\textbf{(i)}}$ is the assumption  \eqref{cd1}, and before to prove the above result let us give an interpretation of $\textup{\textbf{(ii)}}$. From 
\eqref{dec1} and \eqref{dec2}, $z=\Lambda_q$ is an essential singularity of $M_l  ({H_0}-z)^{-1} M^l$ given by
$$
M_l ({H_0}-z)^{-1} M^l = -\frac{1}{ik}  M_l P^0_q M^l + \hbox{Hol}_{\Lambda_q}(z),
$$
where $\hbox{Hol}_{\Lambda_q}$ is a holomorphic operator valued function near $z=\Lambda_q$, given by 
$$
\hbox{Hol}_{\Lambda_q}(z) =
M_l (I+ ik P^0_q) ({H_0}-z)^{-1} M^l.
$$
For the above formula, we have used that in ${\mathcal L}\Big(e^{- \epsilon \langle x_3 \rangle} L^2(\R^3), e^{\epsilon \langle x_3 \rangle}L^2(\R^3)\Big)$
$$M_l P^0_q ({H_0}- ( \Lambda_q+k^2)  )^{-1} M^l= M_l P^0_q (\Lambda_q- ( \Lambda_q+k^2 ) )^{-1} M^l= -\frac{1}{k^2}  M_l P^0_q M^l. $$
Under obstacle perturbation, our main result shows  that $z=\Lambda_q$ remains an essential singularity. But it is not excluded that $\Lambda_q$ becomes also an isolated singularity coming from the perturbation of the holomorphic part $\hbox{Hol}_{\Lambda_q}(z)$. 

Our assumption \eqref{cd1} which is equivalent to $\textup{\textbf{(ii)}}$ does not allow this possibility. It is reasonable to think that \eqref{cd1} is  generic, for example in the sense that if $z=\Lambda_q$ becomes a isolated singularity of $M_l (I+ik P^0_q)  {\Tilde R}^l(z) M^l$ then, under a small perturbation of the obstacle $K$, this singularity disappears. In particular, for $l=\gamma$, among the possible singularities of $M_l (I+ik P^0_q)  {\Tilde R}^l(z) M^l$ there are the eigenvalues of the interior operator $H^{-\gamma}_K$ which has a discrete spectrum. Although it seems to be an open question,  we hope that if $\Lambda_q$ is an isolated eigenvalue of $H^{-\gamma}_K$, then under a small perturbation of $K$, this eigenvalue moves to another value close to  (but different from) $\Lambda_q$.

In order to simplify the statement of Theorem \ref{theo1}, let us introduced the following definition.
\begin{definition}\label{defnreso}
We will say that the obstacle $K$ doesn't produce an isolated resonance at $\Lambda_q$ if the property $\textup{\textbf{(ii)}}$ of Proposition \ref{Prop:interpretation} is satisfied.
\end{definition}

\begin{lemma}\label{equivA}
Near $k=0$, $\Im (k) >0$, $\Re (k) >0$, there exists a holomorphic  operator-valued function $K$ such that:
$$\left( I - \varepsilon(l) \frac{A_q^l(ik)}{ik} \right) = \Big( I - K(k) \Big) \, \Big( I - \varepsilon(l) \frac{\Lambda_q (\Lambda_q + k^2)}{ik}  M_l P^0_q M^l \Big)$$
$$\lim_{\substack{k \rightarrow 0 \\ \Re (k) > \delta \, \Im (k) >0} }  \! \! (I - K(k)) = I - \varepsilon(l) (A_{q}^{l})'(0)\Pi_{q},$$
where $\delta >0$ is fixed.

%\textup{ is not invertible }
%$\textup{\textbf{(ii)}}$ $I - \varepsilon(l) (A^l_q)^\prime(0) \Pi_{q}$ is invertible

\end{lemma}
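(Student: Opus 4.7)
The plan is to make the announced factorization explicit by isolating the $1/(ik)$-singular contribution of $A_q^l(ik)/(ik)$. First I would write $r(ik) = r(0) + ik\,\tilde r(k)$, where $\tilde r(k)$ has integral kernel $(e^{ik|x_3 - x_3'|} - 1)/(2ik)$, extends to a holomorphic operator-valued function from $e^{-\epsilon\langle x_3\rangle}L^2(\R)$ to $e^{\epsilon\langle x_3\rangle}L^2(\R)$, and satisfies $\tilde r(0) = r'(0)$. Inserting this into \eqref{defAql} and using the elementary identity $\frac{z_q(k)^2}{ik} = \frac{\Lambda_q z_q(k)}{ik} + \frac{k\, z_q(k)}{i}$, a direct computation yields
\begin{equation*}
\frac{A_q^l(ik)}{ik} = \frac{\Lambda_q z_q(k)}{ik}\,M_l P_q^0 M^l + \tilde G(k),
\end{equation*}
where $\tilde G(\cdot)$ is holomorphic near $0$ and $S_\infty$-valued. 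Moreover, a direct difference-quotient calculation (using $\tilde r(0) = r'(0)$) shows $\tilde G(0) = (A_q^l)'(0)$. Setting $U(k) := \varepsilon(l)\frac{\Lambda_q z_q(k)}{ik}M_l P_q^0 M^l$, the desired factorization $I - \varepsilon(l)A_q^l(ik)/(ik) = (I - K(k))(I - U(k))$ is, by expansion, equivalent to $K(k)(I - U(k)) = \varepsilon(l)\tilde G(k)$, which forces
\begin{equation*}
K(k) := \varepsilon(l)\tilde G(k)\bigl(I - U(k)\bigr)^{-1}.
\end{equation*}

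The second step is to justify this definition, i.e.\ show that $(I - U(k))^{-1}$ exists, is holomorphic, and is uniformly bounded on the sector $\Re(k) > \delta\Im(k) > 0$. The operator $T_0 := M_l P_q^0 M^l$ is non-negative and compact on $L^2(\R^3)$ (this is the content of the proof of Proposition \ref{c8}), hence $\sigma(T_0)\subset[0,+\infty)$. A short polar computation of $w(k) := \varepsilon(l)\Lambda_q z_q(k)/(ik)$ shows that for $k$ in the sector, $|w(k)|\to\infty$ while $\arg w(k)$ is bounded strictly away from $0$ (it approaches $\pm\pi/2$ depending on $\varepsilon(l)$). Thus $w(k)^{-1}$ avoids $\sigma(T_0)$, and the elementary lower bound $\inf_{\lambda\geq 0}|1 - w\lambda| \geq \min(1, |\sin\arg w|)$, combined with the spectral theorem for $T_0$, gives $\|(I - U(k))^{-1}\| \leq C$ uniformly on the sector. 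Holomorphy of $K(k)$ and the factorization identity then follow immediately from the algebraic construction.

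Finally, I pass to the limit $k\to 0$ inside the sector. Since $U(k)$ vanishes on $\ker T_0 = \ker A_q^l(0) = \Ran\Pi_q$, the inverse acts as the identity there. On $(\ker T_0)^\perp$, for each eigenvalue $\lambda_j > 0$ of $T_0$ one has $(1 - w(k)\lambda_j)^{-1}\to 0$ because $|w(k)|\to\infty$ and $\arg w(k)$ stays away from $0$; uniform boundedness then upgrades this to strong operator convergence $(I - U(k))^{-1}\to\Pi_q$. Since $\tilde G(\cdot)$ is compact-valued and norm-continuous, the standard fact that composition of a norm-convergent family of compact operators with a uniformly bounded strongly convergent family is norm-continuous yields
\begin{equation*}
K(k) \longrightarrow \varepsilon(l)\tilde G(0)\Pi_q = \varepsilon(l)(A_q^l)'(0)\Pi_q
\end{equation*}
in operator norm, which is exactly the announced limit. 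The main obstacle is precisely this last step: because the eigenvalues of $T_0$ accumulate at $0$, one does \emph{not} have norm convergence of $(I - U(k))^{-1}$ to $\Pi_q$, and the argument must crucially exploit the compactness of $\tilde G(k)$ (inherited from $V^l$ through Lemma \ref{lemVM}) in order to convert strong operator convergence into norm convergence of the product.
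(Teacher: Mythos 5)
Your proof is correct and follows essentially the same strategy as the paper: isolate the $1/(ik)$-singular part $\varepsilon(l)\frac{\Lambda_q z_q(k)}{ik}M_lP_q^0M^l$ (the paper writes it as $\varepsilon(l)\frac{A_q^l(0)(\Lambda_q+k^2)}{ik\Lambda_q}$, which is identical since $A_q^l(0)=\Lambda_q^2 M_lP_q^0M^l$), define $K(k)$ as the regular part composed with the inverse of $I$ minus the singular part, prove uniform boundedness of that inverse on the sector, and upgrade strong convergence $(I-U(k))^{-1}\to\Pi_q$ to norm convergence of $K(k)$ by exploiting compactness of the regular part. The only cosmetic difference is that the paper carries out the strong-to-norm upgrade concretely via spectral projections $\one_{[\nu(k),\infty)}(A_q^l(0))$ and $\one_{]0,\nu(k)[}(A_q^l(0))$ with an intermediate scale $\nu(k)$, whereas you invoke the abstract fact that a norm-continuous compact-valued family times a uniformly bounded strongly convergent family converges in norm — these are the same argument.
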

\begin{proof}
The proof of this Lemma follows similarly to that of Proposition 3.6 (or of Lemma 4.1) of \cite{BoBrRa14_01} where the same assumption \eqref{cd1} appears. We have 
$$
\frac{A_q^l(ik)}{ik} = \frac{A_q^l(0)(\Lambda_q + k^2)}{ik\Lambda_q} + \frac{A_q^l(ik)-\frac{\Lambda_q + k^2}{\Lambda_q}A_q^l(0) }{ik}
= \frac{A_q^l(0)(\Lambda_q + k^2)}{ik\Lambda_q} + (A_{q}^{l})'(0) + k R_2^l(ik),
$$
where $R_2$ is a holomorphic  operator valued function near $k=0$. Then, since $A_q^l(0)$ is self-adjoint, for $ik \in \C\setminus \R$, we have:
\bel{eqA1}
\left( I - \varepsilon(l) \frac{A_q^l(ik)}{ik} \right) = \Big( I - K(k) \Big) \, \left( I - \varepsilon(l) \frac{A_q^l(0)(\Lambda_q + k^2)}{ik\Lambda_q} \right),
\ee
with 
$$K(k) = \varepsilon(l)  (A_{q}^{l})'(0) \left(  I - \varepsilon(l) \frac{A_q^l(0)(\Lambda_q + k^2)}{ik\Lambda_q} \right)^{-1} + 
 \varepsilon(l)k R_2^l(ik) \left(  I - \varepsilon(l) \frac{A_q^l(0)(\Lambda_q + k^2)}{ik\Lambda_q} \right)^{-1}.$$

For $\Re (k) > \delta \, \Im (k) >0$, $|k|$ sufficiently small and $\nu(k)>0$ such that $\nu(k)= o(1)$, $|k|=o(\nu(k))$ as $|k|$ tends to $0$, we have:
\bel{borne1}
\|  \Big(  I - \varepsilon(l) \frac{A_q^l(0)(\Lambda_q + k^2)}{ik\Lambda_q} \Big)^{-1} \| = \sup_{\lambda_j \in \sigma(\varepsilon(l)A_q^l(0))} \!\frac{\Lambda_q |k|}{| ik\Lambda_q - \lambda_j(\Lambda_q + k^2)|} \leq \frac{ |k|}{| \Re (k) |} \leq \sqrt{1 + \frac{1}{\delta^2}},
\ee
\bel{borne2}
\| \one_{[\nu(k) , + \infty [}(A_q^l(0)) \Big(  I - \varepsilon(l) \frac{A_q^l(0)(\Lambda_q + k^2)}{ik\Lambda_q} \Big)^{-1} \| \leq \frac{ |k|}{\nu(k) - |\im  k |} \leq \frac{ |k|/\nu(k)}{1 - | k |/\nu(k)},
\ee
\bel{borne3}
 s\!\!-\!\!\!\lim_{|k| \rightarrow 0}  \one_{]0, \nu(k)[}(A_q^l(0)) = 0.
\ee
Then, combining the compactness of $(A_{q}^{l})'(0)$ with \eqref{borne3} and \eqref{borne1},  we obtain:
$$\lim_{\substack{k \rightarrow 0 \\ \Re (k) > \delta \, \Im (k) >0} }  \! \! (A_{q}^{l})'(0)  \one_{]0, \nu(k)[}(A_q^l(0))  \Big(  I - \varepsilon(l) \frac{A_q^l(0)(\Lambda_q + k^2)}{ik\Lambda_q} \Big)^{-1} =0,$$
and from  \eqref{borne2} and  \eqref{borne1}, we deduce
\bel{eqA2}
\lim_{\substack{k \rightarrow 0 \\ \Re (k) > \delta \,  \Im (k) >0} }  \! \! (I - K(k)) = 
%I - \varepsilon(l) (A_{q}^{l})'(0)\Pi_{q}\Big(  I - \varepsilon(l) \frac{A_q^l(0)(\Lambda_q + k^2)}{ik\Lambda_q} \Big)^{-1}= 
 I - \varepsilon(l) (A_{q}^{l})'(0)\Pi_{q}.
\ee
This concludes the proof of Lemma \ref{equivA} by using the relations \eqref{eqA1}, \eqref{eqA2}, \eqref{opA_0} and \eqref{defPq0}.
\end{proof}

{\it Proof of Proposition \ref{Prop:interpretation}}
By definition of $A_q^l$, for $z \in S_\delta = \big\lbrace z \in \C; \; \im (z) > \delta | \Re (z)- \Lambda_q  | \big\rbrace $, $\delta >0$, and $k=\sqrt{z - \Lambda_q}$, $\Im (k)>0$, $\Re (k) >0$, we have
$$\left( I - \varepsilon(l) \frac{A_q^l(ik)}{ik} \right)= \left( I  + \varepsilon(l) M_l \Big(\frac{1}{z} - H_0^{-1}\Big)^{-1} M^l\right).$$
Then from \eqref{eqreso} and Lemma \ref{equivA}, we deduce, for $z= \Lambda_q + k^2 \in S_\delta$:
\bel{eqA4}
\Big( I - K(k) \Big) \, \Big( I - \varepsilon(l) \frac{\Lambda_q (\Lambda_q + k^2)}{ik}  M_l P^0_q M^l \Big) \,\, \Big( I  -  \varepsilon(l) M_l  \Big(\frac{1}{z} -{\Tilde R}^l(0)\Big)^{-1} M^l\Big) = I .
\ee
By exploiting that $M^lM_l= \varepsilon(l) V^l = \varepsilon(l) (H_0^{-1} -{\Tilde R}^l(0))$ \big(see \eqref{decompositionV}, \eqref{defVr}, \eqref{defVd}, \eqref{defRNR} \big), we have:
\bel{eqA5}
\left( I - \varepsilon(l) \frac{\Lambda_q (\Lambda_q + k^2)}{ik}  M_l P^0_q M^l \right) \,\, \left( I  -  \varepsilon(l) M_l  \left(\frac{1}{z} -{\Tilde R}^l(0)\right)^{-1} M^l\right) 
\ee
$$= I-\varepsilon(l) M_l  \left(\frac{1}{z} -{\Tilde R}^l(0)\right)^{-1} M^l
+ \varepsilon(l)   \frac{\Lambda_q (\Lambda_q + k^2)}{ik}  M_l P^0_q \left(H_0^{-1} - \frac1{z}\right) \left(\frac{1}{z} -{\Tilde R}^l(0)\right)^{-1}M^l.$$
Using that, in ${\mathcal L}\Big(e^{- \epsilon \langle x_3 \rangle} L^2(\R^3), e^{\epsilon \langle x_3 \rangle}L^2(\R^3)\Big)$,  $P^0_q H_0^{-1} = (\Lambda_q)^{-1} P^0_q$, \eqref{eqA5}  equals 
\bel{eqA6}
I -\varepsilon(l) M_l  (I + ik P^0_q) \Big(\frac{1}{z} -{\Tilde R}^l(0)\Big)^{-1}M^l= 
I -\varepsilon(l) M_l  (I + ik P^0_q) \Big(zI + z^2 {\Tilde R}^l(z)\Big) M^l
 \ee
 whose limit as $S_\delta \ni z \longrightarrow \Lambda_q$ exists if and only if 
\bel{eqA7}
\lim_{S_\delta \ni z \longrightarrow \Lambda_q}    M_l (I+ik P^0_q) {\Tilde R}^l(z) M^l
%= \lim_{S_\delta \ni z \longrightarrow \Lambda_q}    M_l (I+ik P^0_q) \Big( ({H}^\gamma_\Omega-z)^{-1}\oplus ({H}^{-\gamma}_K-z)^{-1}    \Big) M^l
\ee
exists.

Then Proposition \ref{Prop:interpretation} follows from \eqref{eqA2}, \eqref{eqA4}, \eqref{eqA5}, \eqref{eqA6} and \eqref{eqA7}.

\begin{remark}
In Lemma \ref{equivA}, the limit is for $\arg (k) \in (0, \frac{\pi}{2}-\theta_\delta)$ with $\theta_\delta = \arctan \delta$, and then, in Proposition  \ref{Prop:interpretation}, it is sufficient to take $S_\delta = \big\lbrace z \in \C; \; \arg (z- \Lambda_q) \in (0, \pi-2\theta_\delta) \big\rbrace$.
\end{remark}

\section{Reduction to Toeplitz operators with symbol supported near the obstacle}\label{S:sec5}

\quad In this section we will prove Proposition \ref{propT1}. To the operators $V^l$, $l=\infty, \gamma$ defined on $L^2(\R^3)$ by \eqref{defVr}, \eqref{defVd} we associate the operator 
\bel{defWl}
W^l:= \frac{1}2 \int_{\R_{x_3}} V^l dx_3,
\ee
 defined on $L^2(\R^2)$ by \eqref{id4}
%$$(W^l f^\perp) (x_1,x_2)= \frac{1}2 \int_{\R_{x_3}} \Big(V^l (f^\perp \otimes 1 ) \Big)(x_1,x_2, x_3) dx_3$$
%where $(f^\perp\otimes 1)(x_1,x_2, x_3)= f^\perp(x_1,x_2)$ is constant with respect to $x_3$. Thanks to the properties of $V^l$ (see Lemma \ref{lemVM}), the above integral exists and define a compact operator on $L^2(\R^2)$. 

Our goal is to study the counting function of the compact non negative operators 
$$T_q^l:=   \Lambda_q^2\,  p_q W^l p_q, \qquad l=\infty, \gamma,$$
 where $p_q$ is the orthogonal projection onto ker$(H_{\text{Landau}}-\Lambda_q)$.
% and 
% \bel{defe(l)}
% \varepsilon (l) := \left\{  \begin{array}{ccc}  1 & \hbox{ for } & l= \infty\\  - 1 & \hbox{ for } & l=\gamma  \end{array} \right.
% \ee

%\begin{proposition}\label{propT1}
%For all $q \in \N$, there exists $L_q$ a finite codimension subspaces of Ker$(H_{\text{Landau}}-\Lambda_q)$ and there exists $C>1$, such that for compacts domains $K^\perp_0 \subset  K^\perp_1 \subset \R^2$ and compact intervals $I_0\subset I_1$ satisfying $K^\perp_0 \times I_0 \subset  K \subset K^\perp_1\times I_1$ with $\partial (K^\perp_i \times I_i) \cap \partial K= \emptyset$, $i=0,1$, we have for any $f^\perp \in L_q$,  and $ l=\infty, \gamma$, 
%\bel{equivT}
%\frac1{C} \langle f^\perp , P_q {\bf 1}_{K^\perp_0} P_q f^\perp \rangle_{L^2(\R^2)} \leq  \langle f^\perp, T_q^l f^\perp\rangle_{L^2(\R^2)} 
%\leq C  \langle f^\perp , P_q {\bf 1}_{K^\perp_1} P_q f^\perp \rangle_{L^2(\R^2)} .
%\ee
%
%\end{proposition}

First, we study properties of $V^l$ in $L^2(\R^3)$.
For $q \in \N$, let us introduce a compact domain $K_1 \subset \R^3$ which contains $K$ and 
\bel{defEq}
\Ec_q (K_1)= \big\lbrace f \in L^2(\R^3) \cap C^\infty(\R^3); \; (H_0-\l_q)f= 0 \hbox{ on } K_1\big\rbrace.
\ee
It is an infinite dimensional subspace of $L^2(\R^3)$ which contains all functions $(P_q f^\perp \otimes \chi_3)$ when $ \chi_3 \in  L^2(\R_{x_3}) \cap C^\infty(\R_{x_3})$ satisfies $D^2_{x_3} \chi_3= 0$ on $I_{K_1}$, defined as for $K$ by \eqref{defIK}.

\begin{proposition}\label{prop42}
Fix $K_0$, $K_1$ two compact domains of $\R^3$, $K_0 \subset K \subset K_1$ with $\partial K_i \cap \partial K= \emptyset$, $i=0,1$. For $l=\infty, \gamma$, there exists $\Lc_q$ a finite codimension subspaces of $\Ec_q(K_1)$  and $C>1$ such that for any $f \in \Lc_q$,
\bel{equivV}
\frac1{C} \langle f ,  \one_{K_0} f \rangle_{L^2(\R^3)} \leq  \langle H_0 f, \varepsilon(l)V^l H_0 f \rangle_{L^2(\R^3)} 
\leq C \langle f ,  \one_{K_1} f \rangle_{L^2(\R^3)}  , \qquad l=\infty, \gamma.
\ee
\end{proposition}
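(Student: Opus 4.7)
The approach is to combine the boundary representations \eqref{eq801}--\eqref{eq811} of the quadratic forms $\langle H_0 v, V^l H_0 v\rangle$ with interior elliptic regularity for solutions of $(H_0-\Lambda_q)f=0$ in $K_1$, and with the ellipticity of the Dirichlet--Neumann and Robin--Dirichlet operators on $\Sigma$ that will be established in Sections \ref{S:sec6}--\ref{s7}.

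First, for $f\in\Ec_q(K_1)$ sufficiently regular and decaying (so that $v:=H_0^{-1}(H_0 f)$ is well defined and equals $f$), substitute $v=f$ in \eqref{eq801}--\eqref{eq811}. Since $(H_0-\Lambda_q)f=0$ on $K_1\supset K$, we have $(H_0 f)_{|K}=\Lambda_q f_{|K}$, hence
\begin{align*}
\langle H_0 f, V^\infty H_0 f\rangle &= \Lambda_q\,\|f\|^2_{L^2(K)} + \bigl\langle \Gamma_0 f_{|\om},\,(\dbdN-\mathbf{DN}_\Omega)\Gamma_0 f_{|\om}\bigr\rangle_{L^2(\Sigma)},\\
-\langle H_0 f, V^\gamma H_0 f\rangle &= \bigl\langle \dbdR f,\,(\mathbf{RD}_K-\mathbf{RD}_\Omega)\dbdR f\bigr\rangle_{L^2(\Sigma)}.
\end{align*}
This reduces \eqref{equivV} to establishing two-sided bounds of the boundary quadratic forms on the right in terms of $\|f\|^2_{L^2(K_0)}$ and $\|f\|^2_{L^2(K_1)}$ for $f$ in a finite-codimension subspace $\Lc_q\subset\Ec_q(K_1)$.

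The second step invokes the ellipticity of $\dbdN-\mathbf{DN}_\Omega$ and $\mathbf{RD}_K-\mathbf{RD}_\Omega$ on $\Sigma$, to be proved in Sections \ref{S:sec6}--\ref{s7}. These are first-order pseudo-differential operators with principal symbol of constant sign, so that, modulo finite-dimensional kernels, they satisfy coercivity estimates of the form
$$ \frac{1}{C'}\,\|\psi\|^2_{H^{1/2}(\Sigma)} \le \bigl|\langle\psi,(\dbdN-\mathbf{DN}_\Omega)\psi\rangle_{L^2(\Sigma)}\bigr| \le C'\,\|\psi\|^2_{H^{1/2}(\Sigma)}, $$
and analogously for $\mathbf{RD}_K-\mathbf{RD}_\Omega$; the finite-dimensional complements are absorbed into $\Lc_q$. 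The third step transfers these boundary bounds into interior $L^2$ bounds: interior elliptic regularity for $(H_0-\Lambda_q)f=0$ on $K_1$, applied on a collar neighbourhood of $\Sigma$ contained in $K_1\setminus\overline{K_0}$, combined with the trace theorem, gives $\|\Gamma_0 f\|_{H^{1/2}(\Sigma)}+\|\dbdR f\|_{H^{-1/2}(\Sigma)}\le C\|f\|_{L^2(K_1)}$; this together with Step~2 yields the upper bound in \eqref{equivV}. The matching lower bound follows from a unique-continuation / three-sphere type inequality for the elliptic operator $H_0-\Lambda_q$: modulo a finite-codimension subspace, the Cauchy data $(\Gamma_0 f,\dbdN f)$ on $\Sigma$ control $\|f\|_{L^2(K_0)}$.

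The main obstacle is this last lower bound, which must be made compatible with the finite-codimensional defects coming from (i) the (co)kernel of the boundary pseudo-differential operators, and (ii) the possibility that some nonzero $f\in\Ec_q(K_1)$ has very small Cauchy data on $\Sigma$. Both defects are finite-dimensional thanks to the ellipticity of the boundary operators and to the unique continuation property of $H_0-\Lambda_q$, and their common complement in $\Ec_q(K_1)$ defines the finite-codimension subspace $\Lc_q$ on which \eqref{equivV} holds.
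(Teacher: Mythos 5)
Your overall strategy (rewrite the quadratic forms via the boundary identities \eqref{eq801}--\eqref{eq811}, exploit ellipticity of the boundary pseudo-differential operators to get $H^{1/2}(\Sigma)$-coercivity modulo finite codimension, then transfer to interior $L^2$ bounds via trace estimates and interior elliptic regularity) matches the paper's route through Lemma~\ref{lemWT} and Lemma~\ref{lemTSTo}. Two caveats: first, $\dbdN - \mathbf{DN}_\Omega$ is not, as written, an operator on $\Gamma_0 f|_\om$ alone; for $f\in\Ec_q(K_1)$ one must first express $\dbdN f$ as $R^0_q(f|_\Sigma)$ plus a finite-rank term, where $R^0_q$ is the elliptic pseudo-differential operator of Lemma~\ref{lemRq} built from the interior Green function of $(\nabla^A)^2+\Lambda_q$; this step is needed before ``coercivity of the boundary form'' has a meaning.

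Second, and more significantly, your treatment of the \emph{Dirichlet lower bound} diverges from the paper and is where a genuine gap appears. You invoke a unique-continuation / three-sphere type inequality to claim that the Cauchy data of $f$ on $\Sigma$ controls $\|f\|_{L^2(K_0)}$ modulo finite codimension, and that this yields the lower bound in \eqref{equivV}. But the Dirichlet form is $\Lambda_q\|f\|_{L^2(K)}^2 - \langle f_\Sigma, T^\infty_\Sigma f_\Sigma\rangle$, a difference of two nonnegative quantities, and it is not clear a priori how to make the boundary-coercivity argument control $\|f\|_{L^2(K_0)}$ without also controlling $\|f\|_{L^2(K)}$; nothing in your sketch rules out a sequence with $\|f_n\|_{L^2(K_0)}=1$, the $V^\infty$-form tending to $0$, and $\|f_n\|_{L^2(K)}\to\infty$. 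The three-sphere/unique-continuation appeal is too vague to close this. The paper avoids the issue entirely: following \cite{PuRo07}, it decomposes $V^\infty = V^\infty_0 + V^\infty_1$ with $V^\infty_0 = H_0^{-1}-(H_0+\one_{K_0})^{-1}$ and $V^\infty_1=(H_0+\one_{K_0})^{-1}-(H^\infty_\om)^{-1}\oplus 0\ge 0$, then uses the algebraic identity
$$H_0\,V^\infty_0\,H_0=\one_{K_0}-\one_{K_0}(H_0+\one_{K_0})^{-1}\one_{K_0}$$
and compactness of $\one_{K_0}(H_0+\one_{K_0})^{-1}\one_{K_0}$ to get $H_0 V^\infty_0 H_0\ge \tfrac12\one_{K_0}$ on a finite-codimension subspace, giving the Dirichlet lower bound directly. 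You should either adopt this more elementary argument or supply a precise quantitative Cauchy-data-to-interior estimate together with a justification that the deficiency is genuinely finite-dimensional.
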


\begin{proof}
The proof of the lower bound in the Dirichlet case is inspired by the analog result in the 2D case (see Proposition 3.1 of \cite{PuRo07}). By introducing the operator $H_0 + \one_{K_0}$, we have
$$V^\infty:= H_0^{-1} - (H_\om^\infty )^{-1} \oplus 0 = V^\infty_0 + V^\infty_1$$ with 
$$V^\infty_0 = H_0^{-1} - (H_0 + \one_{K_0})^{-1} , \qquad V^\infty_1=  (H_0 + \one_{K_0})^{-1}  - (H_\om^\infty )^{-1} \oplus 0.$$
Since the quadratic form associated to $(H_0 + \one_{K_0})$ coincide with $Q^\infty_\om$ \big(see \eqref{q3}\big) on $C_0^\infty(\om)$ \big(identified with $\big\lbrace u \in C_0^\infty(\R^3) {\mbox { supported in }} \om \big\rbrace$\big), then $V^\infty_1$ is a non negative operator on $L^2(\R^3)$.

Moreover, for $V^\infty_0 $, we have:
$$V^\infty_0 =H_0^{-1} \one_{K_0}  \Big( I - \one_{K_0}   (H_0 + \one_{K_0})^{-1}    \one_{K_0}  \Big)  \one_{K_0}H_0^{-1}.$$
Then exploiting that $\one_{K_0}   (H_0 + \one_{K_0})^{-1}    \one_{K_0} $ is a compact operator in ${\mathcal L}(L^2(\R^3))$, we deduce that, on a finite codimension subspace of $L^2(\R^3)$, we have:
$$ H_0 \,  V^\infty_0 \, H_0 \geq \frac12 \one_{K_0} .$$
This implies the lower bound of \eqref{equivV} in the Dirichlet case. 
The other estimates (lower bound for $l=\gamma$ and upper bounds) are  consequences of Lemma \ref{lemWT} and Lemma \ref{lemTSTo} below. 
%It is analogous to Lemma 4.6 of \cite{GoKaPe13} but we will give {\Rd some details (A VOIR .....!) }because we are in $\R^3$ and want to apply it to $f=f^\perp\otimes \chi_3$ which doesn't belong to  Ker$(H_0- \l_q)$.   
\end{proof}

\begin{lemma}\label{lemWT}
Fix $q\in \N$. For $ l=\infty, \gamma$ and $K \subset K_1$ there exists $T^l_\Sigma$ an elliptic pseudo differential operator, on $L^2(\Sigma)$, of order $1$ and $\Lc_q$ a finite codimension subspaces of $\Ec_q(K_1)$  such that for any $f \in \Lc_q$,
$$
 \langle H_0 f, V^\infty H_0 f \rangle_{L^2(\R^3)} = \Lambda_q \langle f_{\vert K}, f_{\vert K}\rangle_{L^2(K)} -\langle f_{\vert \Sigma},T^\infty_\Sigma f_{\vert \Sigma}\rangle_{L^2(\Sigma)}.$$
$$
 \langle H_0 f, V^\gamma H_0 f \rangle_{L^2(\R^3)} =  -\langle f_{\vert \Sigma},T^\gamma_\Sigma f_{\vert \Sigma}\rangle_{L^2(\Sigma)}.$$
\end{lemma}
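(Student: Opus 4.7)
The plan is to substitute $v = f \in \mathcal{L}_q$ (with $\mathcal{L}_q$ to be chosen) directly into the integral representations \eqref{eq801} and \eqref{eq811}, and exploit the fact that $f$ satisfies $(H_0 - \Lambda_q)f = 0$ on $K_1 \supset K$ to turn both sides of each identity into a purely boundary expression in $f|_\Sigma$.

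For the Dirichlet case, inserting $v = f$ into \eqref{eq801} and using $(H_0 f)_{|K} = \Lambda_q f_{|K}$ immediately yields
\begin{equation*}
\langle H_0 f, V^\infty H_0 f\rangle_{L^2(\R^3)} = \Lambda_q \|f_{|K}\|_{L^2(K)}^2 + \langle f|_\Sigma,\, \dbdN f - \mathbf{DN}_\Omega(f|_\Sigma)\rangle_{L^2(\Sigma)}.
\end{equation*}
Since $f$ is smooth across $\Sigma$, the restriction $f_{|K}$ solves the interior eigenequation $(H_0 - \Lambda_q)u = 0$ on $K$ with Dirichlet data $f|_\Sigma$. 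This allows the introduction of the interior Dirichlet-to-Neumann operator at spectral parameter $\Lambda_q$, $\mathbf{DN}_K^{\Lambda_q} : g \mapsto \dbdN u|_\Sigma$ (with $\nu$ the outward normal of $\Omega$, hence inward to $K$). Because $\Lambda_q$ may lie in the discrete Dirichlet spectrum of $H_K$, $\mathbf{DN}_K^{\Lambda_q}$ is well defined only on a subspace of finite codimension in $H^{1/2}(\Sigma)$; we define $\mathcal{L}_q \subset \mathcal{E}_q(K_1)$ as the preimage of this subspace under $\Gamma_0$. Then $\dbdN f = \mathbf{DN}_K^{\Lambda_q}(f|_\Sigma)$ on $\mathcal{L}_q$, and setting $T_\Sigma^\infty := \mathbf{DN}_\Omega - \mathbf{DN}_K^{\Lambda_q}$ yields the claimed formula. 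By Proposition \ref{defDRRD}, both terms are elliptic pseudodifferential operators of order $1$ on $\Sigma$ with the same absolute principal symbol but opposite sign (due to opposite orientation of $\nu$ relative to $\Omega$ and $K$), so their difference has principal symbol $2|\xi'|_{g^*}$ and is elliptic of order $1$.

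For the Robin case, the analogous idea is applied to \eqref{eq811}. For $f \in \mathcal{E}_q(K_1)$, $f_{|K}$ is a solution of $(H_0 - \Lambda_q)u = 0$ on $K$, so one may introduce the interior Dirichlet-to-Robin map at $\Lambda_q$, $\mathbf{DR}_K^{\Lambda_q} : f|_\Sigma \mapsto \dbdR f$, which is bounded modulo a finite-dimensional subspace (the kernel, corresponding to $\Lambda_q$ being an interior Robin eigenvalue). Shrinking $\mathcal{L}_q$ by finitely many additional conditions if necessary, we have $\dbdR f = \mathbf{DR}_K^{\Lambda_q}(f|_\Sigma)$ on $\mathcal{L}_q$, and \eqref{eq811} becomes
\begin{equation*}
\langle H_0 f, V^\gamma H_0 f\rangle_{L^2(\R^3)} = -\langle f|_\Sigma,\, T_\Sigma^\gamma f|_\Sigma\rangle_{L^2(\Sigma)}, \qquad T_\Sigma^\gamma := (\mathbf{DR}_K^{\Lambda_q})^*(\mathbf{RD}_K - \mathbf{RD}_\Omega)\mathbf{DR}_K^{\Lambda_q}.
\end{equation*}
This is a composition of elliptic pseudodifferential operators of orders $1$, $-1$, $1$, hence of order $1$; ellipticity follows because $\mathbf{RD}_K - \mathbf{RD}_\Omega$ has non-vanishing principal symbol (the principal symbols of $\mathbf{RD}_K$ and $\mathbf{RD}_\Omega$ come with opposite signs, again from the two opposite orientations of $\nu$), and $\mathbf{DR}_K^{\Lambda_q}$ is elliptic.

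The main obstacle is the pseudodifferential structure underlying Proposition \ref{defDRRD}, which is deferred to Sections \ref{S:sec6}-\ref{s7}: one must show that the exterior operators $\mathbf{DN}_\Omega$, $\mathbf{RD}_\Omega$ and the interior spectral-parameter operators $\mathbf{DN}_K^{\Lambda_q}$, $\mathbf{RD}_K^{\Lambda_q}$, $\mathbf{DR}_K^{\Lambda_q}$ are indeed classical elliptic pseudodifferential operators on $\Sigma$, and compute their principal symbols so as to verify the non-cancellation claims used above. The book-keeping with the finite-codimension subspace $\mathcal{L}_q$ — which absorbs the possible interior Dirichlet/Robin eigenvalues at $\Lambda_q$ — is routine but must be threaded through both the Dirichlet and Robin arguments uniformly.
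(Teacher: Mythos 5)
Your overall line of attack is exactly the paper's: plug $v=f\in\Ec_q(K_1)$ into \eqref{eq801} and \eqref{eq811}, use $(H_0 f)_{|K}=\Lambda_q f_{|K}$ for the first term, and convert $\dbdN f$ (resp.\ $\dbdR f$) into an elliptic first--order $\Psi$DO applied to $f_{|\Sigma}$, so that $T^\infty_\Sigma$ becomes a difference of Dirichlet--Neumann--type operators and $T^\gamma_\Sigma$ a conjugation of $\mathbf{RD}_K-\mathbf{RD}_\Omega$. The formulas you write for $T^\infty_\Sigma$ and $T^\gamma_\Sigma$ coincide with the paper's $T^\infty_\Sigma={\bf DN_\Omega}-R^0_q$ and $T^\gamma_\Sigma=(R^\gamma_q)^*({\bf RD_K}-{\bf RD_\Omega})R^\gamma_q$, and your principal symbol observation ($\pm$ sign flip between interior and exterior, so the difference is $\sim 2\Sbf^{-1}$ to leading order) is the reason the difference is elliptic.

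There is, however, a genuine imprecision in how you cut out $\Lc_q$. You take $\Lc_q=\Gamma_0^{-1}(\text{domain of }\mathbf{DN}_K^{\Lambda_q})\cap\Ec_q(K_1)$, and then assert $\dbdN f=\mathbf{DN}_K^{\Lambda_q}(f_{|\Sigma})$ on $\Lc_q$. This does not follow: when $\Lambda_q$ lies in the interior Dirichlet spectrum of $H_K$ with eigenfunctions $\psi_1,\dots,\psi_k$, the map $f_{|\Sigma}\mapsto\dbdN f$ is \emph{multi-valued}, not merely partially defined. Indeed two functions $f_1,f_2\in\Ec_q(K_1)$ with $f_{1|\Sigma}=f_{2|\Sigma}$ satisfy $(f_1-f_2)_{|K}=\sum c_j\psi_j$ and hence $\dbdN f_1-\dbdN f_2=\sum c_j\dbdN\psi_j$, which need not vanish. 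A constraint placed only on $\Gamma_0 f=f_{|\Sigma}$ cannot eliminate this ambiguity, because it lives in $f_{|K}$ and not in $f_{|\Sigma}$. The paper avoids this by building, from the spectral-parameter layer potentials $\Sbf_q,\Dbf_q$ on $\Sigma$ (Green kernel of $(\nabla^A)^2+\Lambda_q$ on $K$), a \emph{globally defined} elliptic $\Psi$DO $R^\gamma_q$ of order $1$ together with a \emph{finite-rank operator $F^\gamma_q$ acting on $f$ itself}, such that $\dbdR f=R^\gamma_q(f_{|\Sigma})+F^\gamma_q(f)$ for every $f$ with $((\nabla^A)^2+\Lambda_q)f=0$ in $K$ (Lemma~\ref{lemRq}). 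One then sets $\Lc_q=\Ec_q(K_1)\cap\ker F^\gamma_q$, which is a finite-codimension condition on $f$, not only on $f_{|\Sigma}$. Your argument is repairable by adopting exactly this mechanism; as written, the claim that $\dbdN f$ is a function of $f_{|\Sigma}$ on the subspace you constructed is unjustified.
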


The above lemma is  comparable to Lemma 4.2 of  \cite{GoKaPe13}. The proof, which is closely related to the 2D case (see Subsection \ref{S:thm5.2} below), exploits the expressions of  $V^l$ in terms of {\it Dirichlet-Neumann} and {\it Robin-Dirichlet} operators \big(see \eqref{eq801} and \eqref{eq811}\big) and their elliptic properties as pseudo differential operators on $\Sigma$ (see Proposition \ref{defDRRD}). Moreover, for $f$ satisfying  $((\nabla^A)^2 +\Lambda_q)f = 0$ in $K$, there exists an elliptic pseudo differential operators $R^\gamma_q$ of order $1$ on $L^2(\Sigma)$
 such that  $ \dbdR^{A,\gamma} f = R^\gamma_q( f_{\vert \Sigma })  + F^\gamma_q( f)$ with $F^\gamma_q$ a finite rank operator (see Lemma \ref{lemRq}). In particular, for $\gamma =0$, $ {\dbdN} f = R^0_q( f_{\vert \Sigma })  + F^0_q( f)$ \big(for more details, we also refer to Remark 3.12 of \cite{GoKaPe13}\big).

As in the proof of Lemma 3.14 of  \cite{GoKaPe13} which doesn't depends on the even dimension of the space (see the end of Section 4 of \cite{GoKaPe13}), we have

\begin{lemma}\label{lemTSTo}
Fix $q\in \N$ and $K_i$, $i=0,1$ two compact domains of $\R^3$, $K_0 \subset K \subset K_1$, $\partial K_i \cap \partial K= \emptyset$. Let $T_\Sigma$ be a non negative  elliptic pseudo differential operator, on $L^2(\Sigma)$, of order $1$. 
Then there exists $\M_q$ a finite codimension subspaces of $\Ec_q(K_1)$  and $C>1$ such that for any $f \in \M_q$,
\bel{equivTS}
\frac1{C} \langle f ,  \one_{K_0} f \rangle_{L^2(\R^3)} \leq   \langle f_{\vert \Sigma}, T_\Sigma f_{\vert \Sigma}\rangle_{L^2(\Sigma)}
\leq C \langle f ,  \one_{K_1} f \rangle_{L^2(\R^3)} .
\ee
\end{lemma}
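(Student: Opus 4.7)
The plan is to use the ellipticity of $T_\Sigma$ to replace the quadratic form $\langle f_{|\Sigma}, T_\Sigma f_{|\Sigma}\rangle$ by an $H^{1/2}(\Sigma)$-norm (modulo a finite-dimensional nuisance), and then to exploit that functions $f \in \Ec_q(K_1)$ satisfy an elliptic equation on the open neighborhood $K_1$ of $K$ in order to pass between the trace norm on $\Sigma$ and the bulk $L^2$ norms on $K_0$ and $K_1$.

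First, since $T_\Sigma$ is a non-negative elliptic classical pseudo-differential operator of order $1$ on the compact manifold $\Sigma$, a Gårding-type estimate yields a finite-dimensional subspace $N \subset L^2(\Sigma)$ (which handles the kernel and the possibly indefinite sub-principal part) and a constant $C_1 > 1$ such that
\begin{equation*}
\frac{1}{C_1}\|g\|_{H^{1/2}(\Sigma)}^2 \leq \langle g , T_\Sigma g\rangle_{L^2(\Sigma)} \leq C_1 \|g\|_{H^{1/2}(\Sigma)}^2, \qquad g \in N^\perp.
\end{equation*}
Setting $\M_q^{(1)} := \{ f \in \Ec_q(K_1) : f_{|\Sigma} \perp N\}$ defines a finite codimension subspace of $\Ec_q(K_1)$. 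It then suffices to estimate $\|f_{|\Sigma}\|_{H^{1/2}(\Sigma)}^2$ above by $\|f\|_{L^2(K_1)}^2$ and below by $\|f\|_{L^2(K_0)}^2$ on a further finite codimension subspace.

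For the upper bound, I pick an intermediate compact $K'$ with $K_0 \subset K \subset K' \Subset K_1$ and use interior elliptic regularity (a Caccioppoli-type estimate) for solutions of $(H_0 - \Lambda_q) f = 0$ on $K_1$ to get $\|f\|_{H^1(K')} \leq C \|f\|_{L^2(K_1)}$; combined with continuity of the trace $H^1(K') \to H^{1/2}(\Sigma)$, this yields $\|f_{|\Sigma}\|_{H^{1/2}(\Sigma)}^2 \leq C \langle f, \one_{K_1} f\rangle$. For the lower bound, I invoke well-posedness of the interior Dirichlet problem $(H_0 - \Lambda_q) u = 0$ in $K$, $u_{|\Sigma} = g \in H^{1/2}(\Sigma)$, which is solvable with $\|u\|_{L^2(K)} \leq C\|g\|_{H^{1/2}(\Sigma)}$ provided $\Lambda_q$ is not a Dirichlet eigenvalue of $H_0$ on $K$. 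Since $K$ is compact and $H_0$ elliptic, the corresponding eigenspace $E_q$ is finite dimensional; applying uniqueness of solutions in $K$ with data $f_{|\Sigma}$, after restricting to $\M_q := \{f \in \M_q^{(1)} : f_{|K} \perp E_q\}$ (still of finite codimension in $\Ec_q(K_1)$), one obtains $\|f\|_{L^2(K)} \leq C \|f_{|\Sigma}\|_{H^{1/2}(\Sigma)}$, hence $\langle f, \one_{K_0} f\rangle \leq C \langle f_{|\Sigma}, T_\Sigma f_{|\Sigma}\rangle$.

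The main obstacle is the lower bound: one has to identify and quotient out both the nontrivial kernel/low-eigenvalue directions of $T_\Sigma$ and the Dirichlet eigenfunctions of $H_0$ at $\Lambda_q$ on $K$, and then verify that the Dirichlet problem is genuinely well-posed with magnetic potential (which it is, since $H_K^{\infty}$ has compact resolvent and only a finite-dimensional eigenspace at $\Lambda_q$). As the paper notes, this is essentially Lemma 3.14 of [GoKaPe13], whose argument does not depend on the even dimension of the ambient space, so I would follow that proof verbatim in the present three-dimensional magnetic setting, checking that the trace and interior elliptic estimates transfer without change.
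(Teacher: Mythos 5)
Your sketch is consistent with what the paper actually does: the paper gives no proof of Lemma~\ref{lemTSTo}, deferring entirely to Lemma 3.14 of \cite{GoKaPe13} with the remark that that argument is dimension-independent. Your outline correctly reconstructs the expected content of that proof: (a) ellipticity of $T_\Sigma$ on the compact manifold $\Sigma$ gives, via a G{\aa}rding-type inequality, two-sided comparison of $\langle g, T_\Sigma g\rangle$ with $\|g\|_{H^{1/2}(\Sigma)}^2$ after cutting out a finite-dimensional nuisance space; (b) the upper bound follows from interior elliptic regularity for solutions of $(H_0-\Lambda_q)f=0$ on $K_1$ (note $\Sigma\subset K_1^\circ$ because $\partial K_1\cap\partial K=\emptyset$) combined with the trace theorem; and (c) the lower bound follows from well-posedness of the interior Dirichlet problem for $H_0-\Lambda_q$ on $K$, modulo the finite-dimensional Dirichlet eigenspace $E_q$.

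The only place where your write-up is slightly loose is the lower bound. The quantitative estimate $\|f_{|K}\|_{L^2(K)}\le C\|f_{|\Sigma}\|_{H^{1/2}(\Sigma)}$ for $f_{|K}\perp E_q$ is not literally ``uniqueness of solutions''; one needs the Fredholm structure: write $f=Eg+v$ with $E:H^{1/2}(\Sigma)\to H^1(K)$ a bounded extension of $g=f_{|\Sigma}$, observe that $v\in H^1_0(K)$ solves $(H_0-\Lambda_q)v=-(H_0-\Lambda_q)Eg$, and invert using the reduced resolvent of the Dirichlet realization on $E_q^{\perp}$; the component of $v$ in $E_q$ is then fixed by the constraint $f_{|K}\perp E_q$, giving $\|f_{|K}\|_{L^2(K)}\le C\|g\|_{H^{1/2}(\Sigma)}$. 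Since you already say you would follow \cite{GoKaPe13} verbatim, this is a presentational point rather than a gap; the plan is sound and matches the paper's (cited) approach.
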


\begin{proof} {\it of Proposition \ref{propT1}.}
By definition of $W^l$ \big(see \eqref{defWl}\big) and exploiting the proof of  Lemma \ref{lemVM}, for $f^\perp \in ker (H_{\text{Landau}}-\Lambda_q)$ and any $\chi_3 \in C^\infty_c(\R_{x_3})$ equal to $1$ on $I_K$ \big(defined by \eqref{defIK}\big), we have
$$ \langle f^\perp , T_q^l f^\perp \rangle_{L^2(\R^2)} = \varepsilon(l) \frac{\Lambda_q^2}2 \int_{\R^3} f^\perp(x_1,x_2) \overline{(H_0^{-1} \chi_3 H_0 V^l H_0 \chi_3H_0^{-1} )(f^\perp \otimes 1_{\R})} (x_1,x_2,x_3) d x.
$$
For $f^\perp \in ker(H_{\text{Landau}}-\Lambda_q)$,  in $\Lcal \Big(e^{\epsilon <x_3>} L^2(\R^3), L^2(\R^3)\Big)$
$$  \chi_3H_0^{-1} (f^\perp\otimes 1_{\R})= \chi_3 (D^2_{x_3} + \Lambda_q)^{-1} (f^\perp\otimes 1_{\R})= \frac1{\l_q} (f^\perp\otimes \chi_3 ).$$
Then 
$$ \langle f^\perp , T_q^l f^\perp \rangle_{L^2(\R^2)}=  \frac{\varepsilon(l)}2 \langle H_0(f^\perp\otimes \chi_3), V^l H_0(f^\perp\otimes \chi_3)\rangle_{L^2(\R^3)},$$
and according to Proposition \ref{prop42}, for $K_0 \subset K \subset K_1$, such that $\partial K_i \cap \partial K= \emptyset$, $i=0,1$, there exists $C>1$ and $\Lc_q$ a finite codimension subspaces of $\Ec_q(K_1)$  such that for any $f^\perp \in \Lc_q$,
$$
\frac1{C} \langle (f^\perp\otimes \chi_3 ) ,  \one_{K_0} (f^\perp\otimes \chi_3 ) \rangle_{L^2(\R^3)} \leq   \langle f^\perp , T_q^l f^\perp \rangle_{L^2(\R^2)} 
\leq C \langle (f^\perp\otimes \chi_3 ) ,  \one_{K_1} (f^\perp\otimes \chi_3 ) \rangle_{L^2(\R^3)} .
$$
Let us choose $K_i$,  $i=0,1$, of the form $K_i= K^\perp_i \times I_i\subset \R^2 \times \R$ such that for $\chi_3 \in C_c^\infty(\R)$ equals to $1$ on $I_1 (\supset  I_0)$, we have:
$$\langle (f^\perp\otimes \chi_3 ) ,  \one_{K_i} (f^\perp\otimes \chi_3 ) \rangle_{L^2(\R^3)} = 
\int_{K_i} \vert f^\perp(x_1,x_2) \chi_3 (x_3) \vert^2  \one_{K^\perp_i}(x_1,x_2)  \one_{I_i}(x_3) d x_1d x_2d x_3$$
$$= \vert I_i \vert  \, \langle f^\perp ,  \one_{K_i^\perp} f^\perp \rangle_{L^2(\R^2)} .$$
This implies \eqref{equivT} because $f^\perp= p_q f^\perp$ .
\end{proof}

%{\Rd
\section{Boundary operators}\label{S:sec6}

\quad In this section we recall how the method of layer potential allows to prove that the {\it Dirichlet-Neumann} et {\it Neumann-Dirichlet} operators are pseudo differential operators on a surface and how Lemma \ref{lemWT} and Lemma \ref{lemTSTo} follow. In presence of a constant magnetic field, these technics was already used in \cite{Pe09}, \cite{GoKaPe13}  for even-dimensional cases.

\subsection{Green kernel for $(\nabla^A)^2$ near the diagonal}

\quad Let $G_0(x,y)$, $x,y \in \R^3$ be the integral kernel of $H_0^{-1}$. It is related to ${\mathcal H}_0(t,x,y)$, the heat kernel, by the formula:
$$ G_0(x,y) = \int_0^{+ \infty} {\mathcal H}_0(t,x,y) dt,$$
where \big(see e.g. \cite{AvHeSi78_01}\big), for $x=(x_1,x_2,x_3)=(x_\perp,x_3) \in \R^2\times \R$, 
$${\mathcal H}_0(t,x,y)= \frac{1}{\sqrt{4 \pi t}} \frac{b}{4 \pi \sinh (bt)} \exp \left\lbrace - \frac{(x_3-y_3)^2}{4t} - \frac{b}4 \coth (bt) \vert x_\perp - y_\perp \vert^2  - i \frac{b}2 x_\perp  \wedge   y_\perp \right\rbrace,$$
with $\vert x_\perp \vert^2= x_1^2 + x_2^2$, $x_\perp  \wedge   y_\perp = x_1 y_2 - x_2 y_1$.
Then we obtain:

\begin{lemma}\label{lemKernel}
The integral kernel of $H_0^{-1}$ is smooth outside the diagonal and we have
\begin{equation}\label{eqK1}
G_0(x,y) = K_0(x,y) + \mathcal{O}(1) \quad \hbox{as} \quad \vert x-y \vert \rightarrow 0,
\end{equation}
with
\begin{equation}\label{eqK2}
K_0(x,y) \sim \frac{e^{-\frac{ib}{2}x_\perp \wedge y_\perp}}{4 \pi \vert x-y \vert} 
\sim \frac{1}{4 \pi \vert x-y \vert} \quad \hbox{as} \quad \vert x-y\vert \rightarrow 0.
\end{equation}
Moreover, for  $x, y \in \Sigma$, $ (\dbdN)_y G_0(x,y)$ satisfies the corresponding behavior as $\vert x-y\vert$ tends to $0$, where $(\dbdN)_y$ means that the differentiation is with respect to the variable $y$. More precisely, we have 
\begin{equation}\label{eqK3}
(\dbdN)_y G_0(x,y) = (\dbdN)_y \left( \frac{e^{-\frac{ib}{2}x_\perp \wedge y_\perp}}{4 \pi \vert x-y \vert} \right) 
+ \mathcal{O}(1) \quad \hbox{as} \quad \vert x-y\vert \rightarrow 0.
\end{equation}
\end{lemma}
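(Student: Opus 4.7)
The plan is to work from the explicit heat kernel formula for $\mathcal{H}_0$ and analyse the time integral $G_0(x,y)=\int_0^{+\infty} \mathcal{H}_0(t,x,y)\,dt$ separately in the small-$t$ and large-$t$ regimes. Smoothness of $G_0$ off the diagonal follows at once from the joint smoothness of $\mathcal{H}_0$ in $(x,y)$ away from $x=y$ together with the exponential decay $\sinh(bt)^{-1}=O(e^{-bt})$ as $t\to +\infty$, which allows differentiation under the integral sign. Thus the only issue is the contribution of the small-$t$ regime, where the singularity on the diagonal is generated.

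For $t$ near $0$ I would use the expansions $\sinh(bt)=bt(1+O(t^2))$ and $\coth(bt)=\frac{1}{bt}+O(t)$, which give
\begin{equation*}
\frac{1}{\sqrt{4\pi t}}\frac{b}{4\pi\sinh(bt)}=\frac{1}{(4\pi t)^{3/2}}\bigl(1+O(t^2)\bigr),\qquad \frac{b}{4}\coth(bt)|x_\perp-y_\perp|^2=\frac{|x_\perp-y_\perp|^2}{4t}+O(t)|x_\perp-y_\perp|^2.
\end{equation*}
Inserting these into the expression for $\mathcal{H}_0$ yields, modulo a smooth remainder,
\begin{equation*}
\mathcal{H}_0(t,x,y)=\frac{e^{-\tfrac{ib}{2}x_\perp\wedge y_\perp}}{(4\pi t)^{3/2}}\,e^{-|x-y|^2/(4t)}\bigl(1+O(t)\bigr).
\end{equation*}
Integrating the leading term and using the classical identity $\int_0^{+\infty}(4\pi t)^{-3/2}e^{-|x-y|^2/(4t)}\,dt=(4\pi|x-y|)^{-1}$ produces precisely $K_0(x,y)$. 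The $O(t)$ correction and the large-$t$ tail are controlled by $\int_0^{+\infty}t^{-1/2}e^{-|x-y|^2/(4t)}\,dt=c|x-y|$ and by the exponential decay of $\sinh(bt)^{-1}$, respectively, so both contribute only $O(1)$ as $|x-y|\to 0$. This establishes \eqref{eqK1}--\eqref{eqK2}; the second asymptotic equivalence in \eqref{eqK2} is immediate since $e^{-\tfrac{ib}{2}x_\perp\wedge y_\perp}\to 1$ as $y\to x$.

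For the normal derivative identity \eqref{eqK3}, I would differentiate the same expansion in the $y$-variable. The magnetic normal derivative $\dbdN=\nu\cdot(\nabla_y-iA(y))$ applied to the Gaussian $e^{-|x-y|^2/(4t)}$ produces an extra factor of order $|x-y|/t$, and after the time integration the resulting contribution is of the same order as differentiating $(4\pi|x-y|)^{-1}$; the smooth phase and the linear potential $A$ contribute only bounded multiplicative factors near the diagonal. Collecting terms yields
\begin{equation*}
(\dbdN)_y G_0(x,y)=(\dbdN)_y\!\left(\frac{e^{-\tfrac{ib}{2}x_\perp\wedge y_\perp}}{4\pi|x-y|}\right)+O(1).
\end{equation*}
The main technical point, which is mild, is to verify uniform control of the $O(t)$ corrections in the $\coth$/$\sinh$ expansions under differentiation; after Gaussian integration they contribute at most $O(1)$ terms. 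Observe finally that for $x,y\in\Sigma$ smoothness of $\Sigma$ gives $(y-x)\cdot\nu(y)=O(|x-y|^2)$, so the singular part of $(\dbdN)_y K_0(x,y)$ is of order $|x-y|^{-1}$, consistent with the statement.
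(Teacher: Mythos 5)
Your proof follows essentially the same route as the paper's (split the time integral into small and large $t$, expand $\sinh(bt)$ and $\coth(bt)$ near $t=0$, and recognize the leading term as the 3D Newtonian kernel times the magnetic phase), so the two arguments are the same in substance. One small slip to fix: the displayed identity $\int_0^{+\infty} t^{-1/2} e^{-|x-y|^2/(4t)}\,dt = c|x-y|$ is false, since the integrand tends to $t^{-1/2}$ as $t\to+\infty$ and the integral diverges; what you actually need, and what is true, is that $\int_0^1 t^{-1/2} e^{-|x-y|^2/(4t)}\,dt = O(1)$ (or, equivalently, that the $O(t)$ correction inherits the exponential decay of $\sinh(bt)^{-1}$ for large $t$), which is enough to make the error term $O(1)$.
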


\begin{proof}

By the change of variables $u = bt$, we can rewrite $G_0(x,y)$ as
\begin{equation}\label{eqK4}
\small{G_0(x,y) = e^{-\frac{ib}{2}x_\perp \wedge y_\perp} I(x,y), \qquad
I(x,y) := \frac{b^\frac{1}{2}}{(4 \pi)^{\frac{3}{2}}} \int_0^{+ \infty} \frac{e^{ -\frac{b}{4} \left[ \frac{(x_3-y_3)^2}{u} + \coth (u) \vert x_\perp - y_\perp \vert^2 \right]}}{u^\frac{1}{2} \sinh (u)} du}.
\end{equation}
Then Lemma \ref{lemKernel} is a direct consequence of the following lemma.
\begin{lemma}\label{lemKerbis}
\textup{\textbf{(i)}} The function $I(x,y)$ defined by \eqref{eqK4} can be rewritten as
$$
I(x,y) = I_0(x,y) + I_\infty(x,y),
$$
where

\textup{\textbf{a)}} $I_\infty(x,y) = \mathcal{O}(1)$ uniformly with respect to the variables $x, y$.

\textup{\textbf{b)}} The function $I_0(x,y)$ satisfies for $\vert x - y \vert \ll 1$ 
\begin{equation}\label{eqK5}
\small{I_0(x,y) = \frac{1}{(4 \pi)^\frac{3}{2} \vert x - y \vert} \int_{b \vert x - y \vert^2}^{+\infty} \frac{e^{-\frac{u}{4}}}{u^\frac{1}{2}} du + \mathcal{O}(1).}
\end{equation}
\textup{\textbf{(ii)}} The function $(\dbdN)_y G_0(x,y)$ satisfies for $\vert x - y \vert \ll 1$
\begin{equation}\label{eqK6}
 \small{(\dbdN)_y G_0(x,y) = \small{-2 i \sum_{j=1}^3 \nu_j A_j G_0(x,y) +
 \frac{\nu \cdot (x-y) e^{-\frac{ib}{2}x_\perp \wedge y_\perp}}{2(4 \pi)^{\frac{3}{2}}\vert x - y \vert^3}} 
  \int_{b \vert x - y \vert^2}^{+\infty} u^\frac{1}{2} e^{-\frac{u}{4}} du
}+ \mathcal{O}(1).
%& \small{-2 \sum_{j=1}^3 \nu_j A_j G_0(x,y) + \nu \cdot (x - y) O(b)}.
%\end{aligned}
\end{equation}
\end{lemma}

The proof of Lemma \ref{lemKerbis} is of computational nature. Hence, for more transparency in the presentation, it is differed in the Appendix. Now let us back to the proof of Lemma \ref{lemKernel}.

Identities \eqref{eqK1} and \eqref{eqK2} follows immediately from \textup{\textbf{(i)}} of Lemma \ref{lemKerbis} together with \eqref{eqK4} and remarking that $\int_0^{+\infty} u^{-\frac{1}{2}} e^{-\frac{u}{4}} du = 4 \int_0^{+\infty} e^{-v^2} du = (4\pi)^{\frac{1}{2}}$.

Identity \eqref{eqK3} follows from  \textup{\textbf{(ii)}} of Lemma \ref{lemKerbis} remarking firstly that 
$\frac{1}{2 }\int_0^{+\infty} u^{\frac{1}{2}} e^{-\frac{u}{4}} du = (4 \pi)^{\frac{1}{2}}$, and secondly that
$$
(\dbdN)_y \left( \frac{e^{-\frac{ib}{2}x_\perp \wedge y_\perp}}{4\pi \vert x-y \vert} \right)
= \frac{e^{-\frac{ib}{2}x_\perp \wedge y_\perp} \nu \cdot (x - y)}{4 \pi \vert x-y \vert^3}
-2 i\sum_{j=1}^3 \nu_j A_j G_0(x,y).
$$
This concludes the proof of Lemma \ref{lemKernel}.
\end{proof}

\subsection{Boundary operators associated to $(\nabla^A)^2$}\label{ss62}
{
\quad According to the properties of $G_0$ near the diagonal, the following single-layer and double-layer potentials of a function $f$ on $\Sigma$ are well defined:
\bel{defScal}
\Sc f(x) := \int_\Sigma f(y) G_0(x,y) d\sigma(y), \qquad x \in \R^3 \setminus \Sigma,
\ee 
\bel{defDcal}
\Dc f(x) := \int_\Sigma f(y) (\dbdN)_y G_0(x,y) d\sigma(y), \qquad x \in \R^3\setminus \Sigma
\ee 
\big(see for instance \cite{Tay96}\big). Moreover, for $x \in \Sigma$ we have the following limit relations:
\bel{eqS}
\lim_{z \rightarrow x} \Sc f_\om (z) = \Sbf f_\Sigma(x)  = \lim_{z \rightarrow x} \Sc f_K (z),
\ee
\bel{eqDext}
\lim_{z \rightarrow x} \Dc f_\om (z) = -  \frac12 f_\Sigma(x) + \Dbf f_\Sigma, 
\ee
 \bel{eqDint}
\lim_{z \rightarrow x} \Dc f_K (z) = \frac12 f_\Sigma(x) + \Dbf f_\Sigma, 
\ee
where 
\bel{defSbf}
\Sbf f_\Sigma (x) := \int_\Sigma f(y) G_0(x,y) d\sigma(y), \qquad x \in \Sigma,
\ee 
\bel{defDbf}
\Dbf f_\Sigma (x) := \int_\Sigma f(y) (\dbdN)_y G_0(x,y) d\sigma(y), \qquad x \in \Sigma,
\ee 
define compact operators on $L^2(\Sigma)$. More precisely, following the arguments  of Section 7.11 of  \cite{Tay96} \big(see also Lemmas 3.2, 3.3 and 3.6 of \cite{GoKaPe13}\big), $\Sbf $ and $\Dbf $ are pseudo differential operators, on $\Sigma$, of order $(-1)$, and $\Sbf $ is an elliptic self-adjoint operator on $L^2(\Sigma)$ which is an isomorphism from $L^2(\Sigma)$ onto $H^1(\Sigma)$. Moreover for $\varphi \in C^\infty(\Sigma)$ and $\bullet = K, \om$, $f_\bullet:= \Sc (\Sbf^{-1}\varphi ) _{\vert \bullet}$ is the unique solution of 
\bel{eq1}
 \left\{
 \begin{array}{ccc}
 (\nabla^A)^2 f_\bullet = 0 & \hbox{ in } & \bullet\\
 f_{\bullet \vert \Sigma }= \varphi,
 \end{array}
 \right.
 \ee
 and we have:
 \bel{eqBd1}
 \Sbf ( \dbdN f_K) = (\Dbf - \frac12) \varphi , \qquad \Sbf ( \dbdN f_\om) = (\Dbf + \frac12) \varphi .
 \ee

Inserting $\dbdR = \dbdN + \gamma$ above, we obtain:
\bel{eqBd2}
 \Sbf ( \dbdR f_K) = (\Dbf + \Sbf \gamma - \frac12) \varphi , \qquad \Sbf ( \dbdR f_\om) = (\Dbf + \Sbf \gamma + \frac12) \varphi .
 \ee

\begin{remark}
Due to the ellipticity of $\Sbf$ and the compactness of $\Dbf$ and $\Sbf$, the operators $(\Dbf +\Sbf \gamma  \pm \frac12)$ are Fredholm operators  and consequently there inverse exist on finite codimension spaces. In other words, there exists elliptic pseudo differential operators $R_{\pm}$ of order $0$,  such that $R_{\pm} \, (\Dbf +\Sbf \gamma  \pm \frac12)-I_{L^2(\Sigma)}$  and $(\Dbf +\Sbf \gamma  \pm \frac12) \, R_{\pm} -I_{L^2(\Sigma)}$ are finite rank operators.

Moreover, as in \cite{GoKaPe13} (see Lemma 3.7 and Corollary 3.10), for all $\varepsilon \in [-1,1] $ outside a finite subset \big($-\varepsilon \notin \sigma(\Sbf^{-1} (\Dbf \pm \frac12) + \gamma)  $\big), the operators $(\Dbf + \Sbf (\gamma + \varepsilon) \pm \frac12)$ are invertible on $L^2(\Sigma)$.
\end{remark}

On this way, as in the 2D case \big(see Proposition 3.8 of \cite{GoKaPe13}\big) we can give the definition of the {\it Dirichlet-Robin} and {\it Robin-Dirichlet} operators introduced in \eqref{eq801} and \eqref{eq811} :
\begin{proposition}\label{defDRRD}

\textup{\textbf{(i)}} The interior (resp. exterior) Dirichlet-Robin operator
${\bf DR_K}$ (resp. ${\bf DR_\om}$) defined by 
\begin{equation}\label{opDN}
{\bf DR_K} = \Sbf^{-1} \left(\Dbf + \Sbf \gamma - \frac12 \right) ,  \qquad  {\bf DR_\Omega} = \Sbf^{-1} \left(\Dbf + \Sbf \gamma + \frac12\right),
\end{equation}
are first order elliptic pseudo differential operators on $\Sigma$. The  interior (resp. exterior) Dirichlet-Neumann operator
${\bf DN_K}$ (resp. ${\bf DN_\om}$) corresponds to $\gamma=0$.

\textup{\textbf{(ii)}} The interior (resp. exterior) Robin-Dirichlet operator
${\bf RD_K}$ (resp. ${\bf RD_\om}$) defined on a finite codimensional subspace of $L^2(\Sigma)$ by 
\begin{equation}\label{opND}
{\bf RD_K} = \left(\Dbf + \Sbf \gamma - \frac12 \right)^{-1}\Sbf  ,  \qquad  {\bf RD_\Omega} = \left(\Dbf + \Sbf \gamma + \frac12\right)^{-1} \Sbf,
\end{equation}
are elliptic pseudo differential operators on $\Sigma$ of order $(-1)$. The  interior (resp. exterior) Neumann-Dirichlet operator
${\bf ND_K}$ (resp. ${\bf ND_\om}$) corresponds to $\gamma=0$.
%The above relations determine the Dirichlet-Neumann operators ${\bf DN_K}$ and ${\bf DN_\om}$ on finite codimension subspace. After a choice of an extension, this gives in particular that ${\bf DN_K}$ and ${\bf DN_\om}$ are pseudo differential operators, on $\Sigma$, of order $1$.
\end{proposition}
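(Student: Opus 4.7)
My plan is to assemble the proposition from the pseudo-differential calculus already set up for $\Sbf$ and $\Dbf$ in Subsection \ref{ss62}. First I would record the ingredients: $\Sbf$ and $\Dbf$ are pseudo-differential operators on $\Sigma$ of order $-1$, with $\Sbf$ elliptic, self-adjoint, and an isomorphism $L^{2}(\Sigma)\to H^{1}(\Sigma)$; multiplication by $\gamma\in C^{\infty}(\Sigma)$ is a zeroth order pseudo-differential operator, so $\Sbf\gamma$ is of order $-1$; and the identity $I$ is of order $0$ with principal symbol $1$.

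For part \textup{\textbf{(i)}}, the operator
\[
\Dbf+\Sbf\gamma\pm\tfrac{1}{2}I
\]
is the sum of two operators of order $-1$ and one of order $0$, and since the terms of order $-1$ contribute nothing to the principal symbol, its principal symbol equals $\pm\tfrac{1}{2}$, a non-vanishing constant. Hence it is an elliptic pseudo-differential operator of order $0$. On the other hand, the ellipticity of $\Sbf$ of order $-1$ produces by standard symbolic inversion an elliptic parametrix, and the isomorphism property $\Sbf:L^{2}(\Sigma)\to H^{1}(\Sigma)$ identifies $\Sbf^{-1}$ with an elliptic pseudo-differential operator of order $1$. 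Composing these two elliptic operators yields ${\bf DR}_{K,\Omega}$ as an elliptic pseudo-differential operator of order $1+0=1$, with principal symbol equal to $\pm\tfrac{1}{2}$ times the principal symbol of $\Sbf^{-1}$.

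For part \textup{\textbf{(ii)}}, I would invoke the remark preceding the proposition: since $\Sbf$ is compact and $\Dbf$ is compact on $L^{2}(\Sigma)$, the operator $\Dbf+\Sbf\gamma\pm\tfrac{1}{2}I$ is a compact perturbation of $\pm\tfrac{1}{2}I$, hence Fredholm of index $0$, and is therefore invertible on a finite codimensional closed subspace of $L^{2}(\Sigma)$. By the symbolic calculus, its inverse on that subspace agrees modulo smoothing with a parametrix which is an elliptic pseudo-differential operator of order $0$ with principal symbol $\pm 2$. Composing this parametrix with $\Sbf$ (elliptic of order $-1$) produces ${\bf RD}_{K,\Omega}$ as an elliptic pseudo-differential operator of order $0+(-1)=-1$.

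The only step requiring care is the passage from ``Fredholm'' to ``pseudo-differential inverse modulo a finite rank operator'', which is the step where the finite codimension subspace enters; this is precisely the content of the remark following Proposition \ref{defDRRD} and is standard on a compact manifold without boundary such as $\Sigma$. The rest is purely symbolic bookkeeping, entirely parallel to the even-dimensional treatment in \cite{GoKaPe13} and in Section~7.11 of \cite{Tay96}, so I would essentially reference that construction and verify that the order counting works identically in the three-dimensional setting.
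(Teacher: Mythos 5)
Your proposal is correct and follows essentially the same route the paper implicitly takes: the paper records in Subsection~\ref{ss62} exactly the facts you use ($\Sbf,\Dbf$ are pseudo-differential of order $-1$ on $\Sigma$, $\Sbf$ is elliptic, self-adjoint, and an isomorphism $L^2(\Sigma)\to H^1(\Sigma)$), states the Fredholm property of $\Dbf+\Sbf\gamma\pm\tfrac12$ together with the existence of a zeroth-order elliptic parametrix $R_\pm$ in the remark preceding the proposition, and then points to Proposition~3.8 of \cite{GoKaPe13} and Section~7.11 of \cite{Tay96} rather than giving a separate proof. Your symbolic order-counting (principal symbol $\pm\tfrac12$ at order $0$, $\Sbf^{-1}$ of order $1$, parametrix of order $0$ composed with $\Sbf$ of order $-1$) is the natural unpacking of exactly that construction.
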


\subsection{Proof of Lemma \ref{lemWT}}\label{S:thm5.2}

From \eqref{eq801} and \eqref{eq811}, for $f \in \Ec_q (K_1)$ (defined by \eqref{defEq}), we have:
$$
\langle H_0 f , V^\infty H_0 f \rangle_{L^2(\R^3)}  = 
\langle  f_{|K} , \Lambda_q f_{| K} \rangle_{L^2(K)} + \langle f_{|\Sigma} \, , \, \partial_N^{A}f \rangle_{L^2(\Sigma)}-\langle f_{|\Sigma} \, , \,{\bf DN_\Omega} ( f_{|\Sigma}) \rangle_{L^2(\Sigma)}  ,
$$
and
$$
\langle H_0 f, V^\gamma H_0 f \rangle_{L^2(\R^3)}= - \langle  \partial_\Sigma^{A,\gamma} f \, , \,( {\bf RD_K}-{\bf RD_\Omega})  \partial_\Sigma^{A,\gamma} f \rangle_{L^2(\Sigma)}.
$$

Moreover, based on the methods of Section \ref{ss62}, by considering the Green function associated to the operator $(\nabla^A)^2+\Lambda_q$ on $K$, we construct $\Sbf_q $ and $\Dbf_q$ two pseudo differential operators, on $\Sigma$, of order $(-1)$, with $\Sbf_q $  elliptic  on $L^2(\Sigma)$ but not necessarly invertible.
These operators satisfy relations like \eqref{eqBd1}, \eqref{eqBd2}. Then following the proofs of Lemma 3.11 and Remark 3.12 of \cite{GoKaPe13}, we obtain:

  \begin{lemma}\label{lemRq}
  There exists an elliptic pseudo differential operators $R^\gamma_q$ of order $1$ on $L^2(\Sigma)$
  and a finite rank  
  operator $F^\gamma_q$ 
  such that for $f$ satisfying  $((\nabla^A)^2 +\Lambda_q)f = 0$ in $K$,  
 \bel{eq11}
 \partial_\Sigma^{A,\gamma} f := \partial_N^A f + \gamma f=  R^\gamma_q( f_{\vert \Sigma })
 + F^\gamma_q( f).
 \ee
 \end{lemma}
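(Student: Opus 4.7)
The strategy is to transplant the layer-potential construction of Section \ref{ss62} from $H_0$ to the shifted operator $-(\nabla^A)^2 - \Lambda_q$. First I would fix a smooth bounded domain $\tilde K \supset \overline{K}$ chosen so that $\Lambda_q$ is not an eigenvalue of the Dirichlet realization of $-(\nabla^A)^2$ on $\tilde K$ (generically possible, since that spectrum is discrete). The corresponding resolvent at $\Lambda_q$ supplies a Green function $G_q(x,y)$ for $-(\nabla^A)^2 - \Lambda_q$ on $\tilde K \times \tilde K$. Crucially, the proof of Lemma \ref{lemKernel} depends only on the principal symbol of $(\nabla^A)^2$, so the same diagonal expansions \eqref{eqK1}--\eqref{eqK3} persist for $G_q$, with leading singularity $\tfrac{e^{-\frac{ib}{2}x_\perp \wedge y_\perp}}{4\pi |x-y|}$.

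Using $G_q$, I would define the shifted single- and double-layer operators $\Sbf_q, \Dbf_q$ on $L^2(\Sigma)$ by the analogues of \eqref{defSbf}--\eqref{defDbf}. They share the principal symbols of $\Sbf, \Dbf$; in particular $\Sbf_q$ is an elliptic pseudo-differential operator of order $-1$ on the compact surface $\Sigma$, while $\Dbf_q - \tfrac12$ is an elliptic operator of order $0$ with nonvanishing principal symbol $-\tfrac12$. The jump relation \eqref{eqBd1}, derived in exactly the same way, now reads
$$\Sbf_q (\partial_N^A f) = \Bigl(\Dbf_q - \tfrac{1}{2}\Bigr) f_{|\Sigma}$$
for every $f$ satisfying $(\nabla^A)^2 f + \Lambda_q f = 0$ in $K$. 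Since $\Sbf_q$ is elliptic on the compact manifold $\Sigma$, it is Fredholm on $L^2(\Sigma)$ and admits a parametrix $Q$ --- an elliptic pseudo-differential operator of order $+1$ --- with $Q \Sbf_q = I - \Pi_q$, where $\Pi_q$ is the orthogonal projection onto $\ker \Sbf_q$, a finite-dimensional space. Applying $Q$ to the jump relation yields
$$\partial_N^A f = Q\Bigl(\Dbf_q - \tfrac{1}{2}\Bigr) f_{|\Sigma} + \Pi_q(\partial_N^A f).$$
Setting $R^\gamma_q := Q\bigl(\Dbf_q - \tfrac{1}{2}\bigr) + \gamma$ (with $\gamma$ acting by multiplication) and $F^\gamma_q(f) := \Pi_q(\partial_N^A f)$ produces the desired decomposition \eqref{eq11}: $R^\gamma_q$ is elliptic of order $1$ since its principal symbol equals $-\tfrac12$ times that of $Q$, and $F^\gamma_q$ factors through the finite-dimensional range of $\Pi_q$, hence has finite rank as an operator on $f$.

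The main technical point is the initial construction of $G_q$ in the degenerate case when $\Lambda_q$ happens to lie in the Dirichlet spectrum of $-(\nabla^A)^2$ on the chosen $\tilde K$. This obstacle is cosmetic: either perturb $\partial \tilde K$ slightly to leave that (discrete) spectral set, or replace $G_q$ by any local parametrix of $-(\nabla^A)^2 - \Lambda_q$ near $K$. Different choices differ by smoothing operators on $\Sigma$, and therefore alter $\Sbf_q, \Dbf_q$ only by compact (in fact smoothing) corrections; these corrections can be absorbed into $F^\gamma_q$ up to an additional finite-rank projection without affecting the ellipticity of $R^\gamma_q$.
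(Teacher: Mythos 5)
Your proposal is correct and follows essentially the same route as the paper: both construct layer potentials $\Sbf_q, \Dbf_q$ from a Green function (or local fundamental solution) for $(\nabla^A)^2 + \Lambda_q$, observe that the diagonal singularity agrees with that of $G_0$, and invert the jump relation $\Sbf_q(\partial_N^A f) = (\Dbf_q - \tfrac12) f_{|\Sigma}$ modulo a finite-rank correction coming from the possible non-invertibility of $\Sbf_q$. Your treatment of the degenerate case where $\Lambda_q$ lies in the Dirichlet spectrum, and of the parametrix/Fredholm step, usefully fills in details the paper leaves implicit by citing Lemma~3.11 and Remark~3.12 of \cite{GoKaPe13}.
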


 Then we deduce Lemma \ref{lemWT} from Proposition \ref{defDRRD} and Lemma \ref{lemRq} with 
 $$T^\infty_\Sigma = {\bf DN_\Omega} - R^0_q, \qquad T^\gamma_\Sigma=(R^\gamma_q)^* ( {\bf RD_K}-{\bf RD_\Omega})R^\gamma_q.$$

}

\section{Appendix}\label{s7}

\quad This appendix is devoted to the proof of Lemma \ref{lemKerbis}. Constants in the $\mathcal{O}(\cdot)$ are generic, namely changing from a relation to another.
\\

\textbf{(i)} Let $I(x,y)$ be the function defined by \eqref{eqK4}. Define $I_0(x,y)$ and $I_\infty(x,y)$ by
\begin{equation}\label{eqa1}
\small{I_0(x,y) := \frac{b^\frac{1}{2}}{(4 \pi)^{\frac{3}{2}}} \int_0^{1} \frac{e^{ -\frac{b}{4} \left[ \frac{(x_3-y_3)^2}{u} + \coth (u) \vert x_\perp - y_\perp \vert^2 \right]}}{u^\frac{1}{2} \sinh (u)} du}
\end{equation}
and 
\begin{equation}\label{eqa2}
\small{I_\infty(x,y) := \frac{b^\frac{1}{2}}{(4 \pi)^{\frac{3}{2}}} \int_1^{+\infty} \frac{e^{ -\frac{b}{4} \left[ \frac{(x_3-y_3)^2}{u} + \coth (u) \vert x_\perp - y_\perp \vert^2 \right]}}{u^\frac{1}{2} \sinh (u)} du.}
\end{equation}
Since $\coth(u) \geq 1$ for $u \geq 1$, then clearly $I_\infty(x,y) = \mathcal{O}(1)$ uniformly with respect to $x$, $y$. This gives  \textbf{(i)} \textbf{a)} of Lemma \ref{lemKerbis}. 

Now let us prove \textbf{(i)} \textbf{b)}. By using the change of variables $u = 1/v$, the integral $I_0(x,y)$ given by \eqref{eqa1} verifies
\begin{equation}\label{eqa3}
\frac{(4\pi)^\frac{3}{2}}{b^{\frac{1}{2}}} \small{I_0(x,y) = \int_1^{+\infty} \frac{e^{ -\frac{b \vert x - y \vert^2 v}{4}}}{v^\frac{1}{2}} \frac{e^{ -\frac{b \vert x_\perp - y_\perp \vert^2}{4} \left[ \coth (\frac{1}{v}) - v  \right]}}{v \sinh (\frac{1}{v})} dv.}
\end{equation}
It can be easily checked that for $\vert x - y \vert \ll 1$, and uniformly with respect to $v \in [1,+ \infty[$, we have
\begin{equation}\label{eqa6}
\small{\frac{e^{-\frac{b \vert x_\perp - y_\perp \vert^2}{4} \left[ \coth (\frac{1}{v}) - v  \right]}}{v \sinh (\frac{1}{v})}
 = 1 + \mathcal{O} \left( \frac{1}{v} \right), \qquad v \geq 1.}
\end{equation} 
By combining \eqref{eqa3} and \eqref{eqa6}, we get for $\vert x - y \vert \ll 1$
\begin{equation}\label{eqa7}
\small{\frac{(4\pi)^\frac{3}{2}}{b^{\frac{1}{2}}} I_0(x,y) = \int_1^{+\infty} \frac{e^{ -\frac{b \vert x - y \vert^2 v}{4}}}{v^\frac{1}{2}}dv + \mathcal{O}(1).}
\end{equation}
Now \textbf{(i)} \textbf{b)} of Lemma \ref{lemKerbis} is a direct consequence of \eqref{eqa7} using the change of variables $u = b \vert x - y \vert^2 v$.
\\

\textbf{(ii)} The proof of this point is quite similar to that of the previous. Let $G_0(x,y)$ and $I(x,y)$ be the functions defined by \eqref{eqK4}. By a direct computation, it can be checked that 
\begin{equation}\label{eqa8}
\small{(\dbdN)_y G_0(x,y) = e^{-\frac{ib}{2}x_\perp \wedge y_\perp} \sum_{j=1}^3 \nu_j \partial_j I(x,y) - 2 i\sum_{j=1}^3 \nu_j A_j G_0(x,y),}
\end{equation}
where the differentiation is with respect to the variable $y$. So to conclude, it suffices to investigate the integral functions $\partial_j I(x,y) = \partial_j \big( I_0(x,y) + I_\infty(x,y) \big)$, $j = 1,2,3$, where $I_0(x,y)$ and $I_\infty(x,y)$ are the functions defined respectively by \eqref{eqa1} and \eqref{eqa2}. Firstly, an easy computation show that we have
\begin{equation}\label{eqa9}
\small{\partial_jI_\infty(x,y)} = (x_j - y_j) \mathcal{O}(1), \qquad j = 1, 2, 3.
\end{equation}
Secondly, by using for example the expression \eqref{eqa3} of $I_0(x,y)$, it can be checked that
\begin{equation}\label{eqa10}
\small{\frac{(4\pi)^\frac{3}{2}}{b^{\frac{1}{2}}} \partial_j I_0(x,y) = \frac{b(x_j - y_j)}{2}\int_1^{+\infty} v^\frac{1}{2} e^{ -\frac{b \vert x - y \vert^2 v}{4}} \frac{\coth (\frac{1}{v}) e^{ -\frac{b \vert x_\perp - y_\perp \vert^2}{4} \left[ \coth (\frac{1}{v}) - v  \right]}}{v^2 \sinh (\frac{1}{v})} dv, \quad j = 1, 2,}
\end{equation}
and 
\begin{equation}\label{eqa11}
\small{\frac{(4\pi)^\frac{3}{2}}{b^{\frac{1}{2}}} \partial_3 I_0(x,y) = \frac{b(x_3 - y_3)}{2}\int_1^{+\infty} v^\frac{1}{2} e^{ -\frac{b \vert x - y \vert^2 v}{4}} \frac{e^{ -\frac{b \vert x_\perp - y_\perp \vert^2}{4} \left[ \coth (\frac{1}{v}) - v  \right]}}{v \sinh (\frac{1}{v})} dv.}
\end{equation}
Similarly to the expansion \eqref{eqa6}, it can be proved that the functions 
$h(v) := \frac{\coth (\frac{1}{v}) e^{ -\frac{b \vert x_\perp - y_\perp \vert^2}{4} \left[ \coth (\frac{1}{v}) - v  \right]}}{v^2 \sinh (\frac{1}{v})}$ and 
$k(v) := \frac{e^{ -\frac{b \vert x_\perp - y_\perp \vert^2}{4} \left[ \coth (\frac{1}{v}) - v  \right]}}{v \sinh (\frac{1}{v})}$
appearing respectively in the integrals \eqref{eqa10} and \eqref{eqa11} satisfy for $\vert x - y \vert \ll 1$
\begin{equation}
\small{h(v) = 1 + \mathcal{O} \left( \frac{1}{v} \right), \quad k(v) = 1 + \mathcal{O} \left( \frac{1}{v} \right), \qquad v \geq 1.}
\end{equation}
This together with \eqref{eqa10} and \eqref{eqa11} give for $j = 1, 2, 3$ and $\vert x - y \vert \ll 1$
\begin{equation}
\small{\frac{(4\pi)^\frac{3}{2}}{b^{\frac{1}{2}}} \partial_j I_0(x,y) = \frac{b(x_j - y_j)}{2} \left[ \int_1^{+\infty} v^\frac{1}{2} e^{ -\frac{b \vert x - y \vert^2 v}{4}} dv +  \int_1^{+\infty} \frac{e^{ -\frac{b \vert x - y \vert^2 v}{4}}}{v^\frac{1}{2}} dv \mathcal{O}(1) \right].}
\end{equation}
After the change of variables $u = b \vert x - y \vert^2 v$, finally we get
\begin{equation}\label{eqa12}
\small{\frac{(4\pi)^\frac{3}{2}}{b^{\frac{1}{2}}} \partial_j I_0(x,y) = \frac{b(x_j - y_j)}{2} \left[ \frac{1}{b^\frac{3}{2} \vert x - y \vert^3} \int_{b \vert x - y \vert^2}^{+\infty} u^\frac{1}{2} e^{-\frac{u}{4}} du + \frac{1}{b^\frac{1}{2} \vert x - y \vert} \int_{b \vert x - y \vert^2}^{+\infty} \frac{e^{-\frac{u}{4}}}{u^\frac{1}{2}} du \mathcal{O}(1) \right].}
\end{equation}
Consequently, \eqref{eqK6} of point \textbf{(ii)} follows from \eqref{eqa8}, \eqref{eqa9} and \eqref{eqa12}. This concludes the proof of the Lemma \ref{lemKerbis}.

{\sl Acknowledgments.}

The authors are grateful to G. Raikov for his continued support and helpful exhange of views.
The first author thanks the Mittag-Leffler Institut where this work was initiate with useful discussions with M. Persson and with G. Rozenblum. 

  V. Bruneau was partially supported by ANR-08-BLAN-0228. 
D. Sambou is partially supported by the Chilean 
Program \textit{N\'ucleo Milenio de F\'isica Matem\'atica
RC$120002$}.

\bibliographystyle{amsplain}
\providecommand{\bysame}{\leavevmode\hbox to3em{\hrulefill}\thinspace}
\providecommand{\MR}{\relax\ifhmode\unskip\space\fi MR }
% \MRhref is called by the amsart/book/proc definition of \MR.
\providecommand{\MRhref}[2]{%
  \href{http://www.ams.org/mathscinet-getitem?mr=#1}{#2}
}
\providecommand{\href}[2]{#2}

\end{document}